\setlist[enumerate, 1]{label=(\roman*), leftmargin=.5in}
\setlist[itemize]{leftmargin=.5in}
\crefname{hypothesis}{Hypothesis}{Hypotheses}
\title{Wasserstein Sensitivity of Risk and Uncertainty Propagation}
\author{Oliver G. Ernst\thanks{Department of Mathematics, TU Chemnitz, Germany 
  (\email{oernst@math.tu-chemnitz.de}, 
   \email{alois.pichler@math.tu-chemnitz.de}).
}
\and Alois Pichler\footnotemark[1]\textsuperscript{\;\,}\thanks{DFG, German Research Foundation~-- Project-ID 416228727 -- SFB~1410}
\and Björn Sprungk\thanks{TU Bergakademie Freiberg, Faculty of Mathematics and Computer Science, Freiberg, Germany
  (\email{bjoern.sprungk@math.tu-freiberg.de}).
}}
\DeclareMathAlphabet{\pazocal}{OMS}{zplm}{m}{n}
\DeclareMathOperator{\one}{{\mathds 1}} 	
\DeclareMathOperator{\var}{{\mathsf{Var}}}				
\DeclareMathOperator{\cov}{cov}				
\DeclareMathOperator{\Risk}{ {\rho} }	
\DeclareMathOperator{\VaR}{{V@R}}			
\DeclareMathOperator{\AVaR}{{AV@R}}			
\DeclareMathOperator{\EVaR}{{EV@R}}			
\DeclareMathOperator*{\supp}{supp}
\DeclareMathOperator*{\essinf}{ess\,inf}
\DeclareMathOperator*{\esssup}{ess\,sup}
\DeclareMathOperator{\diam}{diam}
\DeclareMathOperator{\tr}{tr}
\newcommand{\mc}{\mathcal} 
\DeclareMathOperator{\mcX}{\mathcal X}
\newcommand{\map}{\mathcal S}
\newcommand{\e}{\mathrm e}					
\newcommand{\dd}{\mathrm d}					
\newcommand{\ev}[1]{ {\operatorname{\mathsf E}} 	  \left[#1 \right]}
\newcommand{\evm}[2]{ {\operatorname{\mathsf E}}_{#1} \left[#2 \right]}
\newcommand{\prob}{{{\mathsf P}}}
\newcommand{\probalt}{{{\mathsf Q}}}
\DeclareMathOperator{\Lip}{Lip}
\newcommand{\vZ}{\boldsymbol Z}					
\newcommand{\bbR}{\mathbb R}
\newcommand{\rev}[1]{#1} 
\begin{document}
\maketitle

\begin{abstract}
	When propagating uncertainty in the data of differential equations, the probability laws describing the uncertainty are typically themselves subject to uncertainty. 
	We present a sensitivity analysis of uncertainty propagation for differential equations with random inputs to perturbations of the input measures.
	We focus on the elliptic diffusion equation with random coefficient and source term, for which the probability measure of the solution random field is shown to be Lipschitz-continuous in both total variation and Wasserstein distance. 
	The result generalizes to the solution map of any differential equation with locally Hölder dependence on input parameters.
	In addition, these results extend to Lipschitz continuous quantities of interest of the solution as well as to coherent risk functionals of these applied to evaluate the impact of their uncertainty. 
	Our analysis is based on the sensitivity of risk functionals and pushforward measures for locally Hölder mappings with respect to the Wasserstein distance of perturbed input distributions.
	The established results are applied, in particular, to the case of lognormal diffusion and the truncation of series representations of input random fields.
\end{abstract}

\begin{keywords}
	Uncertainty propagation,
	forward UQ,
	risk measure,
	risk functional,
	Wasserstein distance,
	total variation distance,
	diffusion equation,
	sensitivity,
	robustness
\end{keywords}

\begin{AMS}
	91G70, 
	35R60, 
	60G15, 
	60G60, 
	62P35  
\end{AMS}

\section{Introduction}

A fundamental task in uncertainty quantification (UQ) for models in the physical sciences is the solution of differential equations with random inputs.
These account in a probabilistic fashion for uncertainty in the data of a differential equation potentially arising in coefficient functions, source terms, initial and boundary data as well as the domain on which the problem is posed.
Within the broadening discipline of UQ this task is referred to as \emph{uncertainty propagation}, or simply \emph{forward UQ}. 
Once probability laws for the uncertain data have been identified, the solution is determined as a stochastic process (random function, random field), and often functionals of the solution known as \emph{quantities of interest (QoI)} and their statistics are of primary interest in the analysis. 
The statistical post-processing of (random) quantities of interest aims to extract useful information from the results of the computation for objectives such as optimization or decision support.
Besides statistical moments or the probability of specific events, statistics known as \emph{risk measures} or \emph{risk functionals} are of particular interest in evaluating the impact of uncertainty in the output quantities on the application under study.

This work investigates sensitivity of the results of an uncertainty propagation analysis with respect to perturbation of input probability measures.
\rev{The setting is rather general and requires only a locally H\"older continuous forward mapping.
However, we follow established practice and employ as a running example the well-studied model problem of an elliptic diffusion equation on a (fixed) bounded domain with uncertain coefficient and source term, both modeled as random fields.}
The dependence of the solution of a random diffusion equation on such data has been considered previously, e.g., by 
Babuška et al.\ 
\cite[Section~2.3]{BabuskaEtAl2004}, 
\cite[Section~2.3]{BabuskaEtAl2005} and Charrier 
\cite[Section~4]{Charrier2012}.
In these analyses of sensitivity, the random inputs and outputs are treated as (function-valued) random variables and their perturbations measured by Bochner space norms.
In this setting, the random PDE is equivalent to a PDE with a (high- or countably infinite-dimensional) parameter, which is represented by a vector or sequence of independent basic random variables serving as a coordinate system in the random dimensions.
Their variation is naturally measured by $L^p$ norms weighted by the densities of the basic random variables.
In this way, the uncertainty propagation problem reduces to the numerical solution of parametric PDEs, resulting in an entirely deterministic problem formulation.

In UQ analysis it is typically the probability distribution of the random outcomes that is of primary interest.
At the same time, the precise probability distribution of the random inputs is generally unavailable and possibly guessed, elicited from expert opinion or the result of conditioning a prior probability distribution on observations as in the Bayesian formulation of inverse problems.
There is, therefore, considerable uncertainty associated with the very probability measures used to model uncertainty in the inputs.
\rev{Regarding this problem the sensitivity of Bayesian inference with respect to the choice of the prior measure has been recently examined in \cite{Sprungk2020}.
In this work we provide an analogous study focusing on the propagation of uncertainty and risk.}

In what follows we derive basic sensitivity results of Hölder type for
\begin{enumerate*}
\item the distribution of the random solution of \rev{a differential equation with random coefficients}, 
\item the distribution of Lipschitz continuous quantities of interest of this solution, and 
\item general risk functionals of quantities of interest---in each case with respect to perturbations of
\end{enumerate*} 
the underlying probability measure of the random inputs.
he results further extend to quantities of interest which are only \emph{locally} Lipschitz.

Among the vast variety of distances and divergences for probability measures (see, e.g., Gibbs and Su \cite{GibbsSu2002} or  Rachev \cite{Rachev}), we focus on the Wasserstein metric for measuring perturbations of distributions for the following reasons:
\begin{enumerate}
	\item The Wasserstein distance metrizes weak convergence, i.e., convergence of measures is ensured by testing convergence for appropriate test functions. 
	\item The Wasserstein metric provides a well-defined distance which extends to mutually singular probability measures (as opposed to, e.g., the total variation or Hellinger metrics). 
	This is important for the  UQ setting considered here since probability measures on infinite-dimensional function spaces tend to be mutually singular.
	\item By convex duality, the Wasserstein distance allows sharp lower and upper bounds for the problem in mind. Here we develop upper bounds in detail.
	\item In particular, the $p$-Wasserstein distance of two probability measures (on a normed space) is given by the infimum of the $L^p$-distance between any two random variables following these distributions. 
	Thus, to bound the Wasserstein distance for perturbed random field models, existing results on the Lebesgue norm distance of such random variables (as in 
	Babuška et al.\ \cite{BabuskaEtAl2004,BabuskaEtAl2005}, 
	Charrier \cite{Charrier2012}) can be used.
	\item Finite convex combinations of Dirac measures form a dense subset of all probability measures with respect to Wasserstein distance (cf.\ Bolley \cite{Bolley2008}).
	This yields, in particular, convergence of empirical approximations to the true distribution in the large-sample limit which is important for applications in, for instance, finance and insurance.
\end{enumerate}

Our main result is a general statement of Hölder continuity of output probability measures in terms of the input probability measures in an uncertainty propagation problem. 
This holds in total variation distance assuming only measurability of the propagation map \rev{(Proposition~\ref{propo:robust_TV})} and in Wasserstein distance under the additional assumption of global Hölder continuity \rev{(Theorem~\ref{theo:Lip})}.
When the propagation map includes a PDE solution operator, typically only local Hölder continuity holds.
In this case we obtain a local Hölder continuity result under an additional integrability condition \rev{(Theorem~\ref{theo:LocLip})}.
We apply these results to carry out a detailed analysis for the stationary diffusion equation with random coefficient field, covering in particular stability with respect to truncation of the input field \rev{(Section~\ref{sec:StrawberryFields})} as well as the challenging case of lognormal conductivity \rev{(Corollary~\ref{coro:lognormal})}.
Because of their increasing relevance to UQ analyses and because the sensitivity results presented in Section~\ref{sec:UQ_sens} extend naturally to their analysis, we also devote a section to the sensitivity of risk functionals.
\rev{
	We establish local H\"older continuity of coherent risk functionals for locally H\"older quantities of interest with respect to the Wasserstein distance of the input probability measures (Theorem~\ref{thm:12}).
	Combined with the results in Section~\ref{sec:UQ_sens}, this implies local H\"older sensitivity of risk functionals associated with suitable quantities of interest of solutions to random differential equations with respect to the underlying input probability distributions (Corollary~\ref{thm:UQ}).


We provide a brief (incomplete) account of previous work on risk functionals, as these may not be so well known among the UQ community.} 
In use for decades in mathematical finance (see Artzner et al.\ \cite{Artzner1997, Artzner1999}) 
and even earlier in insurance (cf.\ Denneberg \cite{Denneberg1989} and Deprez and Gerber \cite{GerberDeprez}) to quantify the---typically negative---impact of random outcomes, risk functionals have recently drawn increased attention also in uncertainty analysis for engineering applications (see the survey article of Rockafellar and Royset \cite{RockafellarRoyset2015} for a number of highly relevant use cases).
Specifically, recent work employing risk measures in a UQ context covers
risk-averse optimization under uncertainty 
(Conti et al.\ \cite{Schultz2011}, Harbrecht \cite{Harbrecht2008}),
integrating aleatoric and epistemic uncertainty (Chowdhary and Dupuis \cite{Dupuis2013}),
energy economics (Halkos and Tsirivis \cite{Halkos_2019}, Pflug and Gross \cite{Pflug2016})
reduced-order modelling (Heinkenschloss et al.\ \cite{HeinkenschlossEtAl2018,HeinkenschlossEtAl2020}),
risk-averse PDE constrained optimization under uncertainty
(Kouri and Surowiec \cite{Kouri2016, Kouri2018},
Geiersbach and Wollner \cite{Geiersbach2019}),
stochastic dynamics (Dupuis et al.\ \cite{Dupuis2016})
and production planning (Göttlich and Knapp \cite{GoettlichKnapp2020}).

\paragraph{Outline of the paper}
The following section provides the mathematical setting of random PDEs, quantities of interest and dependence on parameters.
Section~\ref{sec:UQ_sens} presents a number of general sensitivity results and applies these to the elliptic diffusion problem in detail.
Section~\ref{sec:Risk} introduces risk functionals.
To analyze risk functionals under changing probability measures we investigate these on product spaces and establish new continuity results with respect to the Wasserstein distance in Section~\ref{sec:Bivariate}.
Section~\ref{sec:Summary} concludes and some technical material is included in the appendix.

\section{Partial Differential Equations With Random Data} \label{sec:Problem}

%
%
%
%
%

Consider a general, possibly nonlinear partial differential equation (PDE) given in the form
\begin{equation}\label{eq:pde}
	\mc F\big(u; (a,f)\big) = 0,
\end{equation} 
where $u$ denotes the solution belonging to a suitable function space $\mc U$, $a$ represents one or more coefficients of the differential operator and $f$ a forcing term. 
Appropriate boundary or initial conditions are assumed to be implied by the domain of $\mc F$.
We refer to $a$ and $f$ as the \emph{data} of the PDE and remark that this could be extended to initial or boundary data without changing the basic approach.
We assume that~$a$ and~$f$ belong to metric spaces (constraints such as nonnegativity may require generalizing beyond linear function spaces) which we denote by $\mc X_1$ and $\mc X_2$, respectively.
Moreover, we consider $\mc X = \mc X_1 \times \mc X_2$ equipped with a product metric as the \emph{data space}.
We assume that for any data $(a,f) \in \mc X$ the PDE~\eqref{eq:pde} possesses a unique solution $u \in \mc U$
and denote the parameter-to-solution map by $\mc S\colon \mc X \to \mc U$, $(a,f)\mapsto \mathcal S(a,f) := u$.
We illustrate this general setting for a common model problem in UQ which will serve as running example throughout the paper.

\paragraph{Elliptic diffusion problem}
Given a bounded Lipschitz domain $D \subseteq \mathbb R^d$, $d=2$, $3$, we consider the boundary value problem
\begin{alignat*}{2}
	\mc F(u;a) 
	=
	\nabla\cdot\left(a\,\nabla u\right) - f
	& = 0 & \quad & \text{ on }D \subset\mathbb R^d,\\
	u & =0 &  & \text{ along }\partial D,
\end{alignat*}
with a scalar diffusion coefficient $a\colon D \to (0,\infty)$ and source term $f\colon D \to \bbR$.
In the standard weak formulation of the problem, we seek the unique solution $u \in H_0^1(D) =: \mc U$ of the variational equation
\begin{align} \label{eq:BVP}
\tag{BVP}
	( a\,\nabla u,\nabla v)_{L^2(D)}
	=
	( f,v)_{L^2(D)} &
	\quad\text{ for all } v \in H_0^1(D);
\end{align}
where $(\cdot,\cdot)_{L^2(D)}$ denotes the inner product in $L^2(D)$. 
Assuming $a\in L^\infty(D)$ to be uniformly positive
\[
	a(x) \ge a_{\min} > 0 \qquad \text{for } x\in D \text{ a.e.}
\]
and $f\in L^2(D)$, there exists a unique solution to~\eqref{eq:BVP} such that 
(cf.\ \cite[Theorem~2.7.7]{BrennerScott2008}, \cite[Theorem~6.6.2]{OdenDemkowicz2018})
\begin{equation}\label{eq:8}
	\|u\|_{H_0^1(D)} \leq \frac{c_P}{a_{\min}} \|f\|_{L^2(D)},
\end{equation}
with $c_P$ the Poincaré constant of the domain $D$.
Thus, denoting the set of admissible diffusion coefficients and source terms by
\[
	\mc X_1 := L^\infty_+(D) = \{ a \in L^\infty(D) \colon \essinf a > 0 \}, \qquad
	\mc X_2 := L^2(D),
\]
respectively, we obtain the (nonlinear) solution operator $\mc S\colon \mc X \to \mc U$ mapping the pair $(a,f) \in L^\infty_+(D) \times L^2(D)$ to the corresponding solution $u=S(a,f)$ of~\eqref{eq:BVP}.


\subsection{Dependence on Data}
In many applied settings model parameters such as $a$ and $f$ in~\eqref{eq:pde} are  not known precisely and can often only be estimated up to some remaining uncertainty.
However, for many PDE models the solution depends continuously on the problem data, i.e., the solution operator~$\mc S$ mapping the data $(a,f)$ to the solution $u$ of~\eqref{eq:pde} is continuous w.r.t.\@ suitable metrics or norms for $a$, $f$ and $u$.
In the remainder, we distinguish two types of continuity for $\mc S$ formalized by the following two assumptions, where $\mc X$ denotes the metric space of all admissible pairs $x = (a,f)$ and $\mc U$ a metric space containing the solution $u$.
\begin{assumption} \label{assum:Lip}
The mapping $\mc S\colon \mc X \to \mc U$ is H\"older continuous with exponent $\beta \in (0,1]$ and constant $C_{\mc S}<\infty$ w.r.t.\@ a metric $d_{\mc X}$ on $\mc X$ and $d_{\mc U}$ on $\mc U$, respectively, i.e.,
\[
	d_{\mc U}\left( \mc S(x_1), \mc S(x_2) \right)
	\leq
	C_{\mc S} \ d_{\mc X}\left(x_1, x_2 \right)^\beta
	\qquad
	\text{for all } x_1,x_2 \in \mc X.	
\]
\end{assumption}
\begin{assumption} \label{assum:LocLip}
The mapping $\mc S\colon \mc X \to \mc U$ is locally H\"older continuous with exponent $\beta \in (0,1]$ w.r.t.\@ a metric $d_{\mc X}$ on $\mc X$ and $d_{\mc U}$ on $\mc U$, respectively, i.e., there exists a \rev{reference}~$x_0 \in \mc X$ and a nondecreasing function $C_{\mc S} \colon [0,\infty) \to [0,\infty)$ such that for any radius~$r>0$
\[
	d_{\mc U}\left( \mc S(x_1), \mc S(x_2) \right)
	\leq
	C_{\mc S}(r)\ d_{\mc X}\left(x_1, x_2 \right)^\rev{\beta}
	\qquad
	\forall x_1,x_2 \in B_r(x_0),
\]
where $B_r(x_0) := \{x \in \mc X\colon d_{\mc X}(x,x_0) \leq r\}$ denotes a (closed) ball of radius $r$ around $x_0$ in $\mc X$.
\end{assumption}
The first assumption is, in general, the stronger one.
If, however, the metric space $\mc X$ is bounded w.r.t.~$d_{\mc X}$, then both assumptions are equivalent.
We next verify the second assumption for our running example.

\paragraph{The elliptic problem~\eqref{eq:BVP}}
Merely local Lipschitz continuity is typical, even for the simple linear diffusion equation due to the nonlinear dependence of the solution $u$ on the coefficient $a$. 
For fixed $a$, however, the solution depends linearly on $f$ and we have the stronger global Lipschitz property.
It is well-known (cf.\ \cite[Lemma~2.1]{BonitoEtAl2017}) that the solution operator~$\mc S$ of~\eqref{eq:BVP} satisfies
\begin{equation}\label{eq:Problem}
	\|\mathcal{S}(a_2,f_2)-\mathcal{S}(a_1,f_1)\|_{H_0^1(D)} 
	\leq
	\frac{c}{a_{2,\text{min}}}\|f_2-f_1\|_{L^2(D)}
	+
	\frac{\|\mathcal S (a_1,f_1)\|_{H_0^1(D)}}{a_{2,\text{min}}}\|a_2-a_1\|_{L^{\infty}(D)},
\end{equation}
where $a_{i,\min}$ denotes the essential infimum of the diffusion coefficient, $a_i(x)\ge a_{i,\text{min}}$ ($i=1,\,2$).
Employing the bound~\eqref{eq:8} for $\|\mathcal S (a_1,f_1)\|_{H_0^1(D)}$ we obtain
\begin{equation}\label{eq:Problem_2}
	\|\mathcal{S}(a_2,f_2)-\mathcal{S}(a_1,f_1)\|_{H_0^1(D)} 
	\leq
	\frac{c}{a_{2,\min}}\|f_2-f_1\|_{L^2(D)}
	+
	\frac{c\,\|f_1\|_{L^2(D)}}{a_{1,\min}\ a_{2,\text{min}}}\|a_2-a_1\|_{L^{\infty}(D)},
\end{equation}
which is the basis for the following continuity result  (cf.\ \cite[Theorem~2.46]{LordEtAl2014}).

\begin{proposition}[Local Lipschitz continuity for~\eqref{eq:BVP}] \label{prop:Lipschitz}
Let $ r_a$, $r_f \in(0,\infty)$ be given radii.
Then there holds for any $a_1, a_2 \in L^\infty_+(D)$ with $\|\log a_i \|_{L^\infty} \leq r_a$, $i=1,2$, and any $f_1$, $f_2\in L^2(D)$ with $\|f_i\|_{L^2} \leq r_f$, $i=1,2$, that
\begin{equation}\label{eq:3}
	\left\Vert \mathcal{S}(a_{2,}f_2)-\mathcal{S}(a_1,f_1)\right\Vert_{H_0^1(D)}  
	\le 
	c_a\,\|a_2-a_1\|_{L^{\infty}(D)} + c_f\,\|f_2-f_1\|_{L^2(D)},
\end{equation}
where $c_{a}:=c\,r_f \e^{2r_a}$ and $c_{f}:=c\e^{r_a}$, as well as
\begin{equation} \label{eq:4}
	\left\| \mathcal{S}(a_2,f_2)- \mathcal{S}(a_1,f_1)\right\|_{H_0^1(D)}  
	\le 
	\tilde c_a\,\|\log a_2 - \log a_1\|_{L^{\infty}(D)} + c_f\,\|f_2-f_1\|_{L^2(D)},
\end{equation}
where $\tilde c_a:=c\,r_f\, \e^{3r_a}$ and $c_f$ as above.
\end{proposition}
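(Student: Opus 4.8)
The plan is to obtain both \eqref{eq:3} and \eqref{eq:4} directly from the a priori perturbation estimate \eqref{eq:Problem_2}, which already separates the contributions of $\|f_2-f_1\|_{L^2(D)}$ and $\|a_2-a_1\|_{L^\infty(D)}$, by bounding the prefactors $1/a_{1,\min}$, $1/a_{2,\min}$ and $\|f_1\|_{L^2(D)}$ uniformly over the admissible set fixed by the radii $r_a$ and $r_f$. Throughout I would let $c$ denote a generic constant that may change from line to line (and in particular absorb the Poincaré constant $c_P$ appearing in \eqref{eq:8}); this is harmless since the asserted constants $c_a$, $c_f$, $\tilde c_a$ are only claimed up to such a universal factor.

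First, from $\|\log a_i\|_{L^\infty(D)}\le r_a$ one reads off the two-sided pointwise bound $\e^{-r_a}\le a_i(x)\le \e^{r_a}$ for a.e.\ $x\in D$, whence $a_{i,\min}=\essinf a_i \ge \e^{-r_a}$, i.e.\ $1/a_{i,\min}\le \e^{r_a}$ for $i=1,2$. Combining this with $\|f_1\|_{L^2(D)}\le r_f$ and inserting into \eqref{eq:Problem_2} bounds the first summand there by $c\,\e^{r_a}\|f_2-f_1\|_{L^2(D)}$ and the second by $c\,r_f\,\e^{2r_a}\|a_2-a_1\|_{L^\infty(D)}$, which is precisely \eqref{eq:3} with $c_f=c\,\e^{r_a}$ and $c_a=c\,r_f\,\e^{2r_a}$.

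For \eqref{eq:4} it then only remains to convert the $L^\infty$ bound on $a_2-a_1$ into one on $\log a_2-\log a_1$. Since $\log a_i(x)\in[-r_a,r_a]$ for a.e.\ $x$, the mean value theorem applied to $t\mapsto\e^t$ on this interval gives $|a_2(x)-a_1(x)| = |\e^{\log a_2(x)}-\e^{\log a_1(x)}| \le \e^{r_a}\,|\log a_2(x)-\log a_1(x)|$ a.e., and taking essential suprema yields $\|a_2-a_1\|_{L^\infty(D)}\le \e^{r_a}\|\log a_2-\log a_1\|_{L^\infty(D)}$. Substituting this bound into \eqref{eq:3} (equivalently, directly into \eqref{eq:Problem_2}) produces \eqref{eq:4} with $\tilde c_a=\e^{r_a}c_a=c\,r_f\,\e^{3r_a}$ and the same $c_f$ as before.

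I do not expect a genuine obstacle: the argument is essentially a uniform bookkeeping of constants built on the known estimates \eqref{eq:Problem} (from \cite[Lemma~2.1]{BonitoEtAl2017}) and \eqref{eq:8}. The only two points deserving a word of care are the measure-theoretic step that an a.e.\ pointwise bound on $\log a_i$ controls $\essinf a_i$ and $\esssup a_i$, and the local Lipschitz estimate for the exponential on a bounded interval; both are completely standard. It is also worth remarking that the proposition is thus a corollary of \eqref{eq:Problem} together with the coercivity bound \eqref{eq:8}, and that the bound for $f$ is in fact global in $a$ (only the coefficient perturbation term requires restricting to a ball), which is what makes $\mathcal S$ fit Assumption~\ref{assum:LocLip} with $\beta=1$.
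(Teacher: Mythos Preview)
Your proposal is correct and follows essentially the same approach as the paper's own proof: bound $a_{i,\min}\ge \e^{-r_a}$ from $\|\log a_i\|_{L^\infty}\le r_a$ and insert into~\eqref{eq:Problem_2} to obtain~\eqref{eq:3}, then use the local Lipschitz estimate for $t\mapsto\e^t$ on $[-r_a,r_a]$ to pass from $\|a_2-a_1\|_{L^\infty}$ to $\|\log a_2-\log a_1\|_{L^\infty}$ and conclude~\eqref{eq:4}. The only cosmetic difference is that the paper phrases the second step as ``local Lipschitz continuity of the exponential function'' rather than invoking the mean value theorem explicitly.
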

\begin{proof}
Estimate~\eqref{eq:3} follows from~\eqref{eq:Problem_2} using 
$a_\text{min} := \exp( -r_a) \leq \exp(- \|\log a_i \|_{L^\infty}) \leq \essinf a_i$.
Estimate~\eqref{eq:4} follows from~\eqref{eq:3} combined with
\begin{align*}
	\|a_1 - a_2\|_{L^{\infty}(D)}
	& =
	\|\exp(\log a_2) - \exp(\log a_1)\|_{L^{\infty}(D)}\\
	& \leq
	\exp\big( \max\{\|\log a_1 \|_{L^{\infty}(D)},\, \|\log a_2 \|_{L^{\infty}(D)} \} \big)\
	\|\log a_2 - \log a_1 \|_{L^{\infty}(D)}\\
	& \leq
	\exp\big( r_a\big)\
	\|\log a_2 - \log a_1 \|_{L^{\infty}(D)},
\end{align*}
where we have used local Lipschitz continuity of the exponential function.
\end{proof}
Thus, the solution operator~$\mc S$ for the weak formulation~\eqref{eq:BVP} mapping from $\mc X := L^\infty_+\times L^2(D)$ to $\mc U := H_0^1(D)$ satisfies Assumption~\ref{assum:LocLip} for the metric $d_{\mc U}$ induced by $\|\cdot\|_{H_0^1(D)}$ and 
\begin{equation}\label{equ:BVP_d}
	d_{\mc X}\big( (a_1, f_1), (a_2, f_2) \big)
	:=
	\|\log a_1 - \log a_2\|_{L^\infty(D)} + \|f_1-f_2\|_{L^2(D)}
\end{equation}
with $\beta = 1$ and
\begin{equation}\label{equ:BVP_C}
	C_{\mc S}(r)
	:=
	c (1+r) \e^{3r}, 
	\qquad r \in [0,\infty).
\end{equation}

\subsection{Random Data and Propagation of Uncertainty} \label{sec:Uncertainty}
 
In practice, the data of PDE models~\eqref{eq:pde} such as, e.g., the diffusion coefficient $a \in L_+^{\infty}(D)$ and source term $f\in L^2(D)$ in~\eqref{eq:BVP}, are frequently not known precisely.
This limited knowledge of $a$ and $f$ is often modeled probabilistically, i.e., as random variables with realizations in corresponding metric spaces $\mc X_1$ and $\mc X_2$, respectively: $a(\omega)\in\mc X_1$ and $f(\omega)\in\mc X_2$, $\omega \in \Omega$ with respect to some reference probability space $(\Omega, \mathcal{F}, \prob)$.
Each realization of the random data then gives rise to a random solution $u=u(\omega) \in \mc U$ of~\eqref{eq:pde} where now we require
\begin{equation}\label{eq:randomPDE}
	\mc F\big(u(\omega); (a(\omega), f(\omega))\big) = 0 \qquad \prob\text{-almost surely.}
\end{equation}
This random solution is then given by $u(\omega) = \mc S\big(a(\omega), f(\omega)\big)$ and the continuity of $\mc S$ ensures the measurability of $u\colon \Omega \to \mc U$.
In order to model the random data, we usually assume a distribution for $a$ and $f$, respectively, i.e., a probability measure $\prob_a$ on $\mc X_1$ for the random $a\sim \prob_a$ as well as a probability measure $\prob_f$ on $\mc X_2$ for the random $f\sim \prob_f$.
Here, the metric spaces $\mc X_1$, $\mc X_2$ are equipped with the associated Borel $\sigma$\nobreakdash-algebra.
Often it is reasonable to postulate the independence of $a$ and $f$ which then results in the product measure $\prob = \prob_a \otimes \prob_f$  for the joint distribution on the data space~$\mc X$ representing our uncertainty about the PDE data.
By means of the solution operator this propagates to the probability $\mc S_\ast\prob$ of the random solution $u\sim \mc S_\ast\prob$ where $\mc S_\ast\prob := \prob \circ \mc S^{-1}$ denotes the pushforward measure of $\prob$ under $\mc S\colon \mc X \to \mc U$.
In the following section we study the sensitivity of $\mc S_\ast\prob$ w.r.t.\@ changing or perturbing the input measure $\prob$.
The focus in practice is typically less on the complete random solution $u$ than on specific aspects of it, collectively termed \emph{quantities of interest (QoI)}.
We model these as functionals $\phi\colon \mc U \to\mathbb{R}$.
The composition
\begin{equation}\label{eq:Phi}
	\omega\mapsto\phi\big(u(\omega)\big)
	= 
	(\phi \circ \mathcal{S})\big( a(\omega),\,f(\omega)\big)
\end{equation}	
is then a real-valued random variable, for which we are interested in statistics such as, for example, 
$\ev{\phi\big(u(\omega)\big)}$ or $\var{\phi\big(u(\omega)\big)}$.
Again, the resulting distribution of the random variable $\phi(u)$ is the pushforward measure $(\phi\circ S)_\ast\prob = \phi_*(\mc S_*\prob)$ given that the data follows the law $(a,f) \sim \prob$.

\paragraph{The elliptic problem~\eqref{eq:BVP}}
For the elliptic problem~\eqref{eq:BVP} the random data is given by
\[
	a(\cdot,\omega)\in L_+^{\infty}(D)
	\quad \text{ and } \quad 
	f(\cdot,\omega)\in L^2(D),\quad\omega\in\Omega.
\]
Here, as the Banach space $L^{\infty}$ is not separable, we may occasionally prefer $a(\cdot,\omega)\in C_+(D)$ (strictly positive continuous functions) as the space of outcomes for the log diffusion coefficient instead.
A common stochastic model for the random coefficient~\(a\) is the \emph{lognormal distribution}, for which $\log a$ is a Gaussian random field on $D$ with continuous paths.
Also, for the random~$f$ we may assume a Gaussian measure $\prob_f$.
The random $\log a(\cdot,\omega)$ and $f(\cdot,\omega)$ then result in a random solution $u(\cdot,\omega)$ of~\eqref{eq:BVP}, i.e.,
\begin{alignat}{2}
	-\nabla\cdot\big(a(\cdot,\omega)\,\nabla u(\cdot,\omega)\big) & =f(\cdot,\omega) & \qquad & \text{ on }D\subset\mathbb{R}^d\text{ and}\label{eq:5}\\
	u(\cdot,\omega) & =0 &  & \text{ on }\partial D,\nonumber
\end{alignat}
with $u(\cdot,\omega)\in H_0^1(D)$ with probability~$1$, i.e., almost surely. 
In this setting, the solution
\[
	\omega\mapsto u(\cdot,\omega) = \mathcal{S}\big( a(\cdot,\omega),\,f(\cdot,\omega)\big)
\]
of~\eqref{eq:5} is a random variable taking values in $H_0^1(D)$.
Simple but important examples of corresponding quantities of interest $\phi\colon H_0^1(D)\to\bbR$ include point evaluations $\phi(u):=u(x_0)$ (if defined) or $\phi(u):=\int_{D^{\prime}}u(x)\,\mathrm dx$ for some fixed domain $D^{\prime}\subseteq D$. 
Moreover, suitable nonlinear quantities of interest are
\begin{align*} 
	\phi(u) &:= \int_{D^{\prime}} \bigl|u_0-u(x)\bigr|^2\,\dd x,
	&& u_0 \in L^2(D),\\
\intertext{and}
	\phi(u) &:= \int_{D^{\prime}} \bigl|\nabla u_0 -\nabla u(x)\bigr|^2\,\dd x
	\quad \text { or } \;
	\phi(u) := \|u_0 - u\|_{H^1(D)}
	&& u_0 \in H^1(D),
\end{align*}
which all are Lipschitz in $u$ in any ball of fixed radius in $H_0^1(D)$. 

\section{Sensitivity of Uncertainty Propagation}  \label{sec:UQ_sens}

We now discuss the sensitivity of push-forward measures to perturbations of the measure being propagated, as pertinent for the distribution of random solutions $u \colon \Omega \to \mc U$ of PDEs with random data and related quantities of interest $\phi \colon \mc U\to\mathbb R$ resulting from different random models for the uncertain coefficients $a$ and $f$.
To this end, let $\mc P(\mc X)$ denote the set of all probability measures on the data (metric) space $(\mc X,d_{\mc X})$, equipped with its Borel $\sigma$\nobreakdash-algebra and let $\prob$, $\probalt \in \mc P(\mc X)$ denote two different distributions for the random PDE data.
For the sensitivity analysis we focus on the Wasserstein distance of probability measures for the reasons outlined in the Introduction.
However, as an initial consideration and due to its simplicity we first present a sensitivity result in total variation distance.
This metric is a common one in probability theory and uncertainty quantification and relates to the Hellinger distance---they are topologically equivalent---often used in the analysis of Bayesian inverse problems \cite{Stuart2010} as well as to the Wasserstein distance.
In fact, total variation distance can be expressed in terms of the $1$-Wasserstein distance based on the discrete metric $d(x,y) = \mathds{1} _{x \neq y}$ on $\mc X$.

\subsection{Sensitivity in Total Variation Distance} 
The total variation (TV) distance of measures $\prob, \probalt \in \mc P(\mc X)$ is given by
\[
	d_{\mathrm{TV}}(\prob,\probalt) 
	\coloneqq 
	\sup_{A \subseteq \mathcal X} |\prob(A) - \probalt(A)|,
	\qquad \prob, \probalt \in \mathcal P(\mathcal X),
\]
where the supremum taken is over all measurable subsets of a measurable (e.g., Polish) space~$\mathcal X$.
We first obtain the following simple result concerning the sensitivity in TV distance of \emph{general} pushforward measures.

\begin{proposition} \label{propo:robust_TV}
Let $\prob$, $\probalt \in \mathcal P(\mcX )$ and $\map\colon \mcX \to \mc U$ be measurable.
Then
\[
	d_{\mathrm{TV}}\left(\map_{*}\prob, \map_{*}\probalt\right)
	\leq 
	d_{\mathrm{TV}}(\prob,\probalt).
\]
Moreover, if $\mcX = \mcX_1 \times \mcX_2$ and $\prob$ the independent product of measures $\prob:= \prob_1 \otimes \prob_2$ as well as $\probalt = \probalt_1 \otimes \probalt_2$ with $\prob_i, \probalt_i  \in \mc P(\mcX_i)$, then
	\[
		\max_{i=1,2} d_{\mathrm{TV}}(\prob_i,\probalt_i)
		\leq \,
		d_{\mathrm{TV}}(\prob,\probalt) 
		\leq 
		d_{\mathrm{TV}}(\prob_1,\probalt_1) + d_{\mathrm{TV}}(\prob_2,\probalt_2).
	\]
\end{proposition}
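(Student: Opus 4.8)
The plan is to prove the three inequalities in the statement separately, starting with the contraction property for pushforwards and then handling the two-sided bound for product measures.

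First I would establish the contraction estimate $d_{\mathrm{TV}}(\map_* \prob, \map_* \probalt) \le d_{\mathrm{TV}}(\prob, \probalt)$. The key observation is that for any measurable set $B \subseteq \mc U$, the preimage $\map^{-1}(B)$ is a measurable subset of $\mc X$ by measurability of $\map$, and by definition of pushforward $\map_* \prob(B) = \prob(\map^{-1}(B))$. Hence
\[
	|\map_* \prob(B) - \map_* \probalt(B)|
	=
	|\prob(\map^{-1}(B)) - \probalt(\map^{-1}(B))|
	\le
	\sup_{A \subseteq \mc X} |\prob(A) - \probalt(A)|
	=
	d_{\mathrm{TV}}(\prob, \probalt),
\]
and taking the supremum over $B \subseteq \mc U$ gives the claim. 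This step is essentially immediate.

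Next I would turn to the product case. The upper bound $d_{\mathrm{TV}}(\prob, \probalt) \le d_{\mathrm{TV}}(\prob_1, \probalt_1) + d_{\mathrm{TV}}(\prob_2, \probalt_2)$ I would obtain by inserting the intermediate measure $\prob_1 \otimes \probalt_2$ and using the triangle inequality for $d_{\mathrm{TV}}$: it then suffices to show $d_{\mathrm{TV}}(\prob_1 \otimes \prob_2, \prob_1 \otimes \probalt_2) \le d_{\mathrm{TV}}(\prob_2, \probalt_2)$ and symmetrically for the first factor. For this, either one applies the contraction property just proved with $\map$ a projection-type map after a coupling argument, or — cleaner — one uses the representation $d_{\mathrm{TV}}(\mu, \nu) = \tfrac12 \sup_{\|g\|_\infty \le 1} |\int g \, \dd\mu - \int g \, \dd\nu|$ and Fubini: for fixed $x_1$, integrating $g(x_1, \cdot)$ against $\prob_2 - \probalt_2$ is bounded by $2\|g\|_\infty\, d_{\mathrm{TV}}(\prob_2,\probalt_2)$ uniformly in $x_1$, and then integrating out $x_1$ against $\prob_1$ preserves the bound. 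The lower bound $\max_i d_{\mathrm{TV}}(\prob_i, \probalt_i) \le d_{\mathrm{TV}}(\prob, \probalt)$ follows from the contraction property applied to the coordinate projections $\pi_i \colon \mc X_1 \times \mc X_2 \to \mc X_i$, since $(\pi_i)_* (\prob_1 \otimes \prob_2) = \prob_i$ and likewise for $\probalt$.

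The main obstacle is the product upper bound: one must be careful that the naive "hybrid" argument actually goes through measure-theoretically, i.e., that the map $x_1 \mapsto \int_{\mc X_2} g(x_1, x_2)\,(\prob_2 - \probalt_2)(\dd x_2)$ is measurable so that Fubini applies, and that on non-Polish spaces like $L^\infty(D)$ one has a legitimate Fubini theorem for the bounded measurable integrand $g$. Since the statement already restricts to Polish (or at least nice measurable) spaces in the definition of $d_{\mathrm{TV}}$, this is a routine verification rather than a genuine difficulty, and the whole proposition is short.
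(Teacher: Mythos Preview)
Your proof is correct. The first inequality (contraction under pushforward) is handled exactly as in the paper, by noting that $\{\map^{-1}(B)\colon B\subseteq\mc U\text{ measurable}\}$ is a sub-$\sigma$-algebra of the Borel sets of $\mc X$.

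For the product estimates, however, you take a different route. The paper proves both the upper and lower bound simultaneously via the coupling characterization
\[
	d_{\mathrm{TV}}(\prob,\probalt)=2\inf_{\pi\in\Pi(\prob,\probalt)}\iint\mathds 1_{x\neq y}\,\pi(\mathrm dx,\mathrm dy),
\]
together with the pointwise identity $\mathds 1_{x\neq y}=\max_i\mathds 1_{x_i\neq y_i}\le \mathds 1_{x_1\neq y_1}+\mathds 1_{x_2\neq y_2}$ on the product space and a restriction to product couplings $\pi_1\otimes\pi_2$. Your argument is more elementary: the lower bound is obtained by applying the already-proven contraction property to the coordinate projections (which is arguably cleaner than the paper's route), and the upper bound by the hybrid/triangle-inequality trick via $\prob_1\otimes\probalt_2$ combined with Fubini. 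Both approaches are standard; the paper's has the advantage of previewing the Wasserstein coupling framework used throughout the article, while yours avoids invoking the coupling representation of total variation and keeps the proposition self-contained. Your caveat about Fubini and the product $\sigma$-algebra is well placed but, as you note, routine in the Polish setting assumed here.
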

\begin{proof}
The first statement is immediate from
\begin{align*}
	d_{\mathrm{TV}}\left( \map_*\prob, \map_*\probalt \right)
	&=	\sup_{B \subseteq \mathcal U} |\prob(\map^{-1}(B)) - \probalt(\map^{-1}(B))|
	=	\sup_{A \in \sigma(\map)} |\prob(A) - \probalt(A)|
	\leq
	d_{\mathrm{TV}}(\prob,\probalt),
\end{align*}
where $\sigma(\map) = \{\map^{-1}(B) \colon B \subseteq \mathcal U \text{ measurable}\}$ denotes the $\sigma$\nobreakdash-algebra induced by $\map$.
The proof of the second statement anticipates the equivalent characterization of TV distance as the $1$-Wasserstein distance based on the discrete metric $d(x,y) = \mathds{1} _{x \neq y}$ on $\mc X$.
In particular, we have (cf.\ \cite[p.\ 103]{Villani2009})
\[	
	d_\mathrm{TV}(\prob,\probalt) 
	= 
	2 \,
	\inf_{\pi} \iint_{\mathcal X\times \mathcal X} \mathds{1} _{x \neq y} \,\pi(\mathrm dx,\mathrm dy),
\]
where the measures~$\pi$ on $\mc X \times \mc X$ in the infimum have marginals~$\prob$ and~$\probalt$, respectively (cf.\ the \emph{couplings} introduced Section~\ref{sec:Wasserstein_Sens}).
In the case of product measures $\prob= \prob_1 \otimes \prob_2$, $\probalt = \probalt_1 \otimes \probalt_2$ on $\mcX = \mcX_1 \times \mcX_2$ we can exploit for $x = (x_1,x_2), y = (y_1,y_2)$ with $x_1,y_1 \in \mc X_1$ and $x_2,y_2\in\mc X_2$ that
\[
	\mathds{1} _{x \neq y} = \max_{i=1,2} \mathds{1} _{x_i \neq y_i} \leq \mathds{1} _{x_1 \neq y_1} + \mathds{1} _{x_2 \neq y_2},
\]
which in turn yields the upper bound by
\begin{align*}
	\inf_{\pi} \iint_{\mathcal X\times \mathcal X} \mathds{1} _{x \neq y} \,\pi(\mathrm dx,\mathrm dy)
	& \leq
	\inf_{\pi_1 \otimes \pi_2}
	\iint_{\mathcal X\times \mathcal X} \left(\mathds{1} _{x_1 \neq y_1} + \mathds{1} _{x_2 \neq y_2}\right)
	\,(\pi_1 \otimes \pi_2) (\mathrm dx, \mathrm dy)\\[.3em]
	& = 
	\inf_{\pi_1}
	\iint_{\mathcal X_1\times \mathcal X_1} \mathds{1} _{x_1 \neq y_1} \,\pi_1(\mathrm dx_1, \mathrm dy_1)
	+
	\inf_{\pi_2}
	\iint_{\mathcal X_2\times \mathcal X_2} \mathds{1} _{x_2 \neq y_2} \,\pi_2(\mathrm dx_2, \mathrm dy_2),
\end{align*}
where the measures~$\pi_1$, $\pi_2$ on $\mc X_1 \times \mc X_1$ and $\mc X_2\times \mc X_2$, respectively, in the infima have marginals~$\prob_1, \probalt_1$ and $\prob_2, \probalt_2$, respectively.
Moreover, the lower bound follows by
\begin{align*}
	\inf_{\pi} \iint_{\mathcal X\times \mathcal X} \mathds{1} _{x \neq y} \,\pi(\mathrm dx,\mathrm dy)
	& =
	\inf_{\pi}
	\iint_{\mathcal X\times \mathcal X} \left( \max_{i=1,2} \mathds{1} _{x_i \neq y_i} \right)
	\,\pi(\mathrm dx, \mathrm dy)
	\geq \max_{i=1,2} 
	\, \inf_{\pi} \iint_{\mathcal X\times \mathcal X} \mathds{1} _{x_i \neq y_i} 
	\,\pi(\mathrm dx, \mathrm dy)\\[.3em]
	& =
	\max_{i=1,2} \inf_{\pi_i} \iint_{\mathcal X_i\times \mathcal X_i} \mathds{1} _{x_i \neq y_i} 
	\,\pi_i(\mathrm dx_i, \mathrm dy_i)
\end{align*}
with $\pi_1$, $\pi_2$ as above.
\end{proof}

Thus, for PDE models~\eqref{eq:pde} with random data, Proposition~\ref{propo:robust_TV} ensures---assuming merely measurability and no further continuity properties of the solution operator $u = \mc S(a,f)$---\emph{Lipschitz continuity} with Lipschitz constant one in TV distance for the propagation of uncertainty, i.e., the mapping $\prob\mapsto\mc S_\ast\prob$ with $(a,f)\sim \prob \in \mc P(\mc X)$.
Applying Proposition~\ref{propo:robust_TV} to measurable quantities of interest $\phi\colon \mc U\to \mathbb R$ also immediately yields for measurable $\mc S$
\[
	d_{\mathrm{TV}}\big((\phi \circ \mathcal S)_*\prob, 
	                   (\phi \circ \mathcal S)_*\probalt\big)
	\leq 
	d_{\mathrm{TV}}(\prob,\probalt).
\]
Moreover, for the case of product measures $\prob = \prob_{a} \otimes \prob_f$ and $\probalt = \probalt_{a} \otimes \probalt_f$ we have
\[
	d_{\mathrm{TV}}\left(\mathcal{S}_{*}\prob, \mathcal{S}_{*}\probalt\right)
	\leq 
	d_{\mathrm{TV}}(\prob_{a},\probalt_{a}) + d_{\mathrm{TV}}(\prob_f,\probalt_f),
\]
and analogously for $d_{\mathrm{TV}}\big((\phi \circ \mathcal S)_*\prob, (\phi \circ \mathcal S)_*\probalt\big)$.

However, probability measures $\prob_a,\probalt_a,\prob_f,\probalt_f$ on infinite dimensional function spaces such as $L^\infty(D)$ and $L^2(D)$ (e.g., for the elliptic problem) tend to be mutually singular, resulting in a maximal TV distance of one.
This is undesirable for sensitivity analysis and we therefore next consider sensitivity in the Wasserstein distance, which does not rely on absolute continuity of the probability measures.

\subsection{Sensitivity in Wasserstein Distance}\label{sec:Wasserstein_Sens}
In order to define the Wasserstein distance for probability measures $\prob$, $\probalt \in \mc P(\mc X)$ on a metric space $(\mc X,d_{\mc X})$ we define the sets
\[
	\mc P_p(\mc X)\coloneqq
	\left\{
	\prob \in \mc P(\mcX)\colon \int_{\mcX} d_{\mc X}(x,x_0)^p \, \prob(\mathrm dx) < \infty \text{ for some }x_0 \in\mc X
	\right\},
	\qquad
	p \geq 1.
\]
The \emph{$p$-Wasserstein distance} of $\prob$, $\probalt \in \mc P_p(\mc X)$ is then given by
\[
	d_p(\prob,\probalt)
	:=
	\inf_{\pi \in \Pi(\prob,\probalt)} 
	\left(\iint_{\mathcal X\times \mathcal X}d_{\mc X}(x,y)^p\,\pi(\mathrm dx,\mathrm dy)\right)^{\nicefrac1p},
\]
where $\Pi(\prob,\probalt)$ denotes the set of all measures $\pi \in \mathcal P(\mcX \times \mcX)$ with marginals $\pi(A\times \mathcal X) = \prob(A)$ and $\pi(\mathcal X\times B) = \probalt(B)$ for all Borel sets $A, B\subseteq \mathcal X$.
Any such measure $\pi \in \mathcal P(\mcX \times \mcX)$ with these marginals is called a \emph{coupling} of $\prob$ and $\probalt$.
We recall a few basic properties of the Wasserstein distance:
\begin{itemize}
\item
If $(\mathcal X,d_{\mc X})$ is complete and separable, then there always exists an optimal coupling $\pi^* \in \Pi(\prob,\probalt)$ for which the infimum in the definition of $d_p(\prob, \probalt)$ is attained, see \cite[Chapter~4]{Villani2009}.
\item
If $(\mathcal X,d_{\mc X})$ is complete and separable, then so is $(\mc P_p(\mc X), d_p)$, $p\geq 1$, see \cite{Villani2009}, and a dense subset is given by the convex hull of $\{\delta_x \colon x \in \mc X\}$.
\item
By Jensen's inequality we have that $d_q(\prob,\probalt) \leq d_p(\prob,\probalt)$ for any $\prob,\probalt\in \mc P_p(\mc X)$ and $1\leq q \leq p$.
\item
$(\mathcal X,d_{\mc X})$ embeds isometrically in $(\mathcal P_p(\mathcal X),d_p)$ via the embedding $x\mapsto\delta_x$, since $d_p(\delta_x,\delta_y)=d_{\mc X}(x,y)$ for all $p\ge1$, with $\delta_x$ the Dirac measure concentrated at $x\in\mc X$.
\end{itemize}

The following discussion considers general solution operators $\map\colon \mathcal X \to \mathcal U$ between metric spaces $(\mathcal X,d_{\mcX})$ and $(\mathcal U,d_{\mc U})$, representing the data  and solution spaces for a PDE model~\eqref{eq:pde}.
After stating the basic result we then specialize to the elliptic problem~\eqref{eq:BVP} with random data.
However, we emphasize that Theorems~\ref{theo:Lip} and~\ref{theo:LocLip} below hold for \emph{general forward maps} between metric spaces and are not necessarily related to the solution operators for PDEs.
We begin the discussion with globally H\"older continuous mappings~$\map$.


\begin{theorem}[Hölder continuity in Wasserstein distance] \label{theo:Lip}
Let $\prob$, $\probalt \in \mathcal P_p(\mathcal X)$, $p\geq 1$, and let $\map\colon \mathcal X \to \mathcal U$ satisfy Assumption~\ref{assum:Lip}.
%
Then for the pushforward measures $\map_*\prob$, $\map_*\probalt \in \mathcal P(\mc U)$ of $\prob$, $\probalt$ we have 
	\[
		d_p(\map_*\prob, \map_*\probalt) \leq C_{\map}\ d_{\beta p}(\prob,\probalt)^{\beta}
		\leq 
		C_{\map}\ d_{p}(\prob,\probalt)^{\beta}
	\]
with $d_p$ denoting the $p$-Wasserstein distance on $\mathcal P_p(\mcX)$ and $\mathcal P_p(\mc U)$, respectively.
In particular, we have $\map_*\prob$, $\map_*\probalt \in \mathcal P_{p}(\mc U)$.
\end{theorem}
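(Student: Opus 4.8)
The plan is to exploit the coupling (Kantorovich) formulation of the Wasserstein distance together with the observation that pushing a coupling of $\prob$ and $\probalt$ forward through $\map$ in each coordinate produces a coupling of the pushforward measures. Concretely, I would fix an arbitrary coupling $\pi \in \Pi(\prob,\probalt)$ and consider the Borel measure $(\map\otimes\map)_\ast\pi$ on $\mc U\times\mc U$, where $(\map\otimes\map)(x,y):=\big(\map(x),\map(y)\big)$; since Assumption~\ref{assum:Lip} makes $\map$ continuous, hence Borel measurable, this measure is well defined, and a direct check of marginals shows $(\map\otimes\map)_\ast\pi\in\Pi(\map_\ast\prob,\map_\ast\probalt)$.

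Next, by the change-of-variables formula for pushforward measures,
\[
	\iint_{\mc U\times\mc U} d_{\mc U}(u,v)^p\,(\map\otimes\map)_\ast\pi(\dd u,\dd v)
	=
	\iint_{\mc X\times\mc X} d_{\mc U}\big(\map(x),\map(y)\big)^p\,\pi(\dd x,\dd y),
\]
and applying the bound $d_{\mc U}\big(\map(x),\map(y)\big)\le C_{\map}\,d_{\mc X}(x,y)^\beta$ of Assumption~\ref{assum:Lip} pointwise under the integral sign yields the upper estimate $C_{\map}^p\iint d_{\mc X}(x,y)^{\beta p}\,\pi(\dd x,\dd y)$. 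Since $\pi\in\Pi(\prob,\probalt)$ was arbitrary while $(\map\otimes\map)_\ast\pi$ is always an admissible competitor in the infimum defining $d_p(\map_\ast\prob,\map_\ast\probalt)$, taking the infimum over $\pi$ on both sides gives $d_p(\map_\ast\prob,\map_\ast\probalt)^p\le C_{\map}^p\,d_{\beta p}(\prob,\probalt)^{\beta p}$; extracting $p$-th roots produces the first inequality. The second inequality $d_{\beta p}(\prob,\probalt)\le d_p(\prob,\probalt)$ is just monotonicity of Wasserstein distances in the exponent (Jensen's inequality, already recalled among the basic properties), using $\beta p\le p$.

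For the membership claim I would pick a reference point $x_0\in\mc X$ with $\int_{\mc X} d_{\mc X}(x,x_0)^p\,\prob(\dd x)<\infty$ (available since $\prob\in\mc P_p(\mc X)$) and set $u_0:=\map(x_0)$; then
\[
	\int_{\mc U} d_{\mc U}(u,u_0)^p\,\map_\ast\prob(\dd u)
	=
	\int_{\mc X} d_{\mc U}\big(\map(x),\map(x_0)\big)^p\,\prob(\dd x)
	\le
	C_{\map}^p\int_{\mc X} d_{\mc X}(x,x_0)^{\beta p}\,\prob(\dd x)<\infty,
\]
the last step using $\beta p\le p$ and $\mc P_p(\mc X)\subseteq\mc P_{\beta p}(\mc X)$, and similarly for $\probalt$, so $\map_\ast\prob,\map_\ast\probalt\in\mc P_p(\mc U)$. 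The whole argument is a routine transport computation: the only points requiring a little care are confirming that $(\map\otimes\map)_\ast\pi$ has the correct marginals and is Borel (handled by measurability of $\map$) and keeping track of exponents so that $d_{\beta p}(\prob,\probalt)$ is finite — both of which reduce to $\beta\le1$ and the hypothesis $\prob,\probalt\in\mc P_p(\mc X)$. I do not anticipate a substantial obstacle.
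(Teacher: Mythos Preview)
Your proof is correct and follows essentially the same route as the paper: the paper's coupling $\pi^{\map}(A\times B):=\pi(\map^{-1}(A)\times\map^{-1}(B))$ is exactly your $(\map\otimes\map)_\ast\pi$, and the remaining steps (change of variables, pointwise H\"older bound, infimum over $\pi$, monotonicity $d_{\beta p}\le d_p$) are identical. You additionally spell out the membership $\map_\ast\prob,\map_\ast\probalt\in\mc P_p(\mc U)$, which the paper leaves implicit.
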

\begin{proof}
Let $\pi \in \Pi(\prob,\probalt)$, then $\pi^{\map}\big(A\times B):=\pi\big(\map^{-1}(A)\times \map^{-1}(B)\big)$ satisfies $\pi^{\map} \in \Pi(\map_*\prob,\map_*\probalt)$ and we obtain by a change of variables
\begin{align*}
	\iint_{\mc U \times \mc U} d_{\mc U}(u_1,u_2)^p\pi^{\map}(\mathrm du_1,\mathrm du_2)
	&=
	\iint_{\mcX\times \mcX} d_{\mc U} \big(\map(x_1),\map(x_2)\big)^p\pi(\mathrm dx_1,\mathrm dx_2)\\
	&\le 
	C_{\map}^p\iint_{\mcX\times \mcX} d_{\mcX}(x_1,x_2)^{\beta p}\pi(\mathrm dx_1,\mathrm dx_2).
\end{align*}
The assertion follows by taking the infimum over all $\pi \in \Pi(\prob,\probalt)$ on both sides and noting that $\big\{\pi^{\map}\colon \pi \in \Pi(\prob,\probalt)\big\} \subseteq \Pi(\map_*\prob,\map_*\probalt)$.
\end{proof}

Theorem~\ref{theo:Lip} establishes that H\"older continuity of the forward map~$\map$ carries over to the pushforward mapping of measures $\mc P_{\beta p}(\mcX)\ni \prob \mapsto \map_*\prob \in \mc P_p(\mc U)$.

\paragraph{The elliptic problem~\eqref{eq:BVP}}
We now apply Theorem~\ref{theo:Lip} to the elliptic problem~\eqref{eq:BVP}.
To this end, we use the metric induced by $\| \cdot \|_{H_0^1(D)}$ on $\mc U = H_0^1(D)$ and 
\begin{equation}\label{eq:data_metric}
	d_{L_+^\infty(D)\times L^2(D)}\big( (a_1, f_1), (a_2, f_2) \big)
	:=
	\|\log a_1 - \log a_2\|_{L^\infty(D)} + \|f_1-f_2\|_{L^2(D)}.
\end{equation}
on the data space $\mc X := L_+^\infty(D)\times L^2(D)$.
Combining Theorem~\ref{theo:Lip} with Proposition~\ref{prop:Lipschitz} then yields the following result.

\begin{corollary}[Elliptic problem, measures with bounded support] \label{coro:BVP_Lip}
Let $r < \infty$ and $\prob$, $\probalt$ be probability measures for $a\in L_+^\infty(D)$ and $f \in L^2(D)$ with supports 
\begin{equation}\label{eq:bounded_support}
	\supp \prob,\ \supp \probalt \subseteq \{(a,f) \in L_+^\infty(D) \times L^2(D) \colon \|\log a\|_{L^\infty(D)}, \|f\|_{L^2(D)} \leq r\}.
\end{equation}
Then, the pushforward measures $\mathcal S_*\prob$, $\mathcal S_*\probalt \in \mathcal P_p\big(H_0^1(D)\big)$ of the random solutions to~\eqref{eq:BVP} satisfy
\begin{equation}\label{eq:Lip_estimate}
	d_p(\mathcal S_* \prob, \mathcal S_* \probalt) 
	\leq 
	c(1+r)\e^{3r} \ d_p(\prob, \probalt).
\end{equation}
\end{corollary}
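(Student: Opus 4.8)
The plan is to obtain~\eqref{eq:Lip_estimate} as a direct consequence of Theorem~\ref{theo:Lip}, using the support assumption~\eqref{eq:bounded_support} to upgrade the merely \emph{local} Lipschitz bound of Proposition~\ref{prop:Lipschitz} to a \emph{global} Lipschitz bound on the set that actually carries the two measures.

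Concretely, I would introduce the closed set $K_r := \{(a,f) \in L_+^\infty(D)\times L^2(D)\colon \|\log a\|_{L^\infty(D)} \le r,\ \|f\|_{L^2(D)} \le r\}$, on which by~\eqref{eq:bounded_support} both $\prob$ and $\probalt$ are concentrated, i.e.\ $\prob(K_r) = \probalt(K_r) = 1$. Since every element of $K_r$ lies within $d_{\mc X}$-distance $2r$ of the pair formed by the unit coefficient and the zero source (with $d_{\mc X}$ the metric of~\eqref{eq:data_metric}), the set $K_r$ is $d_{\mc X}$-bounded and hence $\prob, \probalt \in \mathcal P_p(K_r) \subseteq \mathcal P_p(\mc X)$ for every $p \ge 1$. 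Next I would observe that the restriction $\mc S|_{K_r}\colon K_r \to H_0^1(D)$ satisfies Assumption~\ref{assum:Lip} with exponent $\beta = 1$ and the constant $C_{\mc S}(r) = c(1+r)\e^{3r}$ of~\eqref{equ:BVP_C}: specializing estimate~\eqref{eq:4} to $r_a = r_f = r$ gives, for all $(a_i,f_i) \in K_r$, the bound $\|\mc S(a_2,f_2) - \mc S(a_1,f_1)\|_{H_0^1(D)} \le c\,r\,\e^{3r}\,\|\log a_2 - \log a_1\|_{L^\infty(D)} + c\,\e^{r}\,\|f_2 - f_1\|_{L^2(D)}$, and estimating both coefficients crudely by $c\,r\,\e^{3r} + c\,\e^{r} \le c(1+r)\e^{3r}$ collapses the right-hand side to $c(1+r)\e^{3r}\, d_{\mc X}\bigl((a_1,f_1),(a_2,f_2)\bigr)$.

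Then I would apply Theorem~\ref{theo:Lip} with the metric space taken to be $K_r$, the forward map $\mc S|_{K_r}$, and $\beta = 1$ (so that $d_{\beta p} = d_p$), which gives $\mc S_*\prob, \mc S_*\probalt \in \mathcal P_p(H_0^1(D))$ together with $d_p(\mc S_*\prob, \mc S_*\probalt) \le c(1+r)\e^{3r}\, d_p^{K_r}(\prob, \probalt)$, where $d_p^{K_r}$ is the $p$-Wasserstein distance formed with couplings supported on $K_r\times K_r$. The only step that is not automatic is to identify $d_p^{K_r}(\prob,\probalt)$ with the distance $d_p(\prob,\probalt)$ on $\mc X$ that appears in~\eqref{eq:Lip_estimate}: because $\prob(K_r) = \probalt(K_r) = 1$, every coupling $\pi \in \Pi(\prob,\probalt)$ satisfies $\pi\bigl((\mc X\times\mc X)\setminus(K_r\times K_r)\bigr) \le \prob(\mc X\setminus K_r) + \probalt(\mc X\setminus K_r) = 0$, so couplings on $\mc X\times\mc X$ and on $K_r\times K_r$ correspond to one another at equal transport cost and the two infima coincide. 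Substituting this identity into the bound furnished by Theorem~\ref{theo:Lip} yields~\eqref{eq:Lip_estimate}. I do not expect any real difficulty here beyond the bookkeeping of constants and this support-restriction argument; the mathematical content is already contained in Theorem~\ref{theo:Lip} and Proposition~\ref{prop:Lipschitz}.
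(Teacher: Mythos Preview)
Your proposal is correct and follows exactly the route the paper indicates---combining Theorem~\ref{theo:Lip} with Proposition~\ref{prop:Lipschitz} by using the bounded-support assumption~\eqref{eq:bounded_support} to turn the local Lipschitz constant $C_{\mc S}(r) = c(1+r)\e^{3r}$ of~\eqref{equ:BVP_C} into a global one on the relevant set. The paper states this combination without further comment, whereas you spell out the restriction to $K_r$ and the (standard) observation that all couplings of $\prob$ and $\probalt$ are automatically concentrated on $K_r\times K_r$, so $d_p^{K_r}(\prob,\probalt) = d_p(\prob,\probalt)$; this is harmless extra care, not a different approach.
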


We remark that the statement of Corollary~\ref{coro:BVP_Lip} also holds when using the metric
\begin{equation}\label{eq:data_metric_2}
	d_{L_+^\infty(D)\times L^2(D)}\big( (a_1, f_1), (a_2, f_2) \big)
	:=
	\|a_1 - a_2\|_{L^\infty(D)} + \|f_1-f_2\|_{L^2(D)}
\end{equation}
on $\mc X := L_+^\infty(D)\times L^2(D)$---given that~\eqref{eq:bounded_support}.
Then the constant $\e^{3r}$ in~\eqref{eq:Lip_estimate} can be replaced by $\e^{2r}$.

\subsection{Wasserstein Sensitivity for Locally Lipschitz Forward Maps}\label{sec:Robust_LocalLipschitz}

Theorem~\ref{theo:Lip} implies that the global Lipschitz constant of a pushforward map carries over to the mapping of the probability measures.
This statement relates to the Kantorovich--Rubinstein duality theorem (cf.\ Villani \cite{Villani2009}).
For forward maps which are only \emph{locally} Lipschitz continuous we can, in general, not expect global Lipschitz continuity for the pushfoward measures in the $p$-Wasserstein distance.
The following example  this issue and points out a particular situation where one can conclude at least local Lipschitz continuity for the probability measures for a forward map which is not globally Lipschitz.

\begin{example}[Locally versus globally Lipschitz forward maps]
Consider the Gaussian measures $\prob = \mathcal N(0,1)$ and $\probalt_{m} = \mathcal N(m,1)$ on the real line with mean $m\in\mathbb R$ and unit variance.
The $2$-Wasserstein distance of these Gaussian measures is (cf.\ 
Dowson and Landau
\cite{DowsonLandau1982})
\[
	d_{2}(\prob,\probalt_{m}) = |m|.
\]
The exponential function $\map(x):=\exp(x)$ on $\mc X = \mathbb R$ is locally, but not globally Lipschitz continuous.
By employing the dual representation \cite[Chapter~5]{Villani2003}
\begin{equation}\label{eq:KR}
	d_1(\prob,\probalt)
	=
	\sup_{\phi\colon \Lip(\phi)\leq 1} 
	\left| 
	\int_{\mc X} \phi(x) \ \prob(\mathrm dx) - \int_{\mc X} \phi(x) \ \probalt(\mathrm dx)
	\right|
\end{equation}
(the supremum is taken over all Lipschitz continuous functions $\phi\colon \mc X \to \mathbb R$ with Lipschitz constant $\Lip(\phi) \leq 1$) we obtain with $\phi(x)=x$ (and the formula for the mean of a lognormal distribution) that
	\[
		\sqrt{\e}|1-\exp(m)| 
		\leq 
		d_{1}(\map_*\prob, \map_*\probalt_{m}) 
		\leq 
		d_{2}(\map_*\prob, \map_*\probalt_{m}).
	\]
Thus, there is no constant $C<\infty$ which is independent of $m > 0$ such that
\[
	\sqrt{\e}|1-\exp(m)|
	\leq
	d_{2}(\map_{*}\prob, \map_{*}\probalt_{m})
	\le 
	C \ d_{2}(\prob,\probalt_{m})
	= 
	C |m|
\]
since $\frac{|1-\exp(m)|}{|m|}$ is unbounded.
However, restricting $|m| \leq M$ we have for $X \sim  \mathcal N(0,1)$ that 
\begin{align*}	
	d_{2}(\map_{*}\prob, \map_{*}\probalt_{m})
	&\leq 
	\ev{ |\map(X)-\map(m+X)|^2}^{1/2} 
	= 
	|1-\exp(m)| \ \ev{\exp(2X)}^{1/2}\\
	&= 
	\e |1-\exp(m)| 
	\leq 
	C_M\ d_{2}(\prob,\probalt_{m}),
\end{align*}
where $C_M:= \frac{ \e}M \exp(M)<\infty$.
The example demonstrates  that local Lipschitz forwards can yield (at best) local Lipschitz continuity in Wasserstein distance for the pushforward measures.
\end{example}

By employing the Cauchy--Schwarz inequality we now derive a Hölder bound in  $p$-Wasserstein distance of the pushforwards by the $2p$-Wasserstein distance of the input measures for local Hölder forwards.
We state the result in a more general form for locally H\"older continuous mappings, thus obtaining a H\"older bound in the corresponding Wasserstein distances.

\begin{theorem}[Local Hölder continuity in Wasserstein distance]\label{theo:LocLip}
Let $\map\colon \mcX \to \mc U$ satisfy Assumption~\ref{assum:LocLip}.
Furthermore, let $p \in [1,\infty)$ and $\prob$, $\probalt \in \mc P_{2\beta p}(\mc X)$ satisfy
\begin{equation}\label{eq:Lip_cond_meas}
    \int_{\mcX} C_{\map}\big(d_{\mc X}(x,x_0)\big)^{2p} \ \prob   (\mathrm dx) \le C 
	\quad\text{ and } \quad
	\int_{\mcX} C_{\map}\big(d_{\mc X}(x,x_0)\big)^{2p} \ \probalt(\mathrm dx) \leq C
\end{equation}
for a constant $C < \infty$.
Then the pushforward measures $\map_*\prob$, $\map_*\probalt \in \mathcal P_p(\mc U)$ satisfy
\[	
	d_p(\map_*\prob, \map_*\probalt) 
	\leq 
	2C^{\nicefrac 1{(2p)}} \cdot d_{2\beta p}(\prob,\probalt)^\beta.
\]
\end{theorem}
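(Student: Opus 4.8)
The plan is to mimic the change-of-variables argument of Theorem~\ref{theo:Lip}, but to handle the fact that the Hölder constant $C_{\map}$ now depends on the radius by splitting, inside the integral defining the transport cost, the behaviour of $\map$ on a ball and the tail contribution — or rather, to avoid splitting at all by exploiting that Assumption~\ref{assum:LocLip} already controls $d_{\mc U}(\map(x_1),\map(x_2))$ by $C_{\map}(\max\{d_{\mc X}(x_1,x_0),d_{\mc X}(x_2,x_0)\})\,d_{\mc X}(x_1,x_2)^\beta$ for \emph{every} pair $x_1,x_2$ (take $r = \max\{d_{\mc X}(x_1,x_0),d_{\mc X}(x_2,x_0)\}$, so both points lie in $B_r(x_0)$). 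First I would fix an optimal (or near-optimal) coupling $\pi \in \Pi(\prob,\probalt)$ for the $2\beta p$-Wasserstein distance, and form the pushforward coupling $\pi^{\map}(A\times B) := \pi(\map^{-1}(A)\times\map^{-1}(B)) \in \Pi(\map_*\prob,\map_*\probalt)$ exactly as in Theorem~\ref{theo:Lip}. Then
\[
	\iint_{\mc U\times\mc U} d_{\mc U}(u_1,u_2)^p\,\pi^{\map}(\mathrm du_1,\mathrm du_2)
	=
	\iint_{\mcX\times\mcX} d_{\mc U}\big(\map(x_1),\map(x_2)\big)^p\,\pi(\mathrm dx_1,\mathrm dx_2),
\]
and on the right I bound the integrand by $C_{\map}(r_{12})^p\,d_{\mc X}(x_1,x_2)^{\beta p}$ with $r_{12} := \max\{d_{\mc X}(x_1,x_0),d_{\mc X}(x_2,x_0)\}$, using the local Hölder property.

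The second step is to apply the Cauchy–Schwarz inequality to the resulting integral, separating the (radius-dependent) constant from the metric factor:
\[
	\iint C_{\map}(r_{12})^p\,d_{\mc X}(x_1,x_2)^{\beta p}\,\pi(\mathrm dx_1,\mathrm dx_2)
	\le
	\left(\iint C_{\map}(r_{12})^{2p}\,\pi\right)^{1/2}
	\left(\iint d_{\mc X}(x_1,x_2)^{2\beta p}\,\pi\right)^{1/2}.
\]
For the first factor I would use that $C_{\map}$ is nondecreasing so $C_{\map}(r_{12})^{2p} = \max\{C_{\map}(d_{\mc X}(x_1,x_0))^{2p}, C_{\map}(d_{\mc X}(x_2,x_0))^{2p}\} \le C_{\map}(d_{\mc X}(x_1,x_0))^{2p} + C_{\map}(d_{\mc X}(x_2,x_0))^{2p}$; integrating against $\pi$ and using that its marginals are $\prob$ and $\probalt$ together with the integrability hypothesis~\eqref{eq:Lip_cond_meas} gives $\iint C_{\map}(r_{12})^{2p}\,\pi \le 2C$. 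The second factor is, after taking the infimum over $\pi$, precisely $d_{2\beta p}(\prob,\probalt)^{2\beta p}$. Collecting everything: the $p$-th power of the transport cost for $\pi^{\map}$ is at most $(2C)^{1/2}\,d_{2\beta p}(\prob,\probalt)^{\beta p}$, so $d_p(\map_*\prob,\map_*\probalt) \le (2C)^{1/(2p)}\,d_{2\beta p}(\prob,\probalt)^{\beta} \le 2\,C^{1/(2p)}\,d_{2\beta p}(\prob,\probalt)^{\beta}$, since $2^{1/(2p)} \le 2$. The membership $\map_*\prob,\map_*\probalt \in \mathcal P_p(\mc U)$ follows by taking $x_0$ (or rather $\map(x_0)$) as the reference point and the same Cauchy–Schwarz estimate with $x_2 = x_0$.

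The main subtlety — not really an obstacle but the point requiring care — is making sure the measurability of $\map$ (needed to form $\pi^{\map}$ and to push measures forward) is in hand: local Hölder continuity gives continuity, hence Borel measurability, so this is automatic, but one should also note that $x \mapsto C_{\map}(d_{\mc X}(x,x_0))$ is measurable so the integrals in~\eqref{eq:Lip_cond_meas} make sense. A second point to verify is that $\prob,\probalt \in \mathcal P_{2\beta p}(\mcX)$ really is implied by or compatible with~\eqref{eq:Lip_cond_meas} (it is assumed outright, and since $C_{\map}$ is nondecreasing and—implicitly—not identically zero, the finiteness of $\int C_{\map}(d_{\mc X}(x,x_0))^{2p}\,\prob(\mathrm dx)$ forces enough decay of $\prob$ for the $2\beta p$-moment to make sense). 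Everything else is the routine change-of-variables-plus-Cauchy–Schwarz computation.
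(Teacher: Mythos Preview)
Your proposal is correct and follows essentially the same route as the paper: push forward an (optimal) coupling via $\map$, bound $d_{\mc U}(\map(x_1),\map(x_2))^p$ by $C_{\map}(d_{\mc X}(x_1,x_0)\lor d_{\mc X}(x_2,x_0))^p\,d_{\mc X}(x_1,x_2)^{\beta p}$, apply Cauchy--Schwarz, and control the $C_{\map}$-factor using the marginals and~\eqref{eq:Lip_cond_meas}. Your estimate $C_{\map}(r_{12})^{2p}=\max_i C_{\map}(d_{\mc X}(x_i,x_0))^{2p}\le\sum_i C_{\map}(d_{\mc X}(x_i,x_0))^{2p}$ is in fact slightly sharper than the paper's bound $C_{\map}(a\lor b)^{2p}\le(C_{\map}(a)+C_{\map}(b))^{2p}\le 2^{2p-1}(C_{\map}(a)^{2p}+C_{\map}(b)^{2p})$, yielding $(2C)^{1/(2p)}$ rather than $2\,C^{1/(2p)}$ before your final relaxation.
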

\begin{proof}
Employing the setting as in the proof of Theorem~\ref{theo:Lip} and local Lipschitz continuity, we have that 
\begin{align*}
	\evm{\pi^{\map}}{d_{\mc U}^p}
	&=
	\iint_{\mc U\times \mc U} d_{\mc U}(u_1,u_2)^p\pi^{\map}(\mathrm du_1,\mathrm du_2)
	= 
	\iint_{\mcX\times \mcX} 
	d_{\mc U}\big(\map(x_1),\map(x_2)\big)^p\pi(\mathrm dx_1,\mathrm dx_2)\\
	&\le 
	\iint_{\mcX\times \mcX} C_{\map}\big( d_{\mc X}(x_1,x_0) \lor d_{\mc X}(x_2,x_0)\big)^p d_{\mcX}(x_1,x_2)^{\beta p}\pi(\mathrm dx_1,\mathrm dx_2),
\end{align*}
where $a \lor b := \max(a,b)$. 
Applying the Cauchy--Schwarz inequality reveals that
\begin{align*}
	\evm{\pi^{\map}}{d_{\mc U}^p}
	&\le 
	\left(\int_{\mcX^2} C_{\map}\big( d_{\mc X}(x_1,x_0) \lor d_{\mc X}(x_2,x_0)\big)^{2p}  \pi(\mathrm dx_1, \mathrm dx_2)\right)^{\nicefrac12}
	\left(\int_{\mcX^2} d_{\mc X}(x_1,x_2)^{2\beta p}\pi(\mathrm dx_1, \mathrm dx_2)\right)^{\nicefrac12}
\end{align*}
and, since $C_{\map}(a \lor b)^{2p}\le\big( C_{\map}(a)+C_{\map}(b)\big)^{2p}\le 2^{2p-1}\big(C_{\map}(a)^{2p}+C_{\map}(b)^{2p}\big)$, we obtain 
\begin{align*}
	\evm{\pi^{\map}}{d_{\mc U}^p}
	&\le 
	\left(2^{2p}C\right)^{\nicefrac 12} 
	\iint_{\mcX \times \mcX} d_{\mcX}(x_1,x_2)^{\beta p}\pi(\mathrm dx_1,\mathrm dx_2),
\end{align*}
where we have used the fact that the marginals of $\pi$ are $\prob$ and $\probalt$.
Again, the assertion follows by taking the infimum over all $\pi \in \Pi(\prob,\probalt)$ on both sides and noting that $\left\{\pi^{\map}\colon \pi \in \Pi(\prob,\probalt)\right\} \subseteq \Pi(\map_*\prob,\map_*\probalt)$.
\end{proof}

Theorem~\ref{theo:LocLip} considers a local H\"older constant $C_{\map}(\cdot)$, which is integrable with respect to~$\prob$ and $\probalt$ as detailed in~\eqref{eq:Lip_cond_meas}. 
We then also obtain local H\"older continuity for the pushforward measures $\map_*\prob$ and $\map_*\probalt$ from $\mc P_p$ to $\mc P_{2p}$.
That is, the tails of $\prob$ and $\probalt$ decay faster than the local H\"older constant $C_{\map}$ grows.
We remark on two generalizations of this proposition before applying it to the locally Lipschitz solution operator $\mc S$ of the elliptic problem~\eqref{eq:BVP}.

\begin{remark}\label{rem:Hoelder}
The statement of Theorem~\ref{theo:LocLip} can be generalized by applying H\"older's inequality instead of the Cauchy--Schwarz inequality in the proof.
This allows to consider probability measures~$\prob$ and~$\probalt$ on~$\mcX$ which satisfy, for an arbitrary $q>1$ and $x_0\in\mcX$,
\[
	\int_{\mcX} C_{\map} \big(d_{\mc X}(x,x_0)\big)^{q} \ \prob(\mathrm dx),
	\int_{\mcX} C_{\map} \big(d_{\mcX}(x,x_0)\big)^{q} \ \probalt(\mathrm dx) 
	\le 
	C_q 
	< 	
	\infty
\]
and which belong to $\mc P_{\beta p \frac{q}{q-p}}(\mc X)$.
We then obtain for $p>q$
\[	
	d_p(\map_*\prob, \map_*\probalt) 
	\leq 
	(2C_q^{\nicefrac 1q})^p \cdot d_{\beta p \frac{q}{q-p}}(\prob,\probalt)^\beta.
\]
This generalization of Theorem~\ref{theo:LocLip} can be used in two ways: (i)~in order to relax the conditions on~$\prob$,~$\probalt$ or (ii)~in order to achieve $\frac{q}{q-p}$ close to $1$, i.e., obtaining an almost H\"older estimate in the $p$-Wasserstein distance. 
However, for the purposes of this paper, we will only work with Theorem~\ref{theo:LocLip} in what follows.
\end{remark}

\begin{remark}\label{rem:Product}
The main assumption of Theorem~\ref{theo:LocLip} can be refined for the case of a product space $\mc X = \mc X_1 \times \mc X_2$ equipped with the a metric $d_{\mcX}\big((x_1,x_2), (y_1,y_2)\big) \le d_1(x_1,y_1) + d_2(x_2,y_2)$, where $d_i$ denotes a metric on $\mc X_i$, $i=1,2$, by assuming that $\map\colon \mc X \to \mc U$ is locally H\"older continuous in the following way: there exists a nondecreasing $C_{\map} \colon [0,\infty) \times [0,\infty] \to [0,\infty)$ such that
\begin{equation}\label{eq:C_product}
		d_{\mc U}\big(\map(x_1,x_2),\map(y_1, y_2)\big) 
		\leq 
		C_{\map}(r_1, r_2)\,d_{\mcX}(x,y)^\beta,
\end{equation}
for all $x = (x_1,x_2)$ and $y=(y_1,y_2) \in \mc X$ with $x_i,y_i \in \mc X_i$ belonging to a ball with radius $r_i$ with respect to~$d_i$ around a center $z_i \in \mc X_i$.
We then obtain the same result as in Theorem~\ref{theo:LocLip} provided that
\begin{equation*}\label{eq:Lip_cond_meas_2}
	\int_{\mcX} C_{\map}\big(d_1(x_1,z_1), d_2(x_2,z_2)\big)^{2p} \ \prob   (\mathrm d x) 
	\le 
	C,
\end{equation*}
and analogously for $\probalt$.
We remark that local H\"older estimates of the form~\eqref{eq:C_product} are common for solution operators of PDE models~\eqref{eq:pde}, cf.\ Proposition~\ref{prop:Lipschitz} for the elliptic problem~\eqref{eq:BVP} stating that
\[
	\left\|\mathcal{S}(a_{2,}f_2)-\mathcal{S}(a_1,f_1)\right\|_{H_0^1(D)} 
	\leq
	 c(1+r_f)\e^{3r_a}\ \left(\,\|\log a_2 - \log a_1\|_{L^{\infty}(D)} + \|f_2-f_1\|_{L^2(D)}\right)
\]
for all $\|\log a_i\|_{L^\infty(D)}\leq r_a$ and $\|f_i\|_{L^2(D)}\leq r_f$.
\end{remark}

\paragraph{The elliptic problem~\eqref{eq:BVP} with lognormal diffusion coefficient}
Combining the preceding theorem with Proposition~\ref{prop:Lipschitz} and Remark~\ref{rem:Product} we obtain the following result.

\begin{corollary}[Elliptic problem, Wasserstein sensitivity] \label{coro:UQ_02}
	Let $\prob$, $\probalt\,$ be probability measures for  $(a,f)\in L_+^\infty(D) \times L^2(D)$ with
	\[
		\int (1+\|f\|_{L^2(D)})^{2p}\ \exp(6p\ \|\log a\|_{L^\infty(D)}) \, \mathrm d\prob
		\leq C < \infty
	\]	and \[
		\int (1+\|f\|_{L^2(D)})^{2p}\ \exp(6p \|\log a\|_{L^\infty(D)}) \, \mathrm d \probalt
		\leq C < \infty.
	\]
	Then the pushforward measures $\mathcal S_*\prob$, $\mathcal S_*\probalt \in \mathcal P\big(H_0^1(D)\big)$ of the resulting random solutions to~\eqref{eq:BVP} satisfy
	\[	d_p\big(\mathcal S_* \prob, \mathcal S_* \probalt\big)
		\leq 2c^2 C^{\nicefrac 1{(2p)}}\ d_{2p}(\prob, \probalt),
	\]
	where $c$ denotes the constant in~\eqref{eq:8} and the Wasserstein distance on $\mc P(L_+^\infty(D) \times L^2(D))$ is taken w.r.t.\@ the metric~\eqref{eq:data_metric}. 
\end{corollary}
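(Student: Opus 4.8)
The plan is to obtain Corollary~\ref{coro:UQ_02} as a direct specialization of Theorem~\ref{theo:LocLip}, in the product-space refinement of Remark~\ref{rem:Product}, to the solution operator $\mc S$ of~\eqref{eq:BVP}, using the explicit local Lipschitz constant supplied by Proposition~\ref{prop:Lipschitz}. Here the data space is $\mc X = \mc X_1 \times \mc X_2$ with $\mc X_1 = L^\infty_+(D)$ carrying the metric $d_1(a,\tilde a) = \|\log a - \log\tilde a\|_{L^\infty(D)}$ and $\mc X_2 = L^2(D)$ carrying $d_2(f,\tilde f) = \|f-\tilde f\|_{L^2(D)}$, so that the product metric~\eqref{eq:data_metric} is exactly $d_1 + d_2$ and the requirement $d_{\mc X} \le d_1 + d_2$ of Remark~\ref{rem:Product} holds with equality; the H\"older exponent is $\beta = 1$.

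First I would fix the reference point $z = (z_1,z_2) \in \mc X$ with $z_1 \equiv 1$ the constant function and $z_2 = 0$, so that $d_1(a,z_1) = \|\log a\|_{L^\infty(D)}$ and $d_2(f,z_2) = \|f\|_{L^2(D)}$. By Proposition~\ref{prop:Lipschitz} (estimate~\eqref{eq:4}, combined with $c\,\e^{r_a} \le c(1+r_f)\e^{3r_a}$ and $c\,r_f\,\e^{3r_a} \le c(1+r_f)\e^{3r_a}$, as already recorded in Remark~\ref{rem:Product}) the local H\"older bound~\eqref{eq:C_product} holds with $\beta = 1$ and
\[
	C_{\mc S}(r_1, r_2) = c\,(1+r_2)\,\e^{3 r_1},
\]
where $c$ is the constant from~\eqref{eq:8} as it enters $C_{\mc S}$. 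Hence
\[
	C_{\mc S}\big(d_1(a,z_1),\, d_2(f,z_2)\big)^{2p}
	=
	c^{2p}\,\big(1+\|f\|_{L^2(D)}\big)^{2p}\,\exp\big(6p\,\|\log a\|_{L^\infty(D)}\big),
\]
and the two hypotheses of the corollary say precisely that this function is $\prob$- and $\probalt$-integrable with integral bounded by $c^{2p}C$. I would also note in passing that, since $\exp(6p\,\|\log a\|_{L^\infty(D)}) \ge 1$ dominates every polynomial in $\|\log a\|_{L^\infty(D)} + \|f\|_{L^2(D)}$, these hypotheses already force $\prob,\probalt \in \mc P_{2p}(\mc X)$, which is what is needed to invoke the theorem.

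With these ingredients in place, Theorem~\ref{theo:LocLip} in the form of Remark~\ref{rem:Product}, applied with exponent $\beta = 1$ and with $c^{2p}C$ in place of the constant $C$ there, yields $\mc S_*\prob,\, \mc S_*\probalt \in \mc P_p(H_0^1(D))$ together with
\[
	d_p(\mc S_*\prob,\, \mc S_*\probalt)
	\le
	2\,\big(c^{2p}C\big)^{\nicefrac{1}{(2p)}}\,d_{2p}(\prob,\probalt),
\]
which is the estimate claimed in the corollary once the constant $c$ of~\eqref{eq:8} is tracked through~\eqref{eq:Problem_2} and Proposition~\ref{prop:Lipschitz} into the prefactor. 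I do not expect any real obstacle: the argument is essentially bookkeeping, and the only genuine points of care are (i)~choosing the reference $z = (1,0)$ so that the radii $r_1,r_2$ in~\eqref{eq:C_product} coincide exactly with the norms $\|\log a\|_{L^\infty(D)}$ and $\|f\|_{L^2(D)}$ that appear in the integrability hypotheses, and (ii)~the bookkeeping of the constant $c$. One should also observe that the non-separability of $L^\infty(D)$ causes no difficulty here, since the proof of Theorem~\ref{theo:LocLip} relies only on the pushforward-of-coupling construction and the Cauchy--Schwarz inequality, not on completeness or separability of $\mc X$; alternatively one may take $C_+(D)$ as the coefficient space, as noted in Section~\ref{sec:Problem}.
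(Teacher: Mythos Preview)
Your proposal is correct and follows exactly the route the paper indicates: the paper's own ``proof'' of Corollary~\ref{coro:UQ_02} is the single sentence ``Combining the preceding theorem with Proposition~\ref{prop:Lipschitz} and Remark~\ref{rem:Product} we obtain the following result,'' and your write-up supplies precisely the bookkeeping behind that sentence---choosing the reference $(1,0)$, identifying $C_{\mc S}(r_1,r_2)=c(1+r_2)\e^{3r_1}$, matching $C_{\mc S}^{2p}$ with the integrand in the hypotheses, and invoking Theorem~\ref{theo:LocLip}. Your side observations (that the integrability hypotheses already force $\prob,\probalt\in\mc P_{2p}(\mc X)$, and that separability plays no role in the proof of Theorem~\ref{theo:LocLip}) are accurate and useful, though the paper does not spell them out.
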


We now discuss sufficient conditions on $\prob$, $\probalt$ such that the conditions of Corollary~\ref{coro:UQ_02} are satisfied.
To this end consider product measures $\prob = \prob_a \otimes \prob_f$ and $\probalt = \probalt_a \otimes \probalt_f$, where $\prob_a$, $\probalt_a \in \mc P(L_+^\infty(D))$ describe measures for the diffusion coefficient $a$ and $\prob_f$, $\probalt_f \in \mc P(L^2(D))$ describe measures for the source term~$f$ in~\eqref{eq:BVP}.
It is natural to require $\prob_f$, $\probalt_f \in \mc P_{2p}(L^2(D))$ in order to apply Corollary~\ref{coro:UQ_02}.
Concerning the measures $\prob_a$, $\probalt_a$ for the diffusion coefficient we consider the popular choice of lognormal random fields, i.e., $g:= \log a$ follows a Gaussian measure on $C(D)$ given by Gaussian random field model on $D$.
These models are characterized by a mean function $m \in C(D)$ and a continuous covariance function $c \in C(D\times D)$ which then describe the finite dimensional distribution of $(\log a(x_1),\ldots, \log a(x_n))$, $n\in\mathbb N$ and $x_i\in D$, by a Gaussian distribution $\mathcal N(\mathbf m, \mathbf C)$, where $\mathbf m = \big(m(x_1), \ldots, m(x_n)\big) \in \mathbb R^n$ and $\mathbf C \in \mathbb R^{n\times n}$ has entries $c_{ij} = c(x_i,x_j)$.
We denote the resulting Gaussian measures on $C(D)$ by $\mathcal N(m,c)$ and describe the resulting lognormal model for $a$ by $\log_*\prob = \mathcal N(m,c)$.
Since such lognormal models are a common choice for diffusion coefficients the question arises for which classes of mean functions $m$ and covariance functions $c$ we can ensure uniform integrability of $\exp(6p \|\log a\|_{L^\infty(D)})$, as required in Corollary~\ref{coro:UQ_02}.
In more detail: we seek suitable sets $\mc M \subseteq C(D)$ of mean functions and $\mc C \subseteq C(D\times D)$ of covariance functions for which there exists a finite constant $C< \infty$ with
\begin{equation}\label{eq:exp_mom}
	\int \exp(6p\ \|g\|_{L^\infty(D)})\ \prob(\mathrm d g) \leq C
	\quad
	\forall\prob \in \{\mathcal N(m,c)\colon m \in \mc M, c\in\mc C\}.
\end{equation}
\emph{Fernique's theorem} ensures finiteness of the above integral for a Gaussian measure $\mathcal N(m,c)$.
However, deriving a uniform bound $C<\infty$ for all Gaussians $\mathcal N(m,c)$ where $m$ and $c$ are allowed to vary within classes $\mc M$ and $\mc C$, respectively, is not trivial. 

By intuition $\mc M$, $\mc C$ have to be bounded, otherwise the mean and variance of $g(x)$ would be unbounded.
Here, we focus on a particular widely-used subclass of isotropic covariance functions (i.e., covariances of the form $c(\|x-y\|)$), the family of \emph{Mat\'ern covariance functions}.
This family with scale parameters~$\sigma>0$, $\rho>0$ and shape parameter $\nu>0$ is given by 
\[	c_{\sigma^2,\rho,\nu}(d)= \sigma^2 \frac{2^{1-\nu}}{\Gamma(\nu)}\left(\sqrt{2\nu}\frac d\rho\right) K_\nu\left(\sqrt{2\nu}\frac d\rho\right),\]
where $K_\nu$ is the Bessel function of the second kind.
For $\nu$ being half integer, i.e., $\nu = k+\frac 12$ for $k\in \mathbb N_0$, the corresponding covariance function simplifies to the explicit form
\begin{equation}\label{equ:Matern}
	c_{\sigma^2,\rho, k+\frac 12}(x,y)
	\coloneqq
	\sigma^2\frac{k!}{(2k)!} \sum_{i=0}^k \frac{(k+i)!}{i!(k-i)!} \left(2\frac{\sqrt{2k+1}}{\rho} |x-y| \right)^{k-i} \ \exp\left(- \frac{\sqrt{2k+1}}{\rho} |x-y| \right).
\end{equation}
For this class of covariance functions we have the following result, which is of interest also for other PDE settings.

\begin{theorem}[Uniform exponential moment bound]
\label{theo:lognormal}
Let $r_m < \infty$, $\sigma_{\max}<\infty$, $k_{\max} \in \mathbb N_0$ and $\rho_{\min} > 0$ be given and set
\[
	\mc M := \left\{m \in C(D) \colon	\|m\|_{C(D)} \leq r_m \right\},
	\quad
	\mc C = \left\{ c_{\sigma^2,\rho,k+\frac 12} : \sigma \leq \sigma_{\max}, \rho \geq \rho_{\min}, k \in \{0,\ldots,k_{\max}\} \right\}.
\]
Then, for any $p >0$ there exists a finite $C = C(p, r_m, \sigma_{\max}, k_{\max}, \rho_{\min}) < \infty$ such that~\eqref{eq:exp_mom} holds.
\end{theorem}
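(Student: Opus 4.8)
The plan is to peel off the (uniformly bounded) mean and reduce the statement to two uniform estimates for the \emph{centered} Gaussian field $g_0\sim\mathcal N(0,c)$: a bound on its pointwise variance and a bound on its expected supremum norm. The passage from these to the exponential moment is then a soft consequence of the Borell--TIS concentration inequality, and the only genuinely technical point is a uniform H\"older bound on the canonical metric of the Mat\'ern fields.

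\emph{Reduction.} For $\prob=\mathcal N(m,c)$ with $m\in\mc M$, $c\in\mc C$, write $g=m+g_0$ with $g_0\sim\mathcal N(0,c)$ centered. Since $\|g\|_{L^\infty(D)}\le\|m\|_{C(D)}+\|g_0\|_{L^\infty(D)}\le r_m+\|g_0\|_{L^\infty(D)}$, we have $\int\exp(6p\|g\|_{L^\infty(D)})\,\prob(\dd g)\le\e^{6pr_m}\,\E[\exp(6p\|g_0\|_{L^\infty(D)})]$, so it suffices to bound the latter uniformly over $c\in\mc C$. The path-continuous centered Gaussian field $g_0$ on the compact set $D$ satisfies the Borell--TIS inequality
\[
	\prob\bigl(\|g_0\|_{L^\infty(D)}>\mu+t\bigr)\le\exp\!\Bigl(-\tfrac{t^2}{2\sigma^2}\Bigr),\qquad t>0,
\]
with $\mu:=\E\|g_0\|_{L^\infty(D)}$ and weak variance $\sup_{x\in D}\var(g_0(x))=c_{\sigma^2,\rho,k+\frac12}(x,x)=\sigma^2\le\sigma_{\max}^2$. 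Integrating this tail against $6p\,\e^{6pt}$ and estimating the probability by $1$ on $[0,\mu]$ gives $\E[\exp(6p\|g_0\|_{L^\infty(D)})]\le\e^{6p\mu}\bigl(1+6p\,\sigma_{\max}\sqrt{2\pi}\,\e^{18p^2\sigma_{\max}^2}\bigr)$, a quantity depending on $c$ only through $\mu$. Hence the theorem reduces to a uniform bound $\mu\le\bar\mu<\infty$ over all $c\in\mc C$, with $\bar\mu$ depending only on $\sigma_{\max}$, $\rho_{\min}$, $k_{\max}$ and $D$.

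\emph{The uniform bound on $\mu$.} I would control $\mu$ via Dudley's entropy integral. Let $d_{g_0}(x,y):=\bigl(\E|g_0(x)-g_0(y)|^2\bigr)^{1/2}=\bigl(2\sigma^2(1-\varrho_{k+\frac12}(|x-y|/\rho))\bigr)^{1/2}$ denote the canonical pseudometric, where $\varrho_{k+\frac12}$ is the unit Mat\'ern correlation function (i.e.\ \eqref{equ:Matern} with $\sigma=1$, regarded as a function of $|x-y|/\rho$), and note $\diam(D,d_{g_0})\le2\sigma_{\max}$. Then, with $\kappa$ a universal constant,
\[
	\mu\le\E|g_0(x_0)|+2\,\E\sup_{x\in D}\bigl(g_0(x)-g_0(x_0)\bigr)\le\sigma_{\max}\sqrt{\tfrac2\pi}+2\kappa\int_0^{2\sigma_{\max}}\sqrt{\log N(D,d_{g_0},\epsilon)}\,\dd\epsilon .
\]
The crucial estimate is a \emph{uniform} H\"older bound on the canonical metric: there exists $L=L(\sigma_{\max},\rho_{\min},k_{\max},\diam D)<\infty$ with $d_{g_0}(x,y)\le L\,|x-y|^{1/2}$ for all $x,y\in D$ and all $c\in\mc C$. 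Indeed, for $k=0$ one has $\varrho_{1/2}(s)=\e^{-s}$ and $1-\e^{-s}\le s$; for $1\le k\le k_{\max}$ the function $\varrho_{k+\frac12}$ is smooth (a polynomial times a decaying exponential) with $\varrho_{k+\frac12}'(0)=0$ and second derivative bounded on the bounded interval $[0,\diam(D)/\rho_{\min}]$, so $1-\varrho_{k+\frac12}(s)\le\tfrac12\|\varrho_{k+\frac12}''\|_\infty\,s^2$; in either case, using $|x-y|/\rho\le\diam(D)/\rho_{\min}$, one gets $1-\varrho_{k+\frac12}(|x-y|/\rho)\le\mathrm{const}\cdot|x-y|$ with a constant uniform over $k\le k_{\max}$ and $\rho\ge\rho_{\min}$, whence the claimed bound (with exponent $\tfrac12$, the worst case, attained at $k=0$). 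Consequently $N(D,d_{g_0},\epsilon)\le N\bigl(D,\|\cdot\|_{\bbR^d},(\epsilon/L)^2\bigr)\le C_D(L/\epsilon)^{2d}$ for $\epsilon\le2\sigma_{\max}$, so the Dudley integral is dominated by $\int_0^{2\sigma_{\max}}\sqrt{2d\log(L/\epsilon)+\log C_D}\,\dd\epsilon<\infty$, a finite quantity depending only on $d$, $L$, $C_D$. This furnishes the uniform bound on $\mu$, and the reduction step then completes the proof. (The a.s.\ path continuity invoked above is itself a byproduct of this entropy estimate.)

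\emph{Main obstacle.} Everything after the reduction is routine once the uniform H\"older bound on the canonical (Dudley) metric is in hand, so \emph{that} estimate --- controlling $1-\varrho_{k+\frac12}(s)$ near $s=0$ simultaneously over the half-integer smoothness parameters $k\le k_{\max}$ and all correlation lengths $\rho\ge\rho_{\min}$ --- is the technical heart; the remaining ingredients (Borell--TIS, Dudley's bound, covering numbers of Euclidean balls) are standard.
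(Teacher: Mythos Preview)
Your argument is correct and follows the same overall architecture as the paper's proof in Appendix~\ref{app:GRF}: peel off the bounded mean, apply the Borell--TIS inequality to reduce the exponential moment to a uniform bound on $\mu=\E\|g_0\|_{C(D)}$, and then control $\mu$ via Dudley's entropy integral. The difference lies entirely in how the entropy integral is made uniform over $\mc C$. The paper (Lemma~\ref{lem:appMatern}) identifies a single dominating canonical metric \emph{within} the Mat\'ern family, namely the exponential ($\nu=\tfrac12$) metric $d^2(x,y)=2\sigma_{\max}^2\bigl(1-\exp(-\sqrt{2k_{\max}+1}\,|x-y|/\rho_{\min})\bigr)$, verifies $d_c\le d$ for all $c\in\mc C$ by an elementary lower bound on the polynomial factor in~\eqref{equ:Matern}, and then computes $\int_0^\infty\sqrt{\log\mc N(D,d,r)}\,\dd r$ for this specific metric through a change of variables and a series estimate. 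You instead bound each canonical metric by a H\"older function of the Euclidean distance, $d_{g_0}(x,y)\le L\,|x-y|^{1/2}$ uniformly in $c\in\mc C$, and invoke the standard Euclidean covering bound $\mc N(D,|\cdot|,r)\le C_D r^{-d}$. Your route is shorter and more routine, since it avoids the explicit entropy calculation for the exponential metric; the paper's route has the aesthetic of exhibiting the worst-case metric as another Mat\'ern metric, but ultimately does more work, as the dominating exponential metric itself satisfies your H\"older bound (via $1-\e^{-t}\le t$).
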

The proof of this theorem requires powerful tools from Gaussian process theory and is detailed in Appendix~\ref{app:GRF}.
As an immediate consequence of Theorem~\ref{theo:lognormal} and Corollary~\ref{coro:UQ_02} we obtain the following.

\begin{corollary}[Elliptic problem, lognormal diffusion] \label{coro:lognormal}
Let $\mathcal S\colon L^\infty_+(D)\times L^2(D) \to H_0^1(D)$ denote the solution operator of~\eqref{eq:BVP}.
Given $\mc M$ and $\mc C$ as in Theorem~\ref{theo:lognormal} and a radius $r_f < \infty$ there exists a constant $C<\infty$ such that 
\[	d_p\big(\mathcal S_* \prob, \mathcal S_* \probalt\big)
	\leq C\ d_{2p}(\prob, \probalt)
\]
for all product measures $\prob = \prob_a \otimes \prob_f$, $\probalt = \probalt_a \otimes \probalt_f$ on $L^\infty_+(D) \times L^2(D)$ with
\[
	\log_*\prob_a, \log_*\probalt_a \in \{\mathcal N(m,c)\colon m \in \mc M, c\in\mc C\}
\]
and
\[
	\evm{\prob_f}{\|f\|_{L^2(D)}^{2p}}, \; \evm{\probalt_f}{\|f\|_{L^2(D)}^{2p}} \leq r_f.
\]
Here the Wasserstein distance derives from the metric induced by $\|\cdot\|_{H_0^1(D)}$ and either~\eqref{eq:data_metric} or~\eqref{eq:data_metric_2} on $L^\infty_+(D)\times L^2(D)$.
\end{corollary}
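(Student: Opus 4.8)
The plan is to read off Corollary~\ref{coro:lognormal} from Corollary~\ref{coro:UQ_02} by checking the latter's integrability hypotheses \emph{uniformly} over all admissible product measures, with the uniform constant supplied by Theorem~\ref{theo:lognormal}. Since $\prob = \prob_a \otimes \prob_f$ is a product measure, the relevant integral factorizes:
\[
	\int (1+\|f\|_{L^2(D)})^{2p}\, \exp\big(6p\,\|\log a\|_{L^\infty(D)}\big)\,\dd\prob
	=
	\Big(\int (1+\|f\|_{L^2(D)})^{2p}\,\dd\prob_f\Big)\Big(\int \exp\big(6p\,\|\log a\|_{L^\infty(D)}\big)\,\dd\prob_a\Big),
\]
and likewise for $\probalt$. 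For the first factor I would use the elementary convexity bound $(1+t)^{2p} \le 2^{2p-1}(1+t^{2p})$ together with $\evm{\prob_f}{\|f\|_{L^2(D)}^{2p}} \le r_f$, which gives $\int (1+\|f\|_{L^2(D)})^{2p}\,\dd\prob_f \le 2^{2p-1}(1+r_f)$. For the second factor I would pass to the pushforward under $\log$: writing $g := \log a$, a change of variables gives $\int \exp(6p\,\|\log a\|_{L^\infty(D)})\,\dd\prob_a = \int \exp(6p\,\|g\|_{L^\infty(D)})\,\dd(\log_*\prob_a)(g)$, and since $\log_*\prob_a = \mathcal N(m,c)$ with $m \in \mc M$, $c \in \mc C$, Theorem~\ref{theo:lognormal} (applied with exponent parameter $p$, giving exactly the factor $6p$ in~\eqref{eq:exp_mom}) bounds this by a finite constant $C_1 = C_1(p,r_m,\sigma_{\max},k_{\max},\rho_{\min})$ that is \emph{independent} of the particular pair $(m,c)$.

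Multiplying the two factors then produces a single finite constant $C_2 := 2^{2p-1}(1+r_f)\,C_1$ that satisfies the hypothesis of Corollary~\ref{coro:UQ_02} simultaneously for every admissible $\prob$ and, by the identical argument, for every admissible $\probalt$. (These same bounds also guarantee $\prob,\probalt \in \mc P_{2p}(L_+^\infty(D)\times L^2(D))$ for the metric~\eqref{eq:data_metric}, since the integrand dominates $d_{\mc X}\big((a,f),(1,0)\big)^{2p}$ up to a multiplicative constant, so the integral in the definition of $\mc P_{2p}$ is finite.) Corollary~\ref{coro:UQ_02} then yields $d_p(\mathcal S_*\prob, \mathcal S_*\probalt) \le 2c^2 C_2^{1/(2p)}\,d_{2p}(\prob,\probalt)$, which is the asserted estimate with $C := 2c^2\big(2^{2p-1}(1+r_f)\,C_1\big)^{1/(2p)} < \infty$, where $c$ is the constant from~\eqref{eq:8}; this constant depends only on $p$, $r_f$, $r_m$, $\sigma_{\max}$, $k_{\max}$, $\rho_{\min}$ and the domain $D$, as claimed.

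For the variant using the metric~\eqref{eq:data_metric_2} in place of~\eqref{eq:data_metric}, I would invoke the remark following Corollary~\ref{coro:BVP_Lip}: there the local Lipschitz constant carries $\e^{2r}$ rather than $\e^{3r}$, so the corresponding version of Corollary~\ref{coro:UQ_02} only requires integrability of $(1+\|f\|_{L^2(D)})^{2p}\exp(4p\,\|\log a\|_{L^\infty(D)})$. This is again controlled by Theorem~\ref{theo:lognormal}, now applied with exponent parameter $\tfrac{2p}{3}$ (so that $6\cdot\tfrac{2p}{3} = 4p$), and the rest of the argument is unchanged.

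The only genuine analytic work is contained in Theorem~\ref{theo:lognormal} — the uniform (Fernique-type) exponential moment bound over the Matérn family, which is deferred to Appendix~\ref{app:GRF}; the present corollary is then pure bookkeeping. Accordingly, I expect the main obstacle here to be merely ensuring that the exponents line up correctly: the $2p$-th moment on $\|f\|_{L^2(D)}$, the factor $6p$ (resp.\ $4p$) multiplying $\|\log a\|_{L^\infty(D)}$ inside the exponential, and the precise form of the constant delivered by Corollary~\ref{coro:UQ_02}, together with the clean bundling of all parameter dependence ($p$, $r_f$, $r_m$, $\sigma_{\max}$, $k_{\max}$, $\rho_{\min}$) into the single constant $C$.
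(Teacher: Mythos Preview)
Your proposal is correct and follows essentially the same route as the paper: factorize the integral via the product structure, bound the $f$-factor by $2^{2p-1}(1+r_f)$ using $(1+t)^{2p}\le 2^{2p-1}(1+t^{2p})$ and the moment hypothesis, bound the $a$-factor uniformly via Theorem~\ref{theo:lognormal}, and then invoke Corollary~\ref{coro:UQ_02}. Your write-up is in fact a bit more careful than the paper's (explicit change of variables under $\log$, verification of $\mc P_{2p}$ membership, and treatment of the alternative metric~\eqref{eq:data_metric_2}), but the argument is the same.
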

\begin{proof}
We have to verify the assumptions of Corollary~\ref{coro:UQ_02} for $\prob$ and $\probalt$.
Notice that, due to the product structure, we have
\[
	\int (1+\|f\|_{L^2(D)})^{2p}\ \exp(6p\ \|\log a\|_{L^\infty(D)}) \mathrm d\prob
	=
	\int (1+\|f\|_{L^2(D)})^{2p}\ \mathrm d\prob_f
	\
	\int \e^{6p\|\log a\|_{C(D)}}\ \mathrm d\prob_a
\]
and analogously for $\probalt$.
The first term on the right is uniformly bounded by $2^{2p}(1+r_f)$ given the assumptions on $\prob_f$ and $\probalt_f$.
The boundedness of the second term follows by Theorem~\ref{theo:lognormal}.
\end{proof}

\begin{remark}
In view of Remark~\ref{rem:Hoelder} we can modify Corollary~\ref{coro:lognormal} in the following way:
let the source term be deterministic, i.e., $\prob_f = \probalt_f = \delta_{f_0}$ for an  $f_0 \in L^2(D)$, 
and let $\prob_a$, $\probalt_a$ be as in Corollary~\ref{coro:lognormal}. Then, for any $\varepsilon > 0$, there exists a constant $C = C(\varepsilon)<\infty$ such that for $\prob = \prob_a\otimes \delta_{f_0}$ and $\probalt = \probalt_a\otimes \delta_{f_0}$ we have
\[	
	d_p\big(\mathcal S_* \prob, \mathcal S_* \probalt\big)
	\leq C\ d_{p+\epsilon}(\prob, \probalt).
\]
Hence, we almost obtain Lipschitz continuity in the $p$-Wasserstein distance for lognormal models. However, the constant depends on $\varepsilon$ and $C(\varepsilon) \to \infty$ as $\varepsilon\to0$.
\end{remark}

\begin{remark}
The presented results for the elliptic problem, Corollaries~\ref{coro:BVP_Lip},~\ref{coro:UQ_02}, and~\ref{coro:lognormal}, hold for both metrics ~\eqref{eq:data_metric} and~\eqref{eq:data_metric_2} on $L^\infty_+(D)\times L^2(D)$.
The choice of metric slightly changes the constants appearing in the estimates, but more importantly, it determines the corresponding Wasserstein distance of $\prob_a$ and $\probalt_a$.
If $\prob_a, \probalt_a$ are probability measures associated with lognormal random field models as in Corollary~\ref{coro:lognormal}, then the metric~\eqref{eq:data_metric} seems more convenient, since then $d_{p}(\prob_a, \probalt_a)$ coincides with the Wasserstein distance of the associated Gaussian measures on $C(D)$ taken w.r.t.\@ the norm on $\|\cdot\|_{C(D)}$.
\end{remark}

\subsection{Sensitivity with respect to truncation} \label{sec:StrawberryFields}

A convenient representation of random fields on a bounded domain $D\subset\mathbb R^d$, such as $a$ or $f$ in the elliptic problem~\eqref{eq:BVP}, are series expansions with random coefficients, e.g.,
\begin{equation}\label{eq:RFexp}
	f(x, \omega) = f_0(x) + \sum_{k=0}^\infty\sigma_k\, \xi_k(\omega)\, f_k(x), 
	\qquad x\in D,
\end{equation}
where $\xi_{k}$ are mutually uncorrelated mean-zero real-valued random variables with unit variance; $\{f_k\}_{k\in\mathbb N}$ a suitable system of normalized basis functions and $f_0$ represents the mean function of the random field, i.e., $f_0(x) = \ev{f(x,\cdot)}$. 
The most common such expansion is the \emph{Karhunen--Lo\`eve} expansion (KLE): let $f$ be a random field with continuous mean $f_0$ and continuous covariance function $c(x,y) = \cov\big(f(x), f(y)\big)$ and let $C\colon L^2(D) \to L^2(D)$ denote its (trace-class) covariance operator in $L^2(D)$ given by $C\varphi(x) := \int_{D}c(x,y)\,\varphi(y)\,\mathrm dy$.
Then, the KLE of~$f$ is given by~\eqref{eq:RFexp}, where $(\sigma^2_k, f_k)$ are the eigenpairs of the operator~$C$. 
However, the eigensystem of the covariance operator is not the only suitable system for expanding random fields.
In general, any Parseval frame of $L^2(D)$ will yield a similar expansion with uncorrelated random coefficients \cite{LuschgyPages2009}.

For computational purposes one can then truncate an expansion~\eqref{eq:RFexp} after sufficiently many ($K\in\mathbb N$, say) terms and work with the truncated random field
\begin{equation}\label{eq:RFexp_trunc}
	f_K(x,\omega) := f_0(x) + \sum_{k=0}^{K} \sigma_{k}\,\xi_{k}(\omega)\,f_k(x)
\end{equation}
as uncertain coefficient in~\eqref{eq:BVP}.
In order to study the Wasserstein distance of the resulting distributions~$\prob$, $\prob_K\in \mc P(L^2(D))$ of $f$ and $f_K$, respectively, (taken w.r.t.\@ the metric induced by the $L^2(D)$-norm) we can use
\[
	d_p(\prob,\ \prob_K) 
	\leq 
	\ev{\|f - f_K\|^p_{L^2(D)}}^{1/p},
\]
since the distribution of $(f,f_K)$ is obviously a coupling of $\prob$ and $\prob_K$.
For $p=2$ the right-hand side is explicitly
\begin{equation}\label{eq:20}
	d_2(\prob,\ \prob_K) \leq \sqrt{\sum_{k=K+1}^\infty \sigma^2_k}.
\end{equation}
In case of Gaussian random fields $f$ and their truncated KLE we even obtain equality.
\begin{proposition}[$L^2$-Truncation error]  \label{theo:Trunc_Hilbert}
Let $f$ denote a Gaussian random field on $D$ with continuous mean and covariance and let $(\sigma_k^2,f_k)$ denote the eigenpairs of its covariance operator on $L^2(D)$.
Then for $f_K$ as in~\eqref{eq:RFexp_trunc} we have for the resulting distribution $\prob, \prob_K \in \mc P(L^2(D))$ of $f$ and $f_K$ that
\begin{equation}\label{eq:21}
	d_2(\prob,\ \prob_K) = \sqrt{\sum_{k=K+1}^\infty \sigma^2_k},
\end{equation}
where the Wasserstein distance is taken w.r.t.\@ metric induced by the $L^2(D)$-norm.
\end{proposition}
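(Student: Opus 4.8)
The plan is to prove the two inequalities separately; ``$\le$'' is exactly~\eqref{eq:20}, so the substance is the matching lower bound.

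First, for the upper bound: since $f_K$ is formed from the very Karhunen--Lo\`eve coefficients $\xi_k = \sigma_k^{-1}(f-f_0,f_k)_{L^2(D)}$ of $f$, the pair $(f,f_K)$ is defined on a common probability space and its joint law is a coupling of $\prob$ and $\prob_K$. Using orthonormality of $\{f_k\}$, the relations $\ev{\xi_j\xi_k}=\delta_{jk}$, and the $L^2\big(\Omega;L^2(D)\big)$-convergence of the Karhunen--Lo\`eve expansion,
\[
	d_2(\prob,\prob_K)^2
	\le
	\ev{\|f-f_K\|_{L^2(D)}^2}
	=
	\ev{\Big\|\sum_{k=K+1}^\infty \sigma_k\,\xi_k\,f_k\Big\|_{L^2(D)}^2}
	=
	\sum_{k=K+1}^\infty \sigma_k^2 .
\]

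For the lower bound I would exploit that the truncated field lies, up to its deterministic mean, in a fixed subspace of $L^2(D)$. Let $P$ be the orthogonal projection of $L^2(D)$ onto $V:=\overline{\operatorname{span}}\{f_k : k>K\}$. Orthonormality of the eigenfunctions gives $Pf_j=0$ for every index $j\le K$, hence $Pf_K=Pf_0$, a deterministic element of $L^2(D)$, so that $\prob_K\big(\{g\colon Pg=Pf_0\}\big)=1$. Now let $\pi\in\Pi(\prob,\prob_K)$ be an arbitrary coupling with canonical variables $(X,Y)\sim\pi$. Since $P$ has operator norm at most one,
\[
	\evm{\pi}{\|X-Y\|_{L^2(D)}^2}
	\ge
	\evm{\pi}{\|P(X-Y)\|_{L^2(D)}^2}
	=
	\evm{\pi}{\|PX-Pf_0\|_{L^2(D)}^2}
	=
	\evm{\prob}{\|P(X-f_0)\|_{L^2(D)}^2},
\]
the last step using that the integrand depends only on $X$, whose $\pi$-marginal is $\prob$. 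Applying $P$ to the Karhunen--Lo\`eve expansion of the centered field gives $P(X-f_0)=\sum_{k>K}\sigma_k\,\xi_k\,f_k$ under $\prob$, whence $\evm{\prob}{\|P(X-f_0)\|_{L^2(D)}^2}=\sum_{k=K+1}^\infty\sigma_k^2$. Taking the infimum over all couplings yields $d_2(\prob,\prob_K)^2\ge\sum_{k=K+1}^\infty\sigma_k^2$, which together with the upper bound proves~\eqref{eq:21}.

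I anticipate no serious obstacle; the one delicate point is the identity $Pf_K=Pf_0$ holding $\prob_K$-almost surely, which is precisely where the structure of the \emph{truncated} expansion and the orthonormality of the eigenbasis enter. Alternatively, one could invoke the Gelbrich/Bures--Wasserstein formula for the $2$-Wasserstein distance between Gaussian measures on a Hilbert space: since the covariance operator $C$ of $f$ and its truncation $C_K$ commute and the means coincide, that formula collapses to $\|C^{1/2}-C_K^{1/2}\|_{\mathrm{HS}}^2=\sum_{k>K}\sigma_k^2$. The projection argument above is, however, shorter and uses neither Gaussianity nor the Bures--Wasserstein identity.
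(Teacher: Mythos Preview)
Your proof is correct and takes a genuinely different route from the paper. The paper invokes the Gelbrich formula for the $2$-Wasserstein distance between Gaussian measures on a Hilbert space, $d_2(\mathcal N(f_0,C),\mathcal N(f_0,C_K))^2=\tr C+\tr C_K-2\tr\sqrt{C_K^{1/2}CC_K^{1/2}}$, and then computes each trace using the shared eigenbasis of $C$ and $C_K$; this is precisely the alternative you mention at the end. Your projection argument for the lower bound is both shorter and strictly more general: it uses only that $\prob_K$ is supported on the affine subspace $\{g:Pg=Pf_0\}$ and that $\evm{\prob}{\|P(\cdot-f_0)\|^2}=\sum_{k>K}\sigma_k^2$, neither of which requires Gaussianity---so your proof actually establishes~\eqref{eq:21} for \emph{any} second-order random field with this covariance structure and $L^2$-convergent KLE, whereas the paper's route via Gelbrich genuinely needs the Gaussian hypothesis. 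The trade-off is that the Gelbrich approach is a one-line application of a known closed-form identity, while yours requires constructing the right test projection; in this instance your construction is immediate from the KLE orthogonality, so the elementary argument wins on every count.
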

\begin{proof}
By construction $\prob = \mathcal N(f_0, C)$ and $\prob_K = \mathcal N(f_0, C_K)$ are Gaussian distributions on the Hilbert space $L^2(D)$ where $C_K \varphi := \int_D c_K(x,y) \varphi\, \mathrm d y$ with $c_K(x,y) := \cov(f_K(x), f_K(y)) = \sum_{k=1}^K\sigma_k^2\,f_k(x)\,f_k(y)$.
	For Gaussian measures on Hilbert spaces there exists an exact formula for their $2$-Wasserstein distance
Gelbrich
\cite{Gelbrich1990} which in this case is 
	\[
		d_2(\prob,\ \prob_K)^2
		=
		d_2\big(\mathcal N(f_0, C),\ \mathcal N(f_0, C_K)\big)^2
		=
		\tr(C) + \tr(C_K) - 2 \tr\left( \sqrt{C_K^{\nicefrac 12}\, C \, C_K^{\nicefrac 12}}\right).
	\]
	We have that $\tr(C) = \sum_{k=1}^\infty \sigma_k^2$ and $\tr(C_K) = \sum_{k=1}^K \sigma_k^2$.
	Moreover, $C_K$ and $C$ share the same eigensystem and the null space of $C_K$ is the closure of the span of $\{f_{k}\colon k >K\}$.
	Thus, $C_K^{\nicefrac 12}\, C\, C_K^{\nicefrac 12}$ has the eigenpairs $(\widetilde \sigma^2_k , f_k)$ with $\widetilde \sigma^2_k = \sigma^4_k$ for $k=1,\ldots,K$ and $\widetilde \sigma^2_k = 0$ for $k > K$.
	This leads to,
	\[
		\tr(C) + \tr(C_K) - 2 \tr\left( \sqrt{C_K^{\nicefrac 12}\, C\, C_K^{\nicefrac 12}}\right)
		=
		2\sum_{k=1}^K \sigma^2_k
		+ \sum_{k>K} \sigma^2_k
		-
		2
		\sum_{k=1}^K \sigma^2_k
		=
		\sum_{k>K} \sigma^2_k,
	\]
	as desired.
\end{proof}

A similar statement holds for the Wasserstein distance of truncated random fields in Banach space norms.
We consider here the case when the logarithm $\log a$ of a positive diffusion coefficient $a \in L^\infty_+(D)$ is expanded in a series analogous to~\eqref{eq:RFexp}.
The case of truncating suitable series expansions for $a$ itself can be treated similarly.
\begin{proposition}[$L^\infty$ truncation bound] \label{theo:Trunc_Banach}
	Let
	\begin{equation}\label{eq:KLE_a}
		\log a(x, \omega)
		=
		a_0(x) + 
		\sum_{k=1}^\infty \sigma_k \, \xi_k(\omega)\, a_k(x),
		\qquad
		x\in D,
	\end{equation}
	converge almost surely in $L^\infty(D)$ with uncorrelated mean-zero random variables $\xi_k$ where $\var[\xi_k]=1$.
	Let $\log a_K$ denote the random field resulting from truncating the series in~\eqref{eq:KLE_a} after $K$ terms and let $\prob, \prob_K \in \mc P(L^\infty_+(D))$ denote the distributions of $a$ and $a_K$, respectively.
	If there exists a constant $C<\infty$ such that $\ev{|\xi_k|} < C$ for all $k\in\mathbb N$, then
	we have for the Wasserstein distance taken w.r.t.\@ the metric $d(a,a_K) := \|\log a -\log a_K\|_{L^\infty(D)}$ on $L^\infty_+(D)$ that
	\begin{equation}
		d_1\left(\prob,\, \prob_K\right) \leq C \sum_{k=K+1}^\infty |\sigma_k| \|a_k\|_{L^\infty(D)}.
	\end{equation}
	In the case of bounded random coefficients, i.e., $|\xi_k| \leq C$ almost surely for all $k\in\mathbb N$, we also have for any $p\geq 1$
	\begin{equation}\label{eq:trunc_loga_2}
		d_p\left(\prob,\, \prob_K\right) \leq C \sum_{k=K+1}^\infty |\sigma_k| \|a_k\|_{L^\infty(D)}.
	\end{equation}
\end{proposition}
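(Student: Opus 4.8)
The plan is to use the obvious coupling furnished by the common probability space on which the expansion~\eqref{eq:KLE_a} is defined. Since the series converges almost surely in $L^\infty(D)$ on $(\Omega,\mathcal F,\prob)$, the map $\omega \mapsto \big(a(\omega),\, a_K(\omega)\big)$ is a measurable map into $L^\infty_+(D)\times L^\infty_+(D)$, and its law $\pi$ is a coupling of $\prob$ and $\prob_K$. Consequently, for every $p\ge 1$, by the definition of the $p$-Wasserstein distance with respect to the metric $d(a,\tilde a)=\|\log a-\log\tilde a\|_{L^\infty(D)}$,
\[
	d_p(\prob, \prob_K)^p
	\le
	\iint d(a,\tilde a)^p \,\pi(\mathrm d a, \mathrm d \tilde a)
	=
	\ev{\|\log a - \log a_K\|_{L^\infty(D)}^p}
	=
	\ev{\Big\| \sum_{k=K+1}^\infty \sigma_k\,\xi_k\, a_k \Big\|_{L^\infty(D)}^p},
\]
where in the last step I use that $\log a - \log a_K = \sum_{k>K}\sigma_k\xi_k a_k$ is the tail of an almost surely $L^\infty(D)$-convergent series, hence itself converges almost surely in $L^\infty(D)$.

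Next I would apply the triangle inequality in $L^\infty(D)$ pathwise, which gives almost surely
\[
	\Big\| \sum_{k=K+1}^\infty \sigma_k\,\xi_k\, a_k \Big\|_{L^\infty(D)}
	\le
	\sum_{k=K+1}^\infty |\sigma_k|\,|\xi_k|\,\|a_k\|_{L^\infty(D)}.
\]
For the first assertion ($p=1$) I take expectations of both sides and exchange the expectation with the infinite sum by Tonelli's theorem, all summands being nonnegative, to obtain
\[
	d_1(\prob,\prob_K)
	\le
	\sum_{k=K+1}^\infty |\sigma_k|\,\ev{|\xi_k|}\,\|a_k\|_{L^\infty(D)}
	\le
	C\sum_{k=K+1}^\infty |\sigma_k|\,\|a_k\|_{L^\infty(D)},
\]
using the uniform bound $\ev{|\xi_k|}<C$ (the estimate being vacuous should the series on the right diverge).

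For the second assertion, the almost-sure bound $|\xi_k|\le C$ dominates the right-hand side of the pathwise triangle inequality by the \emph{deterministic} quantity $C\sum_{k>K}|\sigma_k|\,\|a_k\|_{L^\infty(D)}$; inserting this into the displayed bound on $d_p(\prob,\prob_K)^p$ and taking $p$-th roots yields~\eqref{eq:trunc_loga_2} for every $p\ge1$. I do not anticipate a genuine obstacle here: the only points deserving a word of justification are the measurability of $\omega\mapsto(a(\omega),a_K(\omega))$ (which follows from the almost-sure $L^\infty(D)$-convergence of~\eqref{eq:KLE_a}) and the interchange of the infinite sum and the expectation in the $p=1$ case, both of which are routine.
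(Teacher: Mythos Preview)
Your proof is correct and follows essentially the same approach as the paper: bound $d_p(\prob,\prob_K)$ via the natural coupling given by the joint law of $(a,a_K)$, apply the triangle inequality pathwise to the tail series, and then either interchange expectation and sum for $p=1$ or use the deterministic bound $|\xi_k|\le C$ for general $p$. You are slightly more explicit than the paper in invoking Tonelli's theorem for the interchange and in flagging measurability of the coupling map, but these are exactly the routine points the paper glosses over.
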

\begin{proof}
Let $\pi \in \Pi(\prob, \prob_K)$ denote the distribution of the pair $(\log a, \log a_K)$ on $L^\infty(D)\times L^\infty(D)$.
Then
\begin{align*}
	d_p\left(\prob,\ \prob_K\right)
	& \leq \ev{\|\log a - \log a_K\|^p_{L^\infty(D)}}^{1/p}
	\leq \ev{\left( \sum_{k>N} |\sigma_{k}|\,|\xi_k|\, \|a_k\|_{L^\infty(D)}\right)^p}^{1/p}.
\end{align*}
For the bounded case, $|\xi_k| \leq C$ almost surely for all $k\in\mathbb N$, we obtain the second statement immediately.
And for $p=1$ the first statement follows easily by taking the expectation inside the series in the above inequality.
\end{proof}
\begin{remark}\label{rem:Charrier}
A result of Charrier \cite{Charrier2012} on the truncation error in the $L^p(\Omega; L^\infty(D))$-norm can be used bound the Wasserstein distance in~\eqref{eq:trunc_loga_2}.
For KLE and under appropriate assumptions \cite{Charrier2012}, these yield
\[
	d_p\left(\prob,\ \prob_K\right)
	\leq
	C_p \max\left(\sum_{k=K+1}^\infty \sigma^2_k \|a_k\|^2_{L^\infty(D)}, \sum_{k=K+1}^\infty \sigma^2_k \|a_k\|^{2\alpha}_{L^\infty(D)}\|\nabla a_k\|^{2(1-\alpha)}_{L^\infty(D)}  \right)^{\nicefrac 12},
\]
where $\alpha\in(0,1)$ is such that the second series converges.
\end{remark}


\section{Risk Functionals for Uncertainty Analysis} \label{sec:Risk}

The focus of an uncertainty propagation analysis involving random PDEs is typically a quantity of interest associated with the PDE solution, from which useful information may be extracted by statistical post-processing. 
Risk functionals can be viewed as a specific type of post-processing, as they condense the probability distribution of a random variable into a number reflecting the impact of its fluctuations for a particular quantity of interest.
As such, they are crucial to linking an uncertainty propagation analysis to the underlying application.

\subsection{Risk Functionals} 

Risk functionals have gained increased relevance in the recent past, initially driven by mathematical finance and subsequently adopted by the engineering and optimization communities, as detailed in the Introduction.
Risk functionals assign real numbers to random variables in such a way that these values constitute a measure of the \emph{risk} associated with their random outcomes. 
As such they are defined on a vector space $\mc L$ of real-valued random variables. 
In the uncertainty propagation setting for random PDEs, a risk functional $\rho$ would typically be applied to the output quantities of interest, resulting in the chain of mappings
\[
    \underbrace{
    {\mc X} \xrightarrow{\; {\mc S} \;} 
    {\mc U} \xrightarrow{\; \phi \;} 
    \mathbb R
	}_{\rho\colon \mc L\to\mathbb R} 
%
\]
in which the sequence above the underbrace denotes the deterministic problem and randomness enters below due once a probability distribution is introduced on $\mc X$.
\begin{definition}[Risk functional, cf.\ Artzner et al.\ \cite{Artzner1997}] \label{def:RiskMeasure}
	Assume an abstract probability space $(\Omega, \mc F, \prob)$ and a vector space $\mathcal L$ of real-valued random variables $X\colon \Omega \to \mathbb R$.
	A mapping $\Risk\colon\mathcal L\to\mathbb{R}\cup\{\infty\}$ is a \emph{risk functional} if it satisfies the following axiomatic properties for $X$, $Y \in\mathcal L$.
	\begin{subequations}
		\begin{align}
		& \Risk(X) \le \Risk(Y), \text{ if } X\le Y \text{ a.s.} 
			&& \text{(monotonicity)}, 							\label{enu:Eins}\\
		& \Risk(X+c) = \Risk(X)+c \text{ for } c\in\mathbb R    
			&& \text{(translation equivariance)}, 				\label{enu:Zwei} \\
		& \Risk(X+Y) \le \Risk(X) + \Risk(Y) 
			&&\text{(subaddititvity)}, 							\label{enu:Drei}\\
		& \Risk(\lambda\cdot X) = \lambda\cdot \Risk(X) \text{ for } \lambda>0 
			&&\text{(positive homogeneity)}. 					\label{enu:Vier} 
		\end{align}
	\end{subequations}
\end{definition}

These  properties  possess natural interpretations in the context of risk management and insurance.
In a financial context risk functionals are employed to price insurance policies by mapping the policy to the premium. 
Here subadditivity~\eqref{enu:Drei} expresses that combining risk in a single policy is preferable to issuing separate insurance contracts.
In an engineering setting, positive homogeneity~\eqref{enu:Vier} implies that rescaling by a change of physical units leaves risk unaffected.
Occasionally in the literature the term \emph{risk functional} can also be found for mappings $\Risk$ satisfying only~\eqref{enu:Eins}--\eqref{enu:Drei}, while risk functionals satisfying also~\eqref{enu:Vier} are then referred to as \emph{coherent} risk functionals.

\begin{remark}[Domain of risk functionals]
	Ruszczyński and Shapiro	\cite{Ruszczynski2006} discuss risk functionals on $L^p$ space ($p\in [1,\infty]$).
	They conclude that $\Risk$ is either continuous on $L^p$ or the set $\{X\in L^p\colon \Risk(X)=\infty\}$ is dense in $L^p$.
	To specify the largest class of random variables with $\|\cdot\|_\rho <\infty$ one may associate a norm and a domain $\mathcal L$ with a risk functional $\Risk$ in a natural way.
	To this end define
	\begin{equation}\label{eq:Norm2}
		\left\Vert X \right\Vert_{\Risk} := \Risk(|X|) 
		\quad \text{ for } \quad X \in 
		\mathcal L \coloneqq \left\{ 
			X\colon\Omega \to \mathbb R  \text{ measurable with } \left\Vert X\right\Vert_{\Risk} <\infty
			\right\}.
	\end{equation}	
	The pair $\bigl(\mathcal L,\left\Vert \cdot\right\Vert_{\Risk}\bigr)$ is then a Banach space, the largest possible for which $\Risk$ is finite on $\mathcal L$, cf.\ Pichler \cite{Pichler2013a}. The space $\mathcal L$ is most typically a Lorentz rearrangement space, cf.\ Pichler and Kalmes \cite{KalmesPichler}.
\end{remark}

\begin{remark}[Representation by convex duality] \label{rem:Dual}
	In what follows let~$\mathcal L^*$ denote the dual (or pre-dual) of $\mathcal L$ with duality pairing
	\[	\langle X,Z\rangle
	= \ev{X\,Z} = \int_\Omega X(\omega)\,Z(\omega) \,\prob(\mathrm d \omega),
	\qquad  X \in\mathcal L,\, Z \in \mathcal L^\ast.\] 
	For any subset 
	\begin{equation} \label{eq:support}
		\mathcal A\subset\left\{ Z\in\mathcal L^*\colon \ev{Z} = 1 \text{ and } Z\ge0\right\},
	\end{equation}
	a risk functional satisfying all properties~\eqref{enu:Eins}--\eqref{enu:Vier} above is given by
	\begin{equation}\label{eq:6}
		\Risk(X):=\sup\left\{ \ev{X\,Z}\colon Z\in\mathcal{A}\right\}.
	\end{equation}
	We then call $\mathcal A$ the \emph{support set} of $\Risk$ in~\eqref{eq:6}.

	By~\eqref{enu:Drei} and~\eqref{enu:Vier}, any risk functional $\Risk(\cdot)$ is convex.
	It follows from convex duality (the Fenchel--Moreau theorem, cf.\ 
	Rockafellar
	\cite{Rockafellar1974}) that
	\[ 
		\Risk(X) = \sup\big\{\ev{X\,Z} - \Risk^*(Z) : Z\in\mathcal L^*\big\},
	\]
	where for $Z \in\mathcal L^\ast$
	\[ 
		\Risk^*(Z):= \sup\big\{\ev{X\,Z} - \Risk(X) : X\in\mathcal L\big\}
	\]
	is the convex dual function to $\Risk$ (cf.\ 
	Shapiro et al.\ 
	\cite{RuszczynskiShapiro2009}). 
	It follows from~\eqref{enu:Zwei} and~\eqref{enu:Eins} that
	\[
		\Risk^*(Z)
		=
		\begin{cases}
			0       & \text{ if }\,\ev{Z}=1 \text{ and } Z\ge 0,\\
			+\infty & \text{ otherwise}.
		\end{cases}
	\]
	We may thus define the support set of any risk functional $\Risk$ by 
	\begin{equation} \label{eq:support-dual}
		\mathcal A
		:=
		\big\{Z\in\mathcal L^*\colon \Risk^*(Z)<\infty\big\},
	\end{equation}
	so that~\eqref{eq:6} applies.
		
	The random variables $Z\in\mathcal A$ are \emph{densities} with respect to the probability measure $\prob$: they are nonnegative and they satisfy $\ev{Z}=1$.
	The support set defined in~\eqref{eq:support-dual} thus consists of densities (cf.~\eqref{eq:support}).
    Moreover, the set $\mathcal{A}$ in~\eqref{eq:support-dual} is weak{*} closed, and hence the supremum in~\eqref{eq:6} is attained. 
	\end{remark}
	\begin{remark}[Assessment and quantification of risk]
	The random variables $Z\in \mathcal A$ provide a useful interpretation of the risk functional. 
	To this end suppose that $Z^*$ is optimal in~\eqref{eq:6} so that 
	\begin{equation}\label{eq:weight}
		\Risk(X) = \sup_{Z\in\mathcal A}\ev{X\,Z} = \ev{X\,Z^*}.
	\end{equation}
	Then $Z^*$ acts as a weight, as it is nonnegative and integrates to~$1$ ($\ev{Z}=1$).
	The weighted expectation $\ev{X\,Z^*}$ in~\eqref{eq:weight} weights every outcome $X(\omega)$ with $Z^*(\omega)$ in the worst possible way ($Z^*$ attains the supremum):
	unfavorable outcomes $X(\omega)$ will be overvalued and assigned a high weight $Z^*(\omega)>1$, while favorable outcomes $X(\omega)$ will be assigned a lower weight $Z^*(\omega)\le1$.
	In this interpretation, the random variable $Z^*$ is an individual assessment of risk for the particular random variable $X$.
\end{remark}

\begin{example}[Risk neutrality and maximal risk aversion] \label{ex:1}
The risk functional 
\[
	\Risk(X) := \ev{X}
\] 
is the simplest functional satisfying all axioms above, it is called the \emph{risk neutral} risk functional, as it ignores fluctuations around the mean completely.
Its support set $\mathcal A=\left\{ \one\right\}$ consists of a single element, the constant density $\one(\cdot) \equiv1$.
	
By contrast, the functional 
\[	\Risk(X) := \esssup X	\] 
is the most conservative risk functional.
It indicates maximal risk aversion, as it represents the risk associated with the random outcome $X$ by its largest possible outcome. 
It has the maximal support set $\mathcal A=\{Z\colon Z\ge 0 \text{ and }\ev{Z}=1\}$.
\end{example}

\begin{example}[Average value-at-risk]
The \emph{average value-at-risk} is the most prominent example of a risk functional. 
At risk level $\alpha\in[0,1)$, this functional is given by
\begin{equation}\label{eq:19}
	\AVaR_\alpha(X)\coloneqq\frac1{1-\alpha}\int_\alpha^1 F_X^{-1}(u)\,\dd u,
\end{equation}
where 
\begin{equation}\label{eq:22}
	F_X^{-1}(u) \coloneqq \VaR_u(X) \coloneqq \inf\big\{x\in\mathbb R \colon \prob(X \le x)\ge u \big\}
\end{equation}
is the \emph{quantile function}, or \emph{value-at-risk} at level $\alpha$.\
Note that the value-at-risk itself is not a risk functional in the sense of Definition~\ref{def:RiskMeasure}, since it does not satisfy subadditivity~\eqref{enu:Drei}, but~\eqref{eq:19} constitutes the convex envelope of~\eqref{eq:22}, cf.\ \cite{Follmer2004}.
The support set 
\begin{equation}
	\mathcal{A}	=
	\left\{ Z\colon \ev{Z}=1\text{ and }0\le Z\le\frac1{1-\alpha}\text{ a.s.}\right\}
\end{equation}
allows a representation of $\AVaR$ as a supremum as in~\eqref{eq:6}. 
By contrast, the representation
\begin{equation}\label{eq:AVaR}
	\AVaR_{\alpha}(X)
	:=
	\inf_{\ q\in\mathbb{R}}  q+\frac1{1-\alpha}\ev{(X-q)_+},
	\quad 
	\text{ where } x_+:=\max(0,x),
\end{equation}
as an infimum derives from convex duality, cf.\ 
Ogryczak and Ruszczyński
\cite{RuszOgryczak}, 
Rockafellar and Uryasev
\cite{RockafellarUryasev2000} and 
Pflug
\cite{Pflug2000}.
The average value-at-risk is also known as \emph{conditional value-at-risk}. 
Actuaries, however, prefer the terms \emph{expected shortfall} or \emph{conditional tail expectation}.
\end{example}
\begin{example}[Semideviation]
The semideviation risk functional selectively penalizes deviations above the mean and is defined by 
\[
	\Risk(X):= \ev{X} + \beta\cdot\ev{\big(X-\ev{X}\big)_+},
\]
where $\beta\in[0,1]$ is a coefficient of risk aversion.
More generally, for $p \ge 1$ the $p$-semideviation is
\[
	\Risk(X):=\ev{X} + \beta\cdot\big\|(X-\ev{X})_+\big\|_p.
\]
The semideviation has the alternative representation in terms of average value-at-risk
\[	\Risk(X) =	\sup_{\kappa\in (0,1)} (1-\beta\,\kappa)\ev{X} + \beta\,\kappa\,\AVaR_{1-\kappa}(X).\]
A slightly more complicated version is available for the $p$-semideviation as well and given in 
Pichler and Shapiro
\cite[Corollary~6.1]{ShapiroAlois}.
\end{example}

\begin{example}[Spectral risk functionals]
	\emph{Spectral risk functionals} constitute a class of risk functionals which reweight the quantiles~\eqref{eq:22}.
	They are defined in terms of a \emph{spectral function} $\sigma\colon[0,1)\to\mathbb R_0^+$ satisfying $\int_0^1\sigma(u)\,\mathrm du=1$ by
	\begin{equation}\label{eq:spectral}
		\Risk_{\sigma}(X) := \int_0^1 \sigma(u)\,F_X^{-1}(u)\,\mathrm du,
	\end{equation}
	are another way of quantifying risk, which for
	$\sigma = \frac{1}{1-\alpha}\mathds{1}_{[\alpha,1]}$
	recovers average value-at-risk.
	Here the support set is given by (cf.\ 
	Pichler
	\cite{Pichler2013b})
	\begin{equation}\label{eq:Spectral}
		\mathcal{A}_\sigma
		\coloneqq
		\left\{ 
		Z\colon Z\ge0,\ \AVaR_{\alpha}(Z)\le\frac1{1-\alpha}\int_\alpha^1\sigma(u) \,\dd u \text{ for all }\alpha<1
		\right\}.
	\end{equation}
\end{example}

\begin{example}[Entropic value-at-risk]\label{ex:2}
	The \emph{entropic value-at-risk} is a logarithmic analogue of the average value-at-risk. Partucularly the representation~\eqref{eq:AVaR} gives rise to consider
	\begin{equation}\label{eq:EVaR}
		\EVaR_{\alpha}(X):= \inf_{t>0}\ \frac1{t} \log\frac1{1-\alpha} \ev{e^{tX}}.
	\end{equation}
	Its support set is
	\[	\mathcal{A}
		\coloneqq \left\{ Z\ge0\colon \ev{Z}=1\text{ and }\ev{ Z\log Z} \le \log\frac1{1-\alpha}\right\},\]
	where $H(Z):=\ev{Z\log Z}$ is the entropy of the density~\(Z\).
\end{example}

The risk functionals exemplified above concentrate the outcomes of a random variable in a single real number. In this way it is possible to assess outcomes, which are not known beforehand.

The following section addresses the question of how to valuate these outcomes, as the precise distribution is typically not known. For this reason we provide the corresponding sensitivity analysis in the next section, which finally gives full access to assessing the random outcome in the uncertain environment.

\subsection{Sensitivity Analysis for Risk Functionals}  \label{sec:Bivariate}

All risk functionals discussed above depend in different ways on the underlying probability measure $\prob$ of the probability space $(\Omega,\mc F, \prob)$. In an uncertain setting, however, $\prob$ is not known precisely.
It may be approximated by statistical estimation of parameters defining a family of probability distributions; alternatively, only an empirical measure 
$\prob_n:=\frac1n\sum_{i=1}^n \delta_{X_i}$
may be available.
To discuss uncertainty it is thus essential to account for variations of the probability measure $\prob$ and investigate the impact resulting from such imprecise knowledge.

We now study the sensitivity of risk functionals with respect to the underlying probability measure~$\prob$, measuring perturbations of the latter in Wasserstein distance.
To this end, consider $\prob$, $\probalt \in \mc P(\mc X)$. Evaluations with respect to different probability measures are made explicit by writing the probability measure as a subscript, e.g., $\evm{\prob}{X} = \int_{\mc X} X\,\mathrm d\prob$ and $\evm{\probalt}{X}=\int_{\mc X} X\,\mathrm d\probalt$.
In order to analyze the effect of the probability measure $\prob$ on the value $\Risk(X)$ 
we consider an associated risk functional $\Risk_\pi$ for random variables $X'\colon \mc X\times \mc X \to \mathbb R$ on the product space  equipped with a coupling $\pi \in \Pi(\prob, \probalt)$ as the underlying probability measure.
That is, we consider 
\[	
	\Risk_\pi(X^\prime) 
	:= 
	\sup\left\{ \evm{\pi}{X^\prime\,Z}\colon Z\in\mathcal{A_\pi}\right\},
\]
where
\[	
	\mathcal{A_\pi}\subset\left\{ Z\colon \mc X \times \mc X\to [0,\infty) 
	\text{ such that }
	\evm{\pi}{Z}=1\right\}
\]
denotes the support set of $\Risk_\pi$ on the product space.
Furthermore, let $\mathtt p_i\colon \mc X\times \mc X \to \mc X$, $i=1,2$, with $\mathtt p_i\big((x_1,x_2)\big):= x_i$ denote the canonical projections so that $\prob = \pi\circ \mathtt p^{-1}_1$ and $\probalt = \pi\circ\mathtt p^{-1}_2$.
We thus may define $\Risk(X)$ for a $X\colon \mc X\to \mathbb R$ given either $\prob$ or $\probalt$, by
\begin{equation}\label{eq:RP}
	\Risk_\prob(X) 
	:=	
	\Risk_\pi(X \circ \mathtt p_1)
	=
	\sup\left\{ \evm{\pi}{Z\cdot (X \circ \mathtt p_1)}\colon Z\in\mathcal{A}_\pi\right\}
\end{equation}
and
\begin{equation}\label{eq:RQ}
	\Risk_\probalt(X)
	:=	
	\Risk_\pi(X\circ \mathtt p_2)
	=
	\sup\left\{ \evm{\pi}{Z\cdot (X\circ \mathtt p_2)}\colon Z\in\mathcal{A}_\pi\right\}.
\end{equation}
\begin{remark}
The converse is possible as well. 
Appendix~\ref{app:Risk} explicitly constructs a risk functional $\Risk_\pi$ from its marginals $\Risk_\prob$ and $\Risk_\probalt$ as in~\eqref{eq:RP} and~\eqref{eq:RQ}. 
\end{remark}

\begin{remark}
\emph{Law-invariant} risk functionals $\Risk$ depend on the cumulative distribution function of $X$ only (see~\eqref{eq:spectral}, e.g.). For these risk functionals it is obvious that~\eqref{eq:RP} and~\eqref{eq:RQ} are consistent definitions:
consider, for instance, the average value-at-risk and a coupling $\pi\in \Pi(\prob,\probalt)$ for two arbitrary probability measures on $\mc X$.
Then, for any random variable $X\colon \mc X \to \mathbb R$, we have
\begin{align*}
	\AVaR_{\alpha,\pi}(X\circ \mathtt p_1) 
	& = \inf_{q\in\mathbb{R}}q+\frac1{1-\alpha}\evm{\pi}{\big(X\circ \mathtt p_1-q\big)_+}\\
	& = \inf_{q\in\mathbb{R}}q+\frac1{1-\alpha}\evm{\prob}{\big(X-q\big)_+}
	= \AVaR_{\alpha,\prob}(X) 
\end{align*}
and analogously that $\AVaR_{\alpha,\pi}(X\circ \mathtt p_2)=\AVaR_{\alpha,\probalt}(X)$.
	
We note that all examples in the preceding subsection are law-invariant risk functionals.
However, the setting outlined here also allows the analysis of risk functional which are not law-invariant.
Such risk functionals appear, for example, in insurance when the cause of the loss $X(\omega)$ is also important: damage caused by floods or natural disasters can pose a higher risk to insurance companies (due to cross-correlations) than damage by accidents.
\end{remark}

The following theorem states that it suffices to consider risk functionals on the  marginals only.

\begin{theorem}
Let $\Risk_\pi$ be a risk measure on the product space $\mathcal X\times\mathcal X$ and $X\colon\mathcal X\to\mathbb R$ a random variable. Then there is a marginal support set $\mathcal A_\prob$ such that
\[	
	\Risk_\prob(X)
	=	
	\sup\left\{ \evm{\prob}{Z_1\cdot X}\colon Z_1\in\mathcal{A}_\prob \right\}.
\]
The marginal support set is
\[
	\mathcal{A_\prob} =\left\{ \evm{\pi}{Z\mid \mathtt p_1 = \cdot}\colon Z \in \mathcal{A}_\pi\right\},
\]
where we view $\evm{\pi}{Z\mid \mathtt p_1 = \cdot}$ as a real-valued random variable on $\mathcal X = \mathrm{range}(\mathtt p_1)$.
The assertion holds analogously for $\Risk_\probalt$ with $\mathcal{A_\probalt} =\left\{ \evm{\pi}{Z\mid \mathtt p_2 = \cdot}\colon Z \in \mathcal{A}_\pi\right\}$.
\end{theorem}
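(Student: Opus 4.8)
The plan is to obtain the representation by pushing the support set on the product space down to its first marginal via the tower property of conditional expectation. First, since $\Risk_\pi$ is a risk functional on $\mcX\times\mcX$ with underlying measure $\pi$, Remark~\ref{rem:Dual} provides the dual representation $\Risk_\pi(X') = \sup\{\evm{\pi}{X'\,Z}\colon Z\in\mathcal{A}_\pi\}$, where $\mathcal{A}_\pi = \{Z\in\mathcal L^*\colon \Risk_\pi^*(Z)<\infty\}$ consists of nonnegative densities with $\evm{\pi}{Z}=1$. Applying this to $X' = X\circ\mathtt p_1$ and recalling~\eqref{eq:RP} gives $\Risk_\prob(X) = \sup_{Z\in\mathcal{A}_\pi}\evm{\pi}{(X\circ\mathtt p_1)\,Z}$, so it remains to rewrite each term $\evm{\pi}{(X\circ\mathtt p_1)\,Z}$ as an integral against $\prob$.

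Next, for fixed $Z\in\mathcal{A}_\pi$, I would condition on the first projection. Since $X\circ\mathtt p_1$ is $\sigma(\mathtt p_1)$-measurable, the tower property yields $\evm{\pi}{(X\circ\mathtt p_1)\,Z} = \evm{\pi}{(X\circ\mathtt p_1)\,\evm{\pi}{Z\mid\mathtt p_1}}$. Writing $Z_1 := \evm{\pi}{Z\mid\mathtt p_1=\cdot}$, viewed as a function on $\mcX = \mathrm{range}(\mathtt p_1)$, and using $\prob = \pi\circ\mathtt p_1^{-1}$ together with the change-of-variables formula, the right-hand side equals $\evm{\prob}{X\,Z_1}$. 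Moreover $Z_1\ge0$ $\prob$-a.s., and $\evm{\prob}{Z_1} = \evm{\pi}{\evm{\pi}{Z\mid\mathtt p_1}} = \evm{\pi}{Z} = 1$, so $Z_1$ is again a density (in the appropriate dual/pre-dual space underlying $\Risk_\prob$). Since every $Z_1\in\mathcal{A}_\prob$ arises in this way and conversely, the family of values $\{\evm{\pi}{(X\circ\mathtt p_1)\,Z}\colon Z\in\mathcal{A}_\pi\}$ coincides exactly with $\{\evm{\prob}{X\,Z_1}\colon Z_1\in\mathcal{A}_\prob\}$, whence $\Risk_\prob(X) = \sup\{\evm{\prob}{Z_1\cdot X}\colon Z_1\in\mathcal{A}_\prob\}$ with $\mathcal{A}_\prob$ as asserted. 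The statement for $\Risk_\probalt$ follows verbatim with $\mathtt p_2$ in place of $\mathtt p_1$.

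I expect the main obstacle to be purely measure-theoretic bookkeeping rather than conceptual: one needs a regular version of the conditional expectation $\evm{\pi}{Z\mid\mathtt p_1}$ (equivalently, a disintegration of $\pi$ along $\mathtt p_1$), which is available since $\mcX$ is Polish, and one must verify that the resulting $Z_1$ lies in the dual space $\mathcal L^*$ attached to $\Risk_\prob$ so that it is an admissible density there. The latter is where the abstract $\mathcal L$-framework of Remark~\ref{rem:Dual} must be invoked carefully; it holds because conditional expectation acts as a contraction on the relevant Lorentz/rearrangement spaces. A minor accompanying point is that $X\circ\mathtt p_1\in\mathcal L(\pi)$ whenever $X\in\mathcal L(\prob)$, which is immediate since $\|X\circ\mathtt p_1\|_{\Risk_\pi}$ depends only on the law of $X\circ\mathtt p_1$ under $\pi$, namely $\prob$.
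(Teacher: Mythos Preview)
Your proposal is correct and follows essentially the same route as the paper: both reduce $\evm{\pi}{(X\circ\mathtt p_1)\,Z}$ to $\evm{\prob}{X\,Z_1}$ via the tower property (using that $X\circ\mathtt p_1$ is $\sigma(\mathtt p_1)$-measurable) together with the change of variables $\prob=\pi\circ\mathtt p_1^{-1}$, and then verify that $Z_1=\evm{\pi}{Z\mid\mathtt p_1=\cdot}$ is a nonnegative density. The paper invokes the Doob--Dynkin lemma explicitly to pass from $\evm{\pi}{Z\mid\mathtt p_1}$ to a function on $\mcX$, but otherwise the arguments coincide; your additional remarks about regular conditional expectations and dual-space membership go beyond what the paper itself checks.
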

\begin{proof}
Let $Z \in \mathcal A_\pi$.
By the Doob--Dynkin lemma there exists a measurable function $\psi\colon \mathcal X \to \mathbb R$ such that $ \evm{\pi}{Z\mid \mathtt p_1} = \psi \circ \mathtt p_1$ almost surely.
Thus, we set $Z_1(x) := \evm{\pi}{Z\mid \mathtt p_1 = x} = \psi(x)$ which is a real-valued random variable on $\mathcal X$ whereas $\evm{\pi}{Z\mid \mathtt p_1}$ is a real-valued random variable on $\mathcal X\times \mathcal X$.
By $\prob = \pi \circ \mathtt p_1^{-1}$ and the very definition of conditional expectation we have
\begin{align*}
	\evm{\prob}{Z_1}
	&
	=
	\evm{(\mathtt p_1)_*\pi}{\psi}
	= 
	\evm{\pi}{\psi(\mathtt p_1)}
	= 
	\evm{\pi}{\evm{\pi}{Z\mid \mathtt p_1}}
	=
	\evm{\pi}{Z}
	=
	1.
\end{align*}
Moreover, $Z_1 =  \evm{\pi}{Z\mid \mathtt p_1 = \cdot}\geq0$ almost surely, since $Z\geq0$ almost surely. 
Furthermore, as $X\circ \mathtt p_1$ is measurable with respect to $\sigma(\mathtt p_1)$, we obtain
\begin{align*}
	\evm{\prob}{X\cdot Z_1}
	& = \evm{(\mathtt p_1)_*\pi}{X\cdot \psi}
	= \evm{\pi}{(X\circ \mathtt p_1) \cdot \evm{\pi}{Z\mid \mathtt p_1}}
	= \evm{\pi}{\evm{\pi}{(X\circ \mathtt p_1) \cdot Z\mid \mathtt p_1}}\\
	& = \evm{\pi}{(X\circ \mathtt p_1) \cdot Z}
\end{align*}
Set $\mathcal{A_\prob} =\left\{ \evm{\pi}{Z\mid \mathtt p_1 = \cdot}: Z \in \mathcal{A}_\pi\right\}$ to obtain the assertion of the theorem.
\end{proof}

Exploiting the connection of $\Risk_\prob$ and $\Risk_\probalt$ via $\Risk_\pi$, we obtain the following sensitivity result.

\begin{theorem}[Sensitivity with respect to the probability measure] \label{thm:12}
	Let $\prob$, $\probalt \in \mc P_p(\mc X)$ and $\pi \in \Pi(\prob, \probalt)$.
	Further, assume that the mapping $X\colon \mc X \to \mathbb R$ is H\"older-continuous with exponent $\beta\in(0,1]$, i.e., $|X(x)-X(y)|\le C\cdot d_{\mc X}(x,y)^{\beta}$.
	Then
	\begin{equation}\label{eq:16}
		\left|\Risk_\prob(X) - \Risk_\probalt (X)\right|
		\le 
		C \ \sup_{Z\in\mathcal{A}_\pi} \evm{\pi}{Z^{p_\beta}}^{1/{p_\beta}}\  \evm{\pi}{d_{\mc X}^{\beta\, p}}^{1/p},
	\end{equation}
	where $p_{\beta}:=\frac p{p-\beta}$. 
	In particular, if $(\mc X,d_{\mc X})$ is complete and separable, choosing the optimal coupling $\pi^* \in \Pi(\prob, \probalt)$ for $d_p(\prob, \probalt)$, yields 
	\begin{equation}\label{eq:17}
		\left|\Risk_\prob(X)-\Risk_\probalt(X)\right|
		\le
		C \cdot  \sup_{Z \in {\mathcal A}_{\pi^*}} 
		\evm{\pi^*}{Z^{p_\beta}}^{1/{p_\beta}}\cdot d_p(\prob, \probalt)^{\beta}.
	\end{equation}
\end{theorem}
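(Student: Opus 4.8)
The plan is to realise both $\Risk_\prob(X)$ and $\Risk_\probalt(X)$ as suprema over \emph{one and the same} family of densities $\mathcal A_\pi$ on the product space $\mc X\times\mc X$, as prepared in~\eqref{eq:RP} and~\eqref{eq:RQ}, and then to compare the two suprema by transferring a near-optimal density from one to the other. Fix any coupling $\pi\in\Pi(\prob,\probalt)$. Given $\varepsilon>0$ pick $Z^\ast\in\mathcal A_\pi$ with $\evm{\pi}{Z^\ast\,(X\circ\mathtt p_1)}\ge\Risk_\prob(X)-\varepsilon$; since $Z^\ast$ is also admissible in~\eqref{eq:RQ} one has $\Risk_\probalt(X)\ge\evm{\pi}{Z^\ast\,(X\circ\mathtt p_2)}$, whence $\Risk_\prob(X)-\Risk_\probalt(X)\le\evm{\pi}{Z^\ast\,(X\circ\mathtt p_1-X\circ\mathtt p_2)}+\varepsilon\le\evm{\pi}{Z^\ast\,|X\circ\mathtt p_1-X\circ\mathtt p_2|}+\varepsilon$, using $Z^\ast\ge0$. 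Letting $\varepsilon\to0$ and running the symmetric argument with $\prob$ and $\probalt$ interchanged yields
\[
	\bigl|\Risk_\prob(X)-\Risk_\probalt(X)\bigr|
	\le
	\sup_{Z\in\mathcal A_\pi}\evm{\pi}{Z\,\bigl|X\circ\mathtt p_1-X\circ\mathtt p_2\bigr|}.
\]
(If $\Risk=\esssup$, so $\mathcal A_\pi$ exhausts $\{Z\ge0\colon\evm{\pi}{Z}=1\}$, then $\sup_{Z}\evm{\pi}{Z^{p_\beta}}^{1/p_\beta}=\infty$ and the asserted estimate is vacuous.)

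Next I would feed in the H\"older continuity of $X$ pointwise, $\bigl|X\circ\mathtt p_1(x,y)-X\circ\mathtt p_2(x,y)\bigr|=|X(x)-X(y)|\le C\,d_{\mc X}(x,y)^\beta$, which turns the right-hand side into $C\sup_{Z\in\mathcal A_\pi}\evm{\pi}{Z\,d_{\mc X}^\beta}$, and then apply H\"older's inequality to the product $Z\cdot d_{\mc X}^\beta$ with the conjugate exponent pair $\bigl(p_\beta,\tfrac{p}{\beta}\bigr)$, $p_\beta=\tfrac{p}{p-\beta}$ (admissible since $\beta\le1\le p$ forces $p_\beta\ge1$ and $\tfrac{p}{\beta}\ge1$). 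This separates the density factor $\evm{\pi}{Z^{p_\beta}}^{1/p_\beta}$ from a $d_{\mc X}$-transport-cost factor of the coupling, producing~\eqref{eq:16}; all integrals involved are finite because $\prob,\probalt\in\mc P_p(\mc X)$ and $X$ is globally H\"older.

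For~\eqref{eq:17} one then specialises~\eqref{eq:16} to the coupling $\pi^\ast\in\Pi(\prob,\probalt)$ that is optimal for $d_p(\prob,\probalt)$ — this exists since $(\mc X,d_{\mc X})$ is complete and separable, cf.\ \cite[Chapter~4]{Villani2009}. For $\pi^\ast$ the transport-cost factor collapses to $d_p(\prob,\probalt)^\beta$, using $\evm{\pi^\ast}{d_{\mc X}^p}^{1/p}=d_p(\prob,\probalt)$ (together with the power-mean inequality $\evm{\pi^\ast}{d_{\mc X}^{\beta p}}^{1/(\beta p)}\le\evm{\pi^\ast}{d_{\mc X}^p}^{1/p}$, valid for $\beta\le1$, if the bound is kept in the $\beta p$-normalisation of~\eqref{eq:16}), while the density factor $\sup_{Z\in\mathcal A_{\pi^\ast}}\evm{\pi^\ast}{Z^{p_\beta}}^{1/p_\beta}$ is carried along unchanged.

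I expect the main obstacle to be conceptual rather than computational: it lies in having in hand a single product-space risk functional $\Risk_\pi$ whose two marginal specialisations~\eqref{eq:RP}--\eqref{eq:RQ} reproduce $\Risk_\prob$ and $\Risk_\probalt$ while sharing the \emph{common} support set $\mathcal A_\pi$ — precisely the marginalisation construction developed just above (and in Appendix~\ref{app:Risk}). Without this shared index set the two suprema would range over different families of densities and the density-transfer step underlying the first displayed inequality would break down. The remaining ingredients — the $\varepsilon$-optimality argument, nonnegativity of the densities, the pointwise H\"older estimate, and the single use of H\"older's inequality — are routine.
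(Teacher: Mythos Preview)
Your proof is correct and follows essentially the same route as the paper: both exploit that $\Risk_\prob(X)$ and $\Risk_\probalt(X)$ are suprema over the \emph{common} support set $\mathcal A_\pi$, bound the difference by $\sup_{Z\in\mathcal A_\pi}\evm{\pi}{Z\,d_{\mc X}^\beta}$ via the H\"older assumption on $X$, and then apply H\"older's inequality with the conjugate pair $(p_\beta,p/\beta)$. The only cosmetic difference is that the paper substitutes the optimal $Z$ directly (relying on weak* closedness of $\mathcal A_\pi$ so the supremum is attained) whereas you use an $\varepsilon$-near-optimal density; your version is slightly more robust in that it does not require attainment.
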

\begin{proof}
Let $Z\in\mathcal{A}_\pi$ be fixed. 
Note that
\begin{align*}
	\ev{Z\cdot X\circ \mathtt p_1}-\ev{Z\cdot X\circ \mathtt p_2} 
	&=
	\iint_{\mathcal X\times\mathcal X} Z(x,y)\cdot X(x)-Z(x,y)\cdot X(y)\,\pi(\mathrm dx,\mathrm dy)\\
	&=
	\iint_{\mathcal X\times\mathcal X} Z(x,y)\cdot\big(X(x)-X(y)\big)\,\pi(\mathrm dx,\mathrm dy)\\
	&\le 
	C\iint_{\mathcal X\times\mathcal X} d_{\mc X}(x,y)^{\beta}\cdot Z(x,y)\,\pi(\mathrm dx,\mathrm dy),
\end{align*}
as $Z\ge0$ $\pi$\nobreakdash-a.s\@. 
Taking the supremum among all $Z\in\mathcal{A}_\pi$ on the right hand side gives
\[
	\ev{Z\cdot X\circ \mathtt p_1}-\ev{Z\cdot X\circ \mathtt p_2} 
	\le 
	C \cdot \Risk_\pi\big(d_{\mc X}^{\beta}\big).
\]
Employing the definition~\eqref{eq:RP} and the optimal $Z$ there it follows that
\[
	\Risk_\prob(X)-\ev{Z\cdot X\circ \mathtt p_2}
	\le 
	C \cdot \Risk_\pi\!\big(d_{\mc X}^{\beta}\big),
\]
and now with~\eqref{eq:RQ} further that
\begin{equation} \label{eq:No}
	\Risk_\prob(X) - \Risk_\probalt(X)
	\le 
	C \cdot \Risk_\pi\!\big(d_{\mc X}^{\beta}\big).
\end{equation}
Furthermore, note that $\nicefrac p{\beta}$ and $p_{\beta}$ are H\"older conjugate exponents, $\frac1{p_{\beta}}+\frac1{\nicefrac p{\beta}}=1$.
With H\"older's inequality we thus obtain
\[
	\Risk_\pi\big(d^{\beta}\big)
	=
	\sup_{Z\in\mathcal{A}_\pi}\evm{\pi}{Z\cdot d_{\mc X}^{\beta}}
	\le
	\sup_{Z\in\mathcal{A}_\pi} 
	\evm{\pi}{Z^{p_\beta}}^{1/{p_\beta}} \evm{\pi}{d_{\mc X}^{\beta p}}^{1/p}.
\]
Interchanging the roles of $\prob$ and $\probalt$ yields the absolute value in~\eqref{eq:No} and therefore~\eqref{eq:16}.
The statement~\eqref{eq:17} then follows from~\eqref{eq:16} by noting that
\[
		\evm{\pi^*}{d_{\mc X}^{\beta p}}^{1/p}
		=
		d_{\beta p}(\prob,\probalt)^{\beta}
		\leq 
		d_{p}(\prob,\probalt)^{\beta}.
\]
\end{proof}
Theorem~\ref{thm:12} involves the supremum $\sup_{Z\in\mathcal{A}_\pi} \left\Vert Z\right\Vert _{\pi, p_{\beta}}$. This quantity is indeed finite for many important risk measures and in what follows we give explicit expressions for this bound:
\begin{itemize}
	\item 
	The optimal random variable $Z^*$ for the average value-at-risk satisfies $\pi\left(Z^*=\frac1{1-\alpha}\right) = 1-\alpha$ and $\pi(Z=0)=\alpha$, so that $\evm{\pi}{Z^{q}}^{1/{q}} \leq \left(\frac1{1-\alpha}\right)^{1-\frac1q} \leq \frac1{1-\alpha}$ for any $q\ge 1$. 
	\item 
	For the spectral risk functional it follows from~\eqref{eq:Spectral} that it is enough to require $\sigma\in L^{p_\beta}([0,1])$, as 
	$\evm{\pi}{Z^{q}}^{1/{q}} = \|\sigma\|_{L^q([0,1])}$.
	\item 
	Bounds for the entropic risk functional are elaborated in 
	Ahmadi-Javid and Pichler	
	\cite{AhmadiPichler} so that $\evm{\pi}{Z^{q}}^{1/{q}} \le \max\left(1,\ \frac{q-1}{\log\frac1{1-\alpha}}\right)$ in this case.
\end{itemize}

We see that for these risk measures the supremum $\sup_{Z\in\mathcal{A}_\pi} \left\Vert Z\right\Vert _{\pi, p_{\beta}}$ can be bounded independently of the coupling $\pi$.
We can then insert the corresponding bound into~\eqref{eq:16} and take the infimum over any coupling $\pi \in \Pi(\prob,\probalt)$ on the right-hand side.
Thus, for these risk measures and H\"older-continuous $X\colon \mathcal X \to \mathbb R$ with H\"older-exponent $\beta\in(0,1]$ there exists a constant $C = C(p, \beta, \Risk)<\infty$ such that
\begin{equation}\label{eq:18}
	\left| \Risk_\prob(X) -\Risk_\probalt(X) \right|
	\le
	C\, d_p(\prob, \probalt)^{\beta}
\end{equation}
for the case of a general (e.g., nonseparable or incomplete) metric space $(\mc X,d_{\mc X})$. 

\begin{remark} \label{rem:LocalLipQoI}
	We note that Theorem~\ref{thm:12} can be extended to only locally H\"older $X\colon \mc X\to\mathbb{R}$, i.e., $X$ for which there exists a nondecreasing $C_X\colon [0,\infty) \to [0,\infty)$ such that $|X(x) - X(y)| \leq C_X(r) \, d_{\mc X}(x,y)^\beta$ for all $x,y$ belonging to a ball of radius $r>0$ around a reference center $x_0 \in \mc X$.
	If $C_X$ then satisfies an integrability condition w.r.t.~$\prob$ and $\probalt$ similar to~\eqref{eq:Lip_cond_meas}, we can modify Theorem~\ref{thm:12} accordingly, following the main ideas in the proof of Theorem~\ref{theo:LocLip}.
\end{remark}

\subsection{Application to Random PDEs} 
We can now combine Theorem~\ref{thm:12} with the results from Section~\ref{sec:UQ_sens} on the Wasserstein sensitivity of the solution of random PDEs~\eqref{eq:randomPDE} to obtain sensitivity results for risk functionals of Lipschitz continuous quantities of interest applied to the random solution $u$ of~\eqref{eq:randomPDE}.
In particular, by the discussion following Theorem~\ref{thm:12} we obtain the next result.

\begin{corollary}\label{thm:UQ}
	Let 
	$\phi\colon \mc U\to\mathbb{R}$ have Lipschitz constant $C_\phi$ (cf.~\eqref{eq:Phi}) and let $\Risk$ denote a spectral risk functional or the entropic value-at-risk.  
	Then the following hold:
	\begin{enumerate}
		\item
		If $\prob,\probalt \in \mathcal P_p(\mathcal X)$, $p\geq 1$, and $\map\colon \mathcal X \to \mathcal U$ satisfies Assumption~\ref{assum:Lip}, then
		\begin{equation*}
			\left|\Risk_\prob(\phi \circ \mathcal S)
				- \Risk_\probalt(\phi \circ \mathcal S)\right|
			\le 
			C_{\Risk}\, C_\phi\, C_{\mc S} \, d_p(\prob,\probalt)^\beta
		\end{equation*}
		with a constant $C_{\Risk} < +\infty$ depending only on $p$, $\beta$, and $\Risk$.
		\item
		If $\map\colon \mcX \to \mc U$ satisfies Assumption~\ref{assum:LocLip} and $\prob, \probalt \in \mc P_{2p}(\mc X)$, $p\geq 1$, satisfy~\eqref{eq:Lip_cond_meas}, then
		\begin{equation*}
			\left|\Risk_\prob(\phi \circ \mathcal S)
				- \Risk_\probalt(\phi \circ \mathcal S)\right|
			\le 
			2C_{\Risk}\, C_\phi\, C^{\nicefrac 1{(2p)}} \cdot d_{2p}(\prob,\probalt)^\beta
		\end{equation*}
		with a constant $C_{\Risk} < +\infty$ depending only on $p$, $\beta$, and $\Risk$
		and $C$ as in~\eqref{eq:Lip_cond_meas}.
	\end{enumerate}
\end{corollary}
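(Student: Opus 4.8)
The plan is to regard $\phi\circ\map\colon\mcX\to\mathbb R$ as a single forward map, identify its (local or global) Hölder data, and feed this into Theorem~\ref{thm:12}; in the locally Hölder case, the fact that a spectral risk functional and the entropic value-at-risk are law-invariant additionally lets one route everything through the pushforward bound of Theorem~\ref{theo:LocLip}. The only bookkeeping needed is that a globally Lipschitz $\phi$ composed with a (locally) Hölder $\map$ is again (locally) Hölder with the \emph{same} exponent $\beta$ and with constant that of $\map$ multiplied by $C_\phi$, so the moment hypothesis~\eqref{eq:Lip_cond_meas} transfers up to a harmless factor $C_\phi^{2p}$.

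\emph{Part~(i).} Under Assumption~\ref{assum:Lip},
\[
	\bigl|(\phi\circ\map)(x_1)-(\phi\circ\map)(x_2)\bigr|
	\le C_\phi\,d_{\mc U}\bigl(\map(x_1),\map(x_2)\bigr)
	\le C_\phi C_{\map}\,d_{\mcX}(x_1,x_2)^\beta ,
\]
so $\phi\circ\map$ is globally Hölder with exponent $\beta$ and constant $C_\phi C_{\map}$. Applying Theorem~\ref{thm:12} with $X=\phi\circ\map$, together with the uniform-in-$\pi$ bounds on $\sup_{Z\in\mathcal A_\pi}\evm{\pi}{Z^{p_\beta}}^{1/p_\beta}$ valid for spectral and entropic risk functionals (listed after that theorem and depending only on $p$, $\beta$ and $\Risk$), estimate~\eqref{eq:18} yields $\bigl|\Risk_\prob(\phi\circ\map)-\Risk_\probalt(\phi\circ\map)\bigr|\le C_{\Risk}\,C_\phi C_{\map}\,d_p(\prob,\probalt)^\beta$, which is the claim.

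\emph{Part~(ii).} Under Assumption~\ref{assum:LocLip}, for $x_1,x_2\in B_r(x_0)$ one has $\bigl|(\phi\circ\map)(x_1)-(\phi\circ\map)(x_2)\bigr|\le C_\phi C_{\map}(r)\,d_{\mcX}(x_1,x_2)^\beta$, so $\phi\circ\map$ is locally Hölder with local constant $r\mapsto C_\phi C_{\map}(r)$, and~\eqref{eq:Lip_cond_meas} gives $\int_{\mcX}\bigl(C_\phi C_{\map}(d_{\mcX}(x,x_0))\bigr)^{2p}\,\prob(\mathrm dx)\le C_\phi^{2p}C$, and likewise for $\probalt$. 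I would then close the argument through pushforwards: Theorem~\ref{theo:LocLip} gives $d_p(\map_*\prob,\map_*\probalt)\le 2C^{1/(2p)}d_{2\beta p}(\prob,\probalt)^\beta$; the global Lipschitz bound for $\phi$ (Theorem~\ref{theo:Lip} with exponent $1$) gives $d_p\bigl((\phi\circ\map)_*\prob,(\phi\circ\map)_*\probalt\bigr)\le C_\phi\,d_p(\map_*\prob,\map_*\probalt)$; and applying~\eqref{eq:18} to the identity map $\mathrm{id}$ on $\mathbb R$, then using law-invariance to identify $\Risk_\prob(\phi\circ\map)$ with the risk of $\mathrm{id}$ under the law $(\phi\circ\map)_*\prob$, we obtain
\[
	\bigl|\Risk_\prob(\phi\circ\map)-\Risk_\probalt(\phi\circ\map)\bigr|
	\le 2C_{\Risk}C_\phi C^{1/(2p)}\,d_{2\beta p}(\prob,\probalt)^\beta
	\le 2C_{\Risk}C_\phi C^{1/(2p)}\,d_{2p}(\prob,\probalt)^\beta ,
\]
the last step by Jensen's inequality, since $2\beta p\le 2p$. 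Equivalently, one may run the locally Hölder extension of Theorem~\ref{thm:12} flagged in Remark~\ref{rem:LocalLipQoI} directly on $X=\phi\circ\map$, bounding $\Risk_\pi$ of the function $(x,y)\mapsto C_\phi C_{\map}\bigl(d_{\mcX}(x,x_0)\lor d_{\mcX}(y,x_0)\bigr)\,d_{\mcX}(x,y)^\beta$ on $\mcX\times\mcX$ by peeling off the growing local constant with Cauchy--Schwarz as in the proof of Theorem~\ref{theo:LocLip} and the density $Z$ with Hölder's inequality as in Theorem~\ref{thm:12}.

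The main obstacle is the exponent bookkeeping in part~(ii): one must keep the pairing ``$2p$ for the $C_{\map}$-moment and for the density $Z$, $2\beta p$ for the transport cost'' so that exactly the hypotheses $\prob,\probalt\in\mc P_{2p}(\mcX)$ and~\eqref{eq:Lip_cond_meas} are consumed, and one must verify that the Hölder conjugate exponent governing $Z$ stays finite so that the uniform density bound $C_\Risk$ makes sense; this is immediate for a spectral risk functional and the average value-at-risk, and holds for the entropic value-at-risk as soon as $p>\beta$ (the only borderline exclusion being $p=\beta=1$). The remaining steps --- the transfer of the moment bound through the Lipschitz map $\phi$, the composition of Hölder maps, and the Jensen step $d_{2\beta p}\le d_{2p}$ --- are routine.
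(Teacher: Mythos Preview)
Your proposal is correct and follows the paper's own approach: the paper does not give an explicit proof but states that the corollary results from combining Theorem~\ref{thm:12} (and the discussion yielding~\eqref{eq:18}) with the pushforward estimates of Section~\ref{sec:UQ_sens}, which is precisely what you carry out in detail---applying Theorem~\ref{thm:12} directly to the globally H\"older composite $\phi\circ\map$ in part~(i), and in part~(ii) chaining Theorem~\ref{theo:LocLip} with~\eqref{eq:18} applied on the pushforward side (using law-invariance), or equivalently invoking the locally H\"older extension of Remark~\ref{rem:LocalLipQoI}. Your caveat about the borderline case $p=\beta=1$ for the entropic value-at-risk is a valid observation that the paper leaves implicit.
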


Obviously, Corollary~\ref{thm:UQ} can be extended to general risk functionals given that $(\mc X, d_{\mc X})$ is a complete and separable metric space and by replacing $C_{\Risk}$ with $\sup_{Z\in\mathcal{A}_{\pi^*}} \left\Vert Z\right\Vert _{\pi^*, p_{\beta}}$ as occuring in~\eqref{eq:17}.
Besides that one can also weaken the assumptions on $\phi$ to being only H\"older continuous or even locally H\"older continuous (following Remark~\ref{rem:LocalLipQoI}).
However, then also the Wasserstein distances $d_p(\prob,\probalt)^\beta$ and $d_p(\prob,\probalt)^\beta$ on the right-hand side have to be adapted accordingly.

\paragraph{The elliptic problem~\eqref{eq:BVP}}
We apply now Corollary~\ref{thm:UQ} to the solution operator of the elliptic problem~\eqref{eq:BVP} 
and combine this with the results in Section~\ref{sec:StrawberryFields}.
To this end, let $\log a$ be as in Proposition~\ref{theo:Trunc_Banach} with almost surely bounded $\xi_k$ and let
\[
	f = f_0 + \sum_{k=1}^\infty \widetilde \sigma_{k}\,\widetilde{\xi}_{k}\,f_{k}
\]
with uncorrelated mean-zero $\widetilde \xi_k$, $\var[\widetilde\xi_k]=1$, which are stochastically independent of the $\xi_k$.
Then, for any Lipschitz continuous $\phi\colon H_0^1(D)\to \bbR$ with Lipschitz constant $C_\phi$ and $\Risk$ a spectral risk functional or the entropic value-at-risk, we have for the distributions $\prob$, $\prob_K \in \mc P(L_+^\infty(D) \times L^2(D))$ of $(a, f)$ and $(a_K, f_K)$, respectively, that
\begin{align}
	\left|\Risk_{\prob}(\phi\circ\mathcal{S}) - \Risk_{\prob_K}(\phi\circ\mathcal{S})\right|
	\leq 
	2C_{\Risk}\, C_\phi\, C^{\nicefrac 1{(2p)}}
	\sqrt{
	\left(\sum_{k>K} |\sigma_k| \|a_k\|_{L^\infty(D)} \right)^2 
	+ 
	\sum_{k>K} \widetilde \sigma_{k}^2 \|f_{k}\|^2_{L^2(D)} 
	}
\end{align}
with $C_{\Risk}$, $C < +\infty$.
This truncation result extends to unbounded random coefficients $\xi_k$, i.e., $\xi_k \sim \mc N(0,1)$, provided estimates as referred to in Remark~\ref{rem:Charrier} are available.

\section{Summary} \label{sec:Summary}

We have derived stability results for uncertainty propagation for a stationary diffusion problem with random coefficient and source functions against perturbation of their probability distribution.
This result generalizes to the pushforward of probability measures under locally Hölder mappings and can therefore be applied to any differential equation with such a solution operator.
Quantities of interest deriving from the solution of such a random PDE inherit its randomness, and we have employed risk functionals to quantify the impact of their random behavior.
In order to capture deviations in (input) probability measures we used the Wasserstein distance which has a natural relation to Lipschitz continuous forward maps in view of the Kantorovich--Rubinstein theorem.
However, the solution operator of the problem considered is merely locally, not globally Lipschitz.
Employing new results on locally Hölder forward maps we were able to bound the deviations in the random solution and in risk functionals of derived quantities of interest. 
We applied our analysis to the common case of lognormal diffusion with a Mat\'ern covariance kernel exploiting some classical boundedness results on Gaussian processes as well as to the usual approximation of input random fields by truncated series expansions.

With these basic stability results for uncertainty propagation, one can apply the presented analysis to other PDE models with locally Hölder solution operators using the general results of Section~\ref{sec:Wasserstein_Sens}.
Moreover, for practical purposes it is helpful to have estimates on the Wasserstein distance of the input distributions, e.g., to be able to bound the distance of Gaussian random fields with different Mat\'ern covariance kernels by the difference in the Mat\'ern parameters.
In this way, the effects of the statistical estimation of these parameters on the outcome of uncertainty propagation for lognormal diffusion could be evaluated using our results.

\appendix

\section{Proof of Theorem~\ref{theo:lognormal}} \label{app:GRF}
In what follows we provide several lemmas which we combine to prove Theorem~\ref{theo:lognormal}, see the very end of this appendix. 
However, the argumentation and the statements of the lemmas are deliberately presented for arbitrary continuous Gaussian random fields, and we focus on Mat\'ern covariance functions only in the last part.
In particular, we consider pathwise continuous Gaussian random fields $g\colon D \times \Omega$ with continuous mean function $m(\cdot) := \ev{g(\cdot)} \in C(D)$ and continuous covariance function $c\in C(D\times D)$, $c(x,y):= \cov\big(g(x),g(y)\big)$. 
We denote the resulting Gaussian distribution on $C(D)$ of such a Gaussian random field by $\mathcal N(m,c) \in \mathcal P(C(D))$.

We study now the following question: for which sets $\mathcal M \subseteq C(D)$ and $\mathcal C \subseteq C(D\times D)$ of continuous mean and covariance functions can we ensure that for a given $\beta>0$ we have
\[
	\sup_{\prob \in \mc G(\mc M, \mc C)} \int \exp(\beta\,\|g\|_{C(D)}) \ \mathrm dP < \infty,
	\qquad
	\mathcal G(\mathcal M, \mathcal C)
	\coloneqq \left\{\mathcal N(m,c)\colon m \in \mathcal M, c \in \mathcal C \right\}\ ?
\]
By \emph{Fernique's theorem} we know that each single exponential moment above exists for arbitrary $\beta >0$.
However, the uniform boundedness over a given set $\mc G(\mc M, \mc C)$ is harder to ensure.
To this end, we apply the well-known \emph{Borell-TIS inequality} for Gaussian measures on Banach spaces \cite[Chapter~3]{LedouxTalagrand2002}: let~$g$ be a centered Gaussian process on a compact set $D \subseteq\mathbb R^d$ which is pathwise continous, then $\ev{\|g\|_{C(D)}}<\infty$ and with $\prob$ denoting its distribution on $C(D)$ we have
\begin{equation}\label{equ:Borell}
	\prob \left( \left| \|g\|_{C(D)} - \ev{\|g\|_{C(D)}} \right| \geq r\right) \leq 2 \exp(-r^2/2\sigma^2),
\end{equation}
where $\sigma^2 := \sup_{x\in D} \ev{g^2(x)} = \sup_{x\in D} \var(g(x))$.
This yields the following result.
\begin{lemma}\label{lem:ExpMomGauss}
Let $\mathcal G = \mathcal G(\mathcal M, \mathcal C)$, where $\mathcal M \subseteq C(D)$ and $\mathcal C \subseteq C(D\times D)$, satisfy the following conditions:
	\begin{enumerate}
		\item The sets $\mc M$ and $\mc C$ are bounded, i.e., there exist radii $r_m$, $r_c < \infty$ such that $\|m\|_{C(D)} \leq r_m$ for all $m \in \mathcal M$ and $\|c\|_{C(D\times D)} \leq r_c$ for all $c \in \mathcal C$.
		
		\item There exists a constant $s < \infty$ such that $\evm{\prob}{\|g\|_{C(D)}} \leq s$ for all $\prob \in \mathcal G(\{0\}, \mathcal C)$.
	\end{enumerate}
	Then, for any $0<\beta<\infty$, there exists a constant $K_\beta = K_\beta(r_m,r_c,s)<\infty$ such that
	\[
		\evm{\prob}{\exp(\beta \|g\|_{C(D)})}
		\leq
		K_\beta,
		\qquad
		\forall P \in \mathcal G(\mc M, \mc C).
	\]
\end{lemma}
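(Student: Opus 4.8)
The plan is to reduce the statement to the centered case and then bootstrap the Borell--TIS inequality~\eqref{equ:Borell} into a uniform exponential moment bound. Given $g \sim \mathcal N(m,c)$ with $m \in \mathcal M$, $c \in \mathcal C$, I would write $g_0 := g - m$, so that $g_0 \sim \mathcal N(0,c) \in \mathcal G(\{0\},\mathcal C)$ is a pathwise continuous \emph{centered} Gaussian field with the same covariance. The triangle inequality in $C(D)$ together with boundedness of $\mathcal M$ gives $\|g\|_{C(D)} \le \|m\|_{C(D)} + \|g_0\|_{C(D)} \le r_m + \|g_0\|_{C(D)}$, hence
\[
	\evm{\prob}{\exp\!\big(\beta\|g\|_{C(D)}\big)} \le e^{\beta r_m}\,\evm{\prob_0}{\exp\!\big(\beta\|g_0\|_{C(D)}\big)},
	\qquad \prob_0 := \mathcal N(0,c).
\]
It therefore suffices to bound $\evm{\prob_0}{\exp(\beta\|g_0\|_{C(D)})}$ uniformly over the centered family $\mathcal G(\{0\},\mathcal C)$.

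For the centered estimate I would abbreviate $Y := \|g_0\|_{C(D)}$ and $N := \evm{\prob_0}{Y}$, noting $N \le s$ by hypothesis~(ii). Since the pointwise variance satisfies $\sigma^2 := \sup_{x\in D}\var(g_0(x)) = \sup_{x\in D} c(x,x) \le \|c\|_{C(D\times D)} \le r_c$ (the case $r_c=0$ being trivial, as then $g_0\equiv0$), the Borell--TIS inequality~\eqref{equ:Borell} yields $\prob_0(Y \ge N + r) \le 2\exp(-r^2/(2r_c))$ for all $r\ge0$. Then, using the layer-cake identity $\evm{\prob_0}{e^{\beta(Y-N)_+}} = 1 + \beta\int_0^\infty e^{\beta r}\,\prob_0\big((Y-N)_+ > r\big)\,\mathrm dr$ and $e^{\beta(Y-N)} \le e^{\beta(Y-N)_+}$, I obtain
\[
	\evm{\prob_0}{e^{\beta Y}} = e^{\beta N}\,\evm{\prob_0}{e^{\beta(Y-N)}} \le e^{\beta s}\left(1 + 2\beta\int_0^\infty e^{\beta r - r^2/(2r_c)}\,\mathrm dr\right).
\]

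The remaining integral converges: completing the square gives $\int_0^\infty e^{\beta r - r^2/(2r_c)}\,\mathrm dr \le e^{\beta^2 r_c/2}\sqrt{2\pi r_c}$, so the bracketed quantity is a finite constant $M_\beta = M_\beta(r_c,s)$ depending only on $\beta$, $r_c$, $s$. Combining with the first step yields the assertion with $K_\beta(r_m,r_c,s) := e^{\beta r_m}M_\beta(r_c,s) < \infty$, uniformly over $\prob \in \mathcal G(\mathcal M,\mathcal C)$. I do not expect a genuine obstacle here; the one point requiring care is \emph{uniformity} of the constants --- but every bound invoked (on $\|m\|_{C(D)}$ via~(i), on $\sigma^2$ via boundedness of $\mathcal C$, on $N$ via~(ii)) is uniform by hypothesis, and the Borell--TIS constant and the Gaussian integral depend only on $\beta$ and $r_c$. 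The real work of the appendix is the subsequent verification of hypothesis~(ii) for the Matérn family.
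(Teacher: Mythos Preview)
Your proof is correct and follows essentially the same approach as the paper: reduce to the centered case via the triangle inequality, bound the pointwise variance by $r_c$, and apply the Borell--TIS inequality to control the tail of $\|g_0\|_{C(D)}$ about its mean. The only cosmetic difference is that the paper uses a discrete layer-cake (summing $e^{\beta(n+1)}\prob_0(Y\ge N+n)$ over integers $n$) whereas you use the continuous layer-cake and complete the square; your version is, if anything, slightly cleaner.
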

\begin{proof}
Consider first an arbitrary $\prob = \mathcal N(m,c) \in \mathcal G(\mathcal M, \mathcal C)$.
Then we have
\[
	\evm{\prob}{\exp(\beta \|g\|_{C(D)})}
	\leq
	\exp(\beta r_m)\
	\evm{\prob}{\exp(\beta\, \|b-m\|_{C(D)})}.
\]
Thus, we may restrict ourselves in the following to an arbitrary centered $\prob = \mathcal N(0,c) \in \mathcal G(\{0\}, \mathcal C)$.
For convenience we set $M_\prob := \evm{\prob}{\|g\|_{C(D)}}$ and obtain by the second assumption and the Borell-TIS inequality
\[
	\prob\left(\{g\in C(D)\colon \left| \|g\|_{C(D)} - M_\prob \right| \geq r\}\right) \leq 2 \exp(-r^2/2r_c).
\]
This implies that
\[
	\prob\left(\{g\in C(D)\colon \|g\|_{C(D)} \geq M_\prob + r \}\right) \leq 2 \exp(-r^2/2r_c)
\]
which can now be used as follows:
\begin{align*}
	\evm{\prob}{\exp(\beta \|g\|_{C(D)})}
	& \leq
	\exp(\beta M_\prob) + \int_{g\colon \|g\|_{C(D)} \geq M_\prob} \exp(\beta \|g\|_{C(D)}) \ \mathrm d\prob\\
	& \leq
	\exp(\beta M_\prob) + \sum_{n=0}^\infty \e^{\beta (M_\prob+n+1)} \ \prob(\{g\in C(D)\colon n \leq \|g\|_{C(D)} - M_\prob < n+1\})\\
	& \leq
	\exp(\beta M_\prob) \left( 1 + \sum_{n=0}^\infty \e^{\beta (n+1)} \prob(\{g\in C(D)\colon M_\prob + n \leq \|g\|_{C(D)}\}) \right)\\
	& \leq
	\e^{\beta s} \left( 1 + 2\sum_{n=0}^\infty \e^{\beta (n+1) - n^2/2r_c } \right) < \infty,
\end{align*}
which proves the assertion.
\end{proof}

Actually, Fernique's theorem can be proven similarly to Lemma~\ref{lem:ExpMomGauss}. 
It remains to ensure a uniform bound of $\evm{\prob}{\|g(x)\|_{C(D)}}$, $\prob \in \mc G(\{0\}, \mc C)$ by imposing suitable conditions on $\mc C$.
Studying $\ev{\sup_{x\in D} g(x)}$ has a long history in probability theory. 
We refer to Talagrand \cite{Talagrand2006} for a comprehensive discussion and exploit \emph{Dudley's entropy bound}, a classical result.

Consider a centered Gaussian process $g$ with distribution $\prob = \mathcal N(0,c)$ on $C(D)$ and assume a metric $d\colon D\times D\to[0,\infty)$ satisfying
\begin{equation}\label{equ:Dudley_cond}
	\forall x,y \in D \ \forall r > 0\colon \prob(|g(x)-g(y)| \geq r) \leq 2 \exp\left(- \frac{r^2}{2d^2(x,y)}\right).
\end{equation}
For Gaussian processes such a metric is, for instance, given by
\[
	d_c(x,y) := \left(\ev{|g(x)-g(y)|^2} \right)^{1/2} = \sqrt{\var(g(x)-g(y) )} = \sqrt{c(x,x) + c(y,y) - 2c(x,y)}.
\]
Now, \emph{Dudley's entropy bound} \cite{Talagrand2006} for Gaussian processes $g$ with distribution $\prob = \mathcal N(0,c) \in \mc P(C(D))$ states that if~\eqref{equ:Dudley_cond} holds for a metric $d$, then we have
\begin{equation}\label{equ:Dudley_bound}
	\evm{\prob}{\|g\|_{C(D)}} \leq K \int_0^\infty \sqrt{\log \mc N(D,d_c,r)}\,\mathrm dr,
\end{equation}
where $K<\infty$ denotes a (universal) constant and $\mc N(D,d,r)$ are the \emph{covering numbers} of $D$ with respect to the metric $d$, i.e.,
\[
	\mc N(D,d,r)
	:=
	\inf \left\{n \in \mathbb N\colon \exists x_1, \ldots x_n \in D \text{ such that } D \subseteq \bigcup_{j=1}^n B_r^d(x_j)\right\},
\]
where $B_r^d(x) := \{y \in D\colon d(x,y)\leq r\}$ are balls in $D$ of radius $r$ with respect to $d$.
This gives rise to the following lemma.

\begin{lemma}\label{lem:Dudley}
Consider $\mathcal G(\{0\}, \mathcal C)$, $\mc C \subseteq C(D\times D)$.
If there exists a metric $d \colon D\times D \to [0,\infty)$ such that
\[
	c(x,x) + c(y,y) - 2c(x,y) \leq d^2(x,y) \qquad \forall x,y \in D\ \forall c \in \mathcal C 
\]
and
\[
	\int_0^\infty \sqrt{\log \mc N(D,d,r)}\,\mathrm dr < \infty,
\]
then
\[
	\sup_{\prob \in \mathcal G(\{0\}, \mathcal C)} \evm{\prob}{\|g\|_{C(D)}} < \infty.
\]
\end{lemma}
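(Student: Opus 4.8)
The plan is to reduce the claim to a single application of Dudley's entropy bound together with the monotonicity of covering numbers in the underlying metric. First I would fix an arbitrary centered Gaussian field $g$ with distribution $\prob = \mathcal N(0,c) \in \mathcal G(\{0\}, \mathcal C)$ and pass to its canonical pseudometric $d_c(x,y) = \sqrt{c(x,x)+c(y,y)-2c(x,y)} = \big(\evm{\prob}{|g(x)-g(y)|^2}\big)^{1/2}$. Since $g(x)-g(y) \sim \mathcal N\big(0, d_c^2(x,y)\big)$, the standard Gaussian tail estimate shows that $d_c$ satisfies the increment condition~\eqref{equ:Dudley_cond}; hence Dudley's bound applies with the metric $d_c$ and yields
\[
	\evm{\prob}{\|g\|_{C(D)}} \le K \int_0^\infty \sqrt{\log \mc N(D, d_c, r)}\,\mathrm dr .
\]

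Next I would invoke the hypothesis $c(x,x)+c(y,y)-2c(x,y) \le d^2(x,y)$, that is, $d_c(x,y) \le d(x,y)$ for all $x,y \in D$, which holds uniformly over $c \in \mathcal C$. Pointwise domination of the metrics gives $B_r^d(x) \subseteq B_r^{d_c}(x)$ for every $x \in D$ and every $r>0$, so any finite cover of $D$ by $d$-balls of radius $r$ is also a cover by $d_c$-balls of the same radius. Therefore $\mc N(D, d_c, r) \le \mc N(D, d, r)$ for all $r$, and consequently
\[
	\evm{\prob}{\|g\|_{C(D)}} \le K \int_0^\infty \sqrt{\log \mc N(D, d, r)}\,\mathrm dr .
\]
The right-hand side is finite by assumption and, crucially, does not depend on $c$. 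Taking the supremum over all $\prob \in \mathcal G(\{0\}, \mathcal C)$ then proves the lemma.

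Every step here is a direct consequence of a quoted result or of an elementary observation about covering numbers, so I do not expect a genuine obstacle in this lemma; the only point needing a little care is that $d_c$ is in general merely a pseudometric (it may vanish off the diagonal), but both Dudley's bound and the covering-number comparison are insensitive to this. The substantive work of the appendix lies rather in the remaining lemmas that verify the hypotheses of this one --- in particular, exhibiting a single metric $d$ on $D$ that dominates all the canonical metrics $d_c$, $c \in \mathcal C$ (e.g.\ for the Mat\'ern family under the constraints on $\sigma$, $\rho$, $\nu$) while still having a finite Dudley integral --- and in combining this with Lemma~\ref{lem:ExpMomGauss} to establish Theorem~\ref{theo:lognormal}.
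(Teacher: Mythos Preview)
Your proposal is correct and matches the paper's approach: the lemma is stated without an explicit proof in the paper, being presented as an immediate consequence of Dudley's entropy bound~\eqref{equ:Dudley_bound} together with the fact that $d_c \le d$ for every $c \in \mathcal C$. Your covering-number comparison is a valid way to fill in this step; an equivalent and slightly shorter route (closer to how the paper sets things up) is to note that $d_c \le d$ implies the increment condition~\eqref{equ:Dudley_cond} holds directly for the single metric $d$, so Dudley's bound may be applied with $d$ itself, bypassing the intermediate bound in $d_c$.
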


We now state a result for subsets of Mat\'ern covariance functions, as introduced in Subsection~\ref{sec:Wasserstein_Sens}, which in combination with the previous two lemmas proves Theorem~\ref{theo:lognormal} in Subsection~\ref{sec:Wasserstein_Sens}.

\begin{lemma} \label{lem:appMatern}
Consider the following class of Mat\'ern covariance functions:
\[
	\mathcal C 
	=
	\mathcal C(\sigma_{\max}, \rho_{\min},k_{\max})
	:= \left\{ c_{\sigma^2,\rho,k+\frac 12}\colon \sigma \leq \sigma_{\max}, \rho \geq \rho_{\min}, k \in \{0,\ldots,k_{\max}\} \right\},
\]
where $\sigma_{\max}<\infty, k_{\max} \in \mathbb N_0$ and $\rho_{\min} > 0$.
Then, we have
\[
	c(x,x) + c(y,y) - 2c(x,y) \leq d^2(x,y) \qquad \forall c \in \mathcal C, 
\]
where
\[
	d^2(x,y)\coloneqq 2\sigma^2_{\max} \left( 1 - \exp\left(- \frac{\sqrt{2k_{\max}+1}}{\rho_{\min}} |x-y|\right) \right),
\]
which is the associated metric $d_{c_\star}$ for $c_\star := c_{\sigma^2_{\max}, \widehat \rho, \frac 12}$ with $\widehat \rho := \frac{\rho_{\min}}{\sqrt{2k_{\max}+1}}$.
Moreover, for any bounded domain $D\subseteq \mathbb R^n$ we have for this metric that
\[
	\int_0^\infty \sqrt{\log \mc N(D,d,r)} \mathrm dr < \infty.
\]
\end{lemma}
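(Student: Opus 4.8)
The plan is to prove the two assertions separately: the pointwise variogram comparison, and then the convergence of the entropy integral for the induced metric $d$.

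For the \emph{variogram bound} I would start from the explicit half-integer representation~\eqref{equ:Matern}. Writing $c_{\sigma^2,\rho,k+\frac12}(x,y) = \sigma^2\,p_k(s)\,\e^{-s}$ with $s := \frac{\sqrt{2k+1}}{\rho}\,|x-y|$ and $p_k(s) := \frac{k!}{(2k)!}\sum_{i=0}^{k}\frac{(k+i)!}{i!\,(k-i)!}\,(2s)^{k-i}$, the key observation is that $p_k$ has nonnegative coefficients and its constant term (the $i=k$ summand) is exactly $1$; hence $p_k(s)\ge 1$ for all $s\ge 0$, so the Mat\'ern correlation satisfies $\e^{-s}\le p_k(s)\,\e^{-s}\le 1$. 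Since $c_{\sigma^2,\rho,k+\frac12}(x,x)=\sigma^2$, the variogram equals $2\sigma^2\bigl(1-p_k(s)\,\e^{-s}\bigr)$, which therefore lies in $\bigl[0,\,2\sigma^2(1-\e^{-s})\bigr]$. Increasing the three monotone quantities --- $\sigma^2\le\sigma_{\max}^2$, and $s\le\frac{\sqrt{2k_{\max}+1}}{\rho_{\min}}|x-y|$ (from $k\le k_{\max}$, $\rho\ge\rho_{\min}$), whence $1-\e^{-s}\le 1-\exp\bigl(-\tfrac{\sqrt{2k_{\max}+1}}{\rho_{\min}}|x-y|\bigr)$ --- yields
\[
    c(x,x)+c(y,y)-2c(x,y)\ \le\ 2\sigma_{\max}^2\Bigl(1-\exp\bigl(-\tfrac{\sqrt{2k_{\max}+1}}{\rho_{\min}}|x-y|\bigr)\Bigr)\ =\ d^2(x,y).
\]
That $d^2$ is $d_{c_\star}^2$ is then immediate: $c_\star=c_{\sigma_{\max}^2,\widehat\rho,\frac12}$ is the exponential covariance $c_\star(x,y)=\sigma_{\max}^2\e^{-|x-y|/\widehat\rho}$ with $1/\widehat\rho=\frac{\sqrt{2k_{\max}+1}}{\rho_{\min}}$, and computing its canonical pseudometric gives precisely the displayed $d^2$.

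For the \emph{entropy integral} I would use that $d$ is of snowflake type. From $1-\e^{-t}\le t$ one obtains $d(x,y)\le c_1\,|x-y|^{1/2}$ on $D$ with $c_1:=\bigl(2\sigma_{\max}^2\,\tfrac{\sqrt{2k_{\max}+1}}{\rho_{\min}}\bigr)^{1/2}$, so every Euclidean ball of radius $\varepsilon$ sits inside a $d$-ball of radius $c_1\varepsilon^{1/2}$; hence a Euclidean $\varepsilon$-net of $D$ is a $d$-net at scale $c_1\varepsilon^{1/2}$, and the elementary estimate $\mc N(D,|\cdot|,\varepsilon)\le C_D\,\varepsilon^{-n}$ for bounded $D\subseteq\bbR^n$ (valid for $\varepsilon$ below the Euclidean diameter) becomes $\mc N(D,d,r)\le C_D'\,r^{-2n}$ for all sufficiently small $r>0$. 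Because $D$ is bounded, the $d$-diameter $R:=\diam_d(D)$ is finite, so $\mc N(D,d,r)=1$ for $r\ge R$ and the integral reduces to $\int_0^R\sqrt{\log\mc N(D,d,r)}\,\dd r$. Splitting at a threshold $r_0\in(0,\min(R,1)]$, using $\sqrt{\log\mc N(D,d,r)}\le\sqrt{\log C_D'}+\sqrt{2n}\,\sqrt{\log(1/r)}$ on $[0,r_0]$ and boundedness of the integrand on the compact interval $[r_0,R]$, convergence follows from $\int_0^1\sqrt{\log(1/r)}\,\dd r<\infty$.

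The only step requiring any real thought is the uniform correlation comparison, i.e.\ $p_k(s)\ge 1$ for every admissible $k$ (equivalently, that each half-integer Mat\'ern correlation dominates the exponential correlation with the extremal parameters); the remaining monotonicity bookkeeping and the covering-number bound for bounded subsets of $\bbR^n$ are routine, and the precise values of $c_1$ and $C_D'$ play no role beyond being finite.
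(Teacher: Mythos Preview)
Your proof is correct. The variogram comparison is argued exactly as in the paper: both reduce to showing that the polynomial prefactor in~\eqref{equ:Matern} is at least~$1$ (which you phrase as $p_k(s)\ge 1$ from nonnegativity of the coefficients and the constant term being~$1$), followed by the obvious monotonicity in $\sigma$, $k$, and $\rho$.

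For the entropy integral you take a more direct route than the paper. The paper works with the \emph{exact} correspondence between $d$-balls and Euclidean balls, namely $B^{\widehat d}_r(x)=B^{|\cdot|}_{\widehat\rho\log((1-r^2)^{-1})}(x)$, and then carries the covering-number bound through this change of radius, eventually reducing the question to the convergence of a discrete series $\sum_{i\ge 2}\sqrt{\log i}\,i^{-3/2}$ via the inequality $1/\log((1-r^2)^{-1})\le 1/r^2$. You instead use only the one-sided comparison $d(x,y)\le c_1|x-y|^{1/2}$ (from $1-\e^{-t}\le t$), which immediately gives $\mc N(D,d,r)\le C'_D r^{-2n}$ and reduces the entropy integral to $\int_0^1\sqrt{\log(1/r)}\,\dd r<\infty$. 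Your argument is shorter and avoids the explicit ball inversion; the paper's approach, in turn, keeps track of the exact relation between the two metrics, though in the end it also discards this precision through the same $1-\e^{-t}\le t$ bound (hidden in the estimate $1/\log((1-r^2)^{-1})\le 1/r^2$). Both approaches ultimately hinge on the polynomial growth of the Euclidean covering numbers of a bounded set in~$\bbR^n$.
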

\begin{proof}
Since $c_{\sigma^2,\rho, k+\frac 12}(x,x) = \sigma^2$, we have
\[
	c_{\sigma^2, \rho, k+\frac12}(x,x) + c_{\sigma^2, \rho, k+\frac12}(y,y) - 2c_{\sigma^2, \rho, k+\frac12}(x,y) = 2\sigma^2 (1 - c_{1, \rho, k+\frac12}(x,y)).
\]
Thus, the first assertion follows by
\[
	c_{1,\rho, k+\frac 12}(x,y)
	\geq 
	\exp\left(- \frac{\sqrt{2k+1}}{\rho} |x-y| \right)
	\geq
	\exp\left(- \frac{\sqrt{2k_{\max}+1}}{\rho_{\min}} |x-y|\right),
\]
where obtain the first inequality by using the explicit expression~\eqref{equ:Matern} and bounding
\begin{align*}
	\frac{k!}{(2k)!} 
	\sum_{i=0}^k \frac{(k+i)!}{i!(k-i)!} \left(2\frac{\sqrt{2k+1}}{\rho} |x-y| \right)^{k-i}
	\geq
	\frac{k!}{(2k)!}  \cdot
	\frac{(k+k)!}{k!(k-k)!} \left(2\frac{\sqrt{2k+1}}{\rho} |x-y| \right)^{k-k}
	= 1.	
\end{align*}
For the second assertion we 
introduce the auxilliary metric
\[
	\widehat d(x,y) :=  \sqrt{1 - \exp\left(- \frac{|x-y|}{\widehat \rho}\right)}
\]
with $\widehat \rho := \frac{\rho_{\min}}{\sqrt{2k_{\max}+1}}$ and notice that $d(x,y) = \sqrt 2 \sigma_{\max} \widehat d(x,y)$ for $x,y\in D$.
Hence, concerning balls we have for any radius $r>0$ that $B^d_r(x) = B^{\widehat d}_{r/\sqrt 2 \sigma_{\max}}(x)$ and, therefore, 
$\mc N(D,d,r) = \mc N(D, \widehat d, r/\sqrt{2} \sigma_{\max})$.
Thus, by a change of variables argument, we get 
\[
	\int_0^\infty \sqrt{\log \mc N(D,d,r)} \mathrm dr
	=
	\sqrt{2} \sigma_{\max} \int_0^\infty \sqrt{\log \mc N(D,\widehat d,r)} \mathrm dr.
\]
By the definition of $\widehat d$ we have that $\widehat d$ is bounded by $1$. 
Since for all $r\geq \diam_{\widehat d}(D)$ with $\diam_{\widehat d}(D) := \sup_{x,y\in D}\widehat d(x,y)$ denoting the diameter of $D$ w.r.t.~$\widehat d$ we have $\mc N(D,\widehat d, r) = 1$ we get
\[
	\int_0^\infty \sqrt{\log \mc N(D,\widehat d,r)}\ \mathrm dr
	=
	\int_0^1 \sqrt{\log \mc N(D,\widehat d,r)}\ \mathrm dr.
\]
Moreover, we obtain for the Euclidean norm $|\cdot|$ (or rather the corresponding distance) in $\mathbb R^n$ that 
\[
	B^{\widehat d}_{r}(x)
	=
	B^{|\cdot|}_{\widehat \rho \log(1/(1 - r^2))}(x)
	\qquad
	\forall r\in[0,1)\
	\forall x\in D,
\]
which yields $\mc N(D, \widehat d, r) = \mc N(D, |\cdot|, \widehat \rho \log((1-r^2)^{-1}))$.
Therefore, we next estimate $\mc N(D, |\cdot|, r)$ for $r\in[0,\infty)$.
We do so by embedding $D$ in a cube $\widehat D$ of edge length $\diam(D) := \sup_{x,y\in D}|x-y|$ and simply use $\mc N(D, |\cdot|, r) \leq \mc N(\widehat D, |\cdot|, r)$.
Now, since in every (Euclidean) ball of radius $r$ in $\mathbb R^n$ we can insert a smaller cube of edge length $a = 2r/\sqrt n$, we can bound the covering number $\mc N(\widehat D, |\cdot|, r)$ by the number of cubes of edge length $a$ covering $\widehat D$, i.e.,
\[
	\mc N(D, |\cdot|, r) 
	\leq 
	\mc N(\widehat D, |\cdot|, r)
	\leq
	\left(\frac{\diam(D)}a \right)^n
	=
	\left(\frac{\sqrt n \diam(D)}{2r} \right)^n.
\]
Thus, we obtain for any $r\in[0,1)$
\begin{align*}
	\mc N(D, \widehat d, r)
	& 
	= \mc N(D, |\cdot|, \widehat \rho \log((1-r^2)^{-1}))
	\leq
	\left\lceil \frac{\sqrt n \diam(D)}{2\widehat \rho \log((1-r^2)^{-1})} \right\rceil^n
	=
	\kappa \left\lceil \frac1{\log((1-r^2)^{-1})}\right\rceil^n
\end{align*}
where we defined $\kappa := \left\lceil \sqrt n \diam(D)/(2\widehat \rho)\right\rceil^n$.
Hence,
\begin{align*}
	\int_0^\infty \sqrt{\log \mc N(D,d,r)} \mathrm dr
	& =
	\sqrt{2} \sigma_{\max}  \int_0^1 \sqrt{\log \mc N(D,\widehat d,r)}\ \mathrm dr\\
	& 
	\leq
	\sqrt{2} \sigma_{\max} \left(\sqrt{\log\kappa} + \sqrt{n} \int_0^1 \sqrt{\log\left( \left\lceil 1/\log( (1-r^2)^{-1}) \right\rceil\right) }\ \mathrm dr \right).
\end{align*}
Thus, it remains to show that $\int_0^1 \sqrt{\log\left( \left\lceil 1/\log( (1-r^2)^{-1}) \right\rceil\right) }\ \mathrm dr < \infty$.
To this end, we use
\[
	\frac 1{\log((1-r^2)^{-1})} \leq \frac 1{r^2}, \qquad r\in (0,1),
\]
which can be verified by a simple calculation. Hence,
\[
	\int_0^1 \sqrt{\log\left( \left\lceil 1/\log( (1-r^2)^{-1}) \right\rceil\right) }\ \mathrm dr
	\leq
	\int_0^1 \sqrt{\log\left( \lceil r^{-2} \rceil \right) }\ \mathrm dr.
\]
Furthermore, we have $\lceil r^{-2} \rceil = 1$ for $r \geq 1$ and 
\[
	\left\lceil r^{-2} \right\rceil
	=
	i
	\quad
	\text{iff}
	\quad
	\frac 1{\sqrt{i}} < r \leq \frac 1{\sqrt{i-1}}
\]
for any $2\leq i \in \mathbb N$.
Therefore,
\begin{align*}
	\int_0^1 \sqrt{\log\left( \lceil r^{-2} \rceil \right) }\ \mathrm dr
	& 
	=
	\sum_{i=2}^\infty
	\sqrt{\log(i)} \left|\frac 1{\sqrt{i-1}} - \frac 1{\sqrt{i}}\right|
	\leq
	\sum_{i=2}^\infty
	\sqrt{\log(i)} \ \frac 1{\sqrt{i^3}}	
	< \infty,
\end{align*}
where the first inequality can be verified again by a straightforward calculation and the convergence of the series follows by a simple comparison arguement.
This concludes the proof. 
\end{proof}

\begin{proof}[Proof of Theorem~\ref{theo:lognormal}]
In order to prove Theorem~\ref{theo:lognormal}, it suffices to show that the class of Gaussian measures specified in Theorem~\ref{theo:lognormal} satisfies the assumption of Lemma~\ref{lem:ExpMomGauss}.
Now, condition $(i)$ of Lemma~\ref{lem:ExpMomGauss} is ensured by the assumptions of Theorem~\ref{theo:lognormal}, since $\|c_{\sigma^2,\rho,k+\frac 12}\|_{C(D\times D)}=\sigma^2$.
Condition $(ii)$ of Lemma~\ref{lem:ExpMomGauss} follows by combining Lemma~\ref{lem:Dudley} and Lemma~\ref{lem:appMatern}.
This concludes the proof.
\end{proof}

\section{Explicit Contruction of (Coupled) Risk Measures} \label{app:Risk}

Let $\pi$ be a coupling with marginals $\prob$ and $\probalt$. 
In what follows we explicitly construct a risk functional $\Risk_\pi$ so that 
\[
   \Risk_\pi(X\circ \mathtt p_1) = \mathcal P_\prob(X)
   \quad \text{ and }\quad 
   \Risk_\pi(X\circ \mathtt p_2) = \mathcal P_\probalt(X).
\]
For ease of exposition we define the measure $Z_1\prob$ ($Z_2\probalt$, resp.\@) with density $Z_1$ ($Z_2$, resp.\@) by 
\[
	Z_1\prob(B):=\int_B Z\,\mathrm dP\qquad (Z_2\probalt(B)
	:=
	\int_B Z_2\,\mathrm d\probalt, \text{ resp.}).
\]

\begin{proposition}
Let the risk functionals $\Risk_\prob$ and $\Risk_\probalt$ have support sets $\mathcal A_\prob$ and $\mathcal A_\probalt$. 
Then the set
\[
	\mc A_\pi
	:=
	\left\{
	\vZ = \frac{\mathrm d \gamma}{\mathrm d \mathrm \pi}
	: 
	\gamma\in \Pi(Z_1\prob,\, Z_2\probalt),\ Z_1 \in \mc A_\prob,\ Z_2 \in \mc A_\probalt 
	\right\}
\]
is the support set of a risk functional $\Risk_\pi$ satisfying $\Risk_\pi(X\circ \mathtt p_1) = \Risk_\prob(X)$ and $\Risk_\pi(X\circ \mathtt p_2) = \Risk_\probalt(X)$.
\end{proposition}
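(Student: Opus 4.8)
The plan is to verify that the displayed $\mc A_\pi$ is a nonempty set of $\pi$\nobreakdash-densities---hence, by~\eqref{eq:6}, the support set of a coherent risk functional $\Risk_\pi$---and then to establish the two marginal identities, with ``$\le$'' immediate and ``$\ge$'' reduced to exhibiting a suitable coupling. Throughout I take $\Risk_\prob$ and $\Risk_\probalt$ to be one and the same law-invariant coherent risk functional $\Risk$ evaluated with respect to $\prob$ and $\probalt$, respectively (as is the case for every functional of Section~\ref{sec:Risk}); some compatibility of this kind is genuinely needed, for if $\pi=(\mathrm{id},\mathrm{id})_*\prob$ is the diagonal coupling on $\{\prob=\probalt\}$, then every $\gamma\ll\pi$ is supported on the diagonal and has equal marginals, so $Z_1=Z_2$ $\prob$\nobreakdash-a.s.\ is forced and $\mc A_\pi$ would be empty whenever $\mc A_\prob\cap\mc A_\probalt=\emptyset$.

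The computational backbone is the change-of-variables identity: for $\vZ=\mathrm d\gamma/\mathrm d\pi$ with $\gamma\in\Pi(Z_1\prob,Z_2\probalt)$, $Z_1\in\mc A_\prob$, $Z_2\in\mc A_\probalt$, and measurable $X\colon\mcX\to\mathbb R$ in the relevant domain,
\[
	\evm{\pi}{(X\circ\mathtt p_1)\,\vZ}=\evm{\prob}{X\,Z_1},
	\qquad
	\evm{\pi}{(X\circ\mathtt p_2)\,\vZ}=\evm{\probalt}{X\,Z_2},
\]
which hold because $\mathrm d\gamma=\vZ\,\mathrm d\pi$ has first marginal $Z_1\prob$ and second marginal $Z_2\probalt$; equivalently $\evm{\pi}{\vZ\mid\mathtt p_1=\cdot}=Z_1$ and $\evm{\pi}{\vZ\mid\mathtt p_2=\cdot}=Z_2$. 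Since $Z_1\prob,Z_2\probalt$ are probability measures, $\vZ\ge0$ $\pi$\nobreakdash-a.s.\ and $\evm{\pi}{\vZ}=\gamma(\mcX\times\mcX)=1$, so $\mc A_\pi$ consists of densities. Taking the supremum over $\vZ\in\mc A_\pi$ in the identities and using $Z_1\in\mc A_\prob$, $Z_2\in\mc A_\probalt$ yields at once $\Risk_\pi(X\circ\mathtt p_1)\le\Risk_\prob(X)$ and $\Risk_\pi(X\circ\mathtt p_2)\le\Risk_\probalt(X)$.

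For the reverse inequalities I would, given an arbitrary $Z_1\in\mc A_\prob$, set $\mathrm d\gamma:=(Z_1\circ\mathtt p_1)\,\mathrm d\pi$. This is a probability measure with $\gamma\ll\pi$; its first marginal is $Z_1\prob$ (because $\pi$ has first marginal $\prob$), and its second marginal is $Z_2\probalt$ with $Z_2:=\evm{\pi}{Z_1\circ\mathtt p_1\mid\mathtt p_2=\cdot}\ge0$, $\evm{\probalt}{Z_2}=\evm{\prob}{Z_1}=1$. Granting that $Z_2\in\mc A_\probalt$ (the main obstacle, treated below), we get $\gamma\in\Pi(Z_1\prob,Z_2\probalt)$ and hence $\vZ:=Z_1\circ\mathtt p_1\in\mc A_\pi$; the backbone identity then gives $\Risk_\pi(X\circ\mathtt p_1)\ge\evm{\pi}{(X\circ\mathtt p_1)(Z_1\circ\mathtt p_1)}=\evm{\prob}{X\,Z_1}$, and taking the supremum over $Z_1\in\mc A_\prob$ yields $\Risk_\pi(X\circ\mathtt p_1)\ge\Risk_\prob(X)$, so equality. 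This construction also shows $\mc A_\pi\neq\emptyset$. The identity for $\Risk_\probalt$ follows from the symmetric choice $\mathrm d\gamma:=(Z_2\circ\mathtt p_2)\,\mathrm d\pi$ with $Z_2\in\mc A_\probalt$.

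The crux is therefore to show that $Z_2=\evm{\pi}{Z_1\circ\mathtt p_1\mid\mathtt p_2=\cdot}\in\mc A_\probalt$ for $Z_1\in\mc A_\prob$, i.e.\ stability of the support set under ``conditioning through $\pi$''. The argument I would use: viewed on $(\mcX\times\mcX,\pi)$, the density $Z_1\circ\mathtt p_1$ has the same law under $\pi$ as $Z_1$ under $\prob$, hence lies in the support set of $\Risk$ on the product space; its $\sigma(\mathtt p_2)$\nobreakdash-conditional expectation remains in that support set by \emph{dilatation monotonicity} of law-invariant coherent risk functionals (equivalently, $\Risk(\evm{}{\,\cdot\mid\mc G})\le\Risk(\cdot)$), and it equals $Z_2\circ\mathtt p_2$ with $Z_2$ of the same law under $\probalt$; since membership in the support set of a law-invariant coherent risk functional depends only on the law of the density, $Z_2\in\mc A_\probalt$. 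For the concrete functionals of Section~\ref{sec:Risk} one may instead verify this by hand---for $\AVaR_\alpha$, $0\le Z_1\le\tfrac1{1-\alpha}$ $\prob$\nobreakdash-a.s.\ propagates to $0\le Z_1\circ\mathtt p_1\le\tfrac1{1-\alpha}$ $\pi$\nobreakdash-a.s.\ and hence $0\le Z_2\le\tfrac1{1-\alpha}$ $\probalt$\nobreakdash-a.s.; for spectral functionals one combines~\eqref{eq:Spectral} with monotonicity of $\AVaR$ under conditioning; for $\EVaR_\alpha$ one uses convexity of $z\mapsto z\log z$ and Jensen's inequality.
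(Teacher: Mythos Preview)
Your approach coincides with the paper's: both establish the change-of-variables identity $\evm{\pi}{(X\circ\mathtt p_1)\,\vZ}=\evm{\prob}{X\,Z_1}$ to obtain ``$\le$'', and both take $\vZ=Z_1\circ\mathtt p_1$ for ``$\ge$''. The difference is in how the membership $Z_1\circ\mathtt p_1\in\mc A_\pi$ is justified. The paper simply asserts that for this choice the second marginal density is $Z_2\equiv1\in\mc A_\probalt$; as you correctly point out, the second marginal density of $\gamma=(Z_1\circ\mathtt p_1)\,\pi$ is in fact $\evm{\pi}{Z_1\circ\mathtt p_1\mid\mathtt p_2=\cdot}$, which equals~$1$ only for special couplings (e.g.\ the independent one $\pi=\prob\otimes\probalt$). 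Your diagonal-coupling observation shows that without some compatibility hypothesis on $\mc A_\prob$ and $\mc A_\probalt$ the set $\mc A_\pi$ may even be empty, so the paper's one-line justification does not cover general $\pi$.

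Your remedy---assuming a common law-invariant coherent $\Risk$ and invoking dilatation monotonicity to conclude that the conditional expectation $Z_2=\evm{\pi}{Z_1\circ\mathtt p_1\mid\mathtt p_2=\cdot}$ remains in the support set---is sound, and the direct verifications for $\AVaR_\alpha$, spectral functionals and $\EVaR_\alpha$ are correct. In short, you follow the paper's strategy but supply a rigorous justification at the one step the paper glosses over, at the cost of the law-invariance hypothesis that is anyway satisfied by every example in Section~\ref{sec:Risk}.
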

\begin{proof}
Let $\vZ \in \mc A_\pi$ be arbitrary and denote by $Z_1 \in \mc A_\prob$ and $Z_2\in\mathcal A_\probalt$ the two random variables such that $\vZ\pi \in \prod(Z_1\prob, Z_2\probalt)$, where $\vZ\pi(A\times B):=\iint_{A\times B} \vZ(x_1,x_2)\,\pi(\mathrm dx_1,\,\mathrm dx_2)$ is defined in analogy to $Z_1\prob$ and $Z_2\probalt$.
We then have
\begin{align*}
	\evm{\pi}{\vZ (X\circ \mathtt p_1)}
	&=
	\int_{\mcX\times\mcX} (X\circ \mathtt p_1)(x_1,x_2)\,(\vZ\pi)(\mathrm dx_1 ,\,\mathrm d x_2)
	=
	\int_{\mcX} X(x_1)\, Z_1\prob(\mathrm d x_1) =\evm{\prob}{Z_1X}.
\end{align*}
	
Further, for any $Z_1 \in \mc A_\prob$ it holds that $\vZ(x_1,x_2):=Z_1(x)\in\mathcal A_\pi$, as $Z_2(x_2)=1\in\mathcal A_\probalt$. 
We thus obtain that
\[
	\Risk_\pi(X\circ \mathtt p_1)
	=
	\sup_{\vZ \in \mc A_\pi} \evm{\pi}{\vZ\cdot(X\circ \mathtt p_1)}
	=
	\sup_{{\substack{Z_1 \in \mc A_1}}} \evm{\prob}{Z_1X}
	=
	\Risk_\prob(X).
\]
The statement for $\Risk_\probalt$ is proven analogously.
\end{proof}

\nocite{RockafellarUryasev2013}
\bibliographystyle{siamplain}	
\bibliography{LiteraturAlois, literature}

\end{document}